\newtheorem{lemma}{Lemma}
\newtheorem{claim}{Claim}
\newtheorem{remark}{Remark}
\newtheorem{proposition}{Proposition}
\newtheorem{theorem}{Theorem}
\newtheorem{conjecture}[lemma]{Conjecture}
\newtheorem*{ackno}{Acknowledgement}
\newcommand{\EE}{{\mathbf{E}}}
\newcommand{\bbS}{{\mathbb{S}}}
\newcommand{\dd}{\,{\rm d}}
\newcommand{\dist}{\,{\rm dist}}
\newcommand{\PP}{\mathbf{P}}
\newcommand{\Lip}{\mathrm{Lip}}
\newcommand{\Z}{\mathbb {Z}}
\newcommand{\N}{\mathbb {N}}
\newcommand{\R}{\mathbb {R}}
\newcommand{\bA}{A^c}
\newcommand{\cF}{\mathcal {F}}
\newcommand{\cB}{\mathcal {B}}
\newcommand{\cN}{\mathcal {N}}
\newcommand{\cE}{\mathcal {E}}
\newcommand{\cG}{\mathcal {G}}
\newcommand{\cP}{\mathcal {P}}
\newcommand{\bS}{{\bf S}}
\newcommand{\cH}{\mathcal{H}}
\newcommand{\ent}{{\mathrm{Ent}}}
\newcommand{\var}{{\mathrm{Var}}}
\newcommand{\cov}{{\mathrm{Cov}}}
\title{Entropy factorization via  curvature}
\author{Pietro Caputo and Justin Salez}
\begin{document}

\maketitle 
\begin{abstract}
We develop a new framework for establishing approximate factorization of entropy on arbitrary probability spaces, using a geometric notion known as non-negative sectional curvature. The resulting estimates are equivalent to entropy subadditivity and generalized Brascamp-Lieb inequalities, and provide a sharp modified log-Sobolev inequality for the Gibbs sampler of several particle systems in both continuous and discrete settings. 
The method allows us to obtain simple proofs of known results, as well as some new inequalities. We illustrate this through various applications, including discrete Gaussian free fields on arbitrary networks, the down-up walk on uniform $n$-sets, the uniform measure over permutations, and the uniform measure on the unit sphere in $\R^n$. Our method also yields a simple, 
coupling-based proof of the celebrated logarithmic Sobolev inequality for  Langevin diffusions in a convex potential, which is one of the most emblematic applications of the Bakry-\'Emery criterion. 
\end{abstract}


\tableofcontents

\section{Introduction}
\subsection{Motivation}
In this motivational section, we consider a product probability measure $\PP$ on a $n-$dimensional measurable space $(\Omega,\cF)=\bigotimes_{i=1}^n(\Omega_i,\cF_i)$.
\label{sec:motivation}
\paragraph{Variance tensorization.} We write $\EE[f]$ and $\var(f)$ for the expectation and variance of a random variable $f\in L^2(\Omega,\cF,\PP)$. For  $i\in[n]$, we let $Z_i\colon \Omega\to \Omega_i$ denote the projection onto the $i-$th coordinate, and we define $\EE_i[f]:=\EE[f|Z_1,\ldots,Z_{i-1},Z_{i+1},\ldots,Z_n]$ and $\var_i(f):=\EE_i[f^2]-\EE^2_i[f]$. With this notation, the celebrated Efron-Stein inequality asserts that 
\begin{eqnarray}
\label{eq:VT}
\forall f\in L^2,\qquad \var\left(f\right) & \le & \sum_{i=1}^n\EE\left[\var_i(f)\right].
\end{eqnarray}
This property is the starting point of the well-developed theory of concentration of Lipschitz observables on product spaces, as exposed in the comprehensive textbook \cite{MR3185193} or the beautiful lecture notes \cite{van2014probability}. From a dynamical viewpoint, it can also be regarded as a dimension-free Poincar\'e inequality for the Gibbs sampler associated with $\PP$. 

\paragraph{Entropy tensorization.}
A far-reaching observation  is that a similar relation holds between the entropy  $\ent(f):=\EE[f \log f]-\EE[f]\log\EE[f]$ and its conditional versions $\ent_i(f):=\EE_i[f \log f]-\EE_i[f]\log\EE_i[f]$, $i\in[n]$. Specifically, we have 
\begin{eqnarray}
\label{eq:ET}
\forall f\in L\log L,\qquad \ent\left(f\right) & \le & \sum_{i=1}^n\EE\left[\ent_i(f)\right],
\end{eqnarray} where $L\log L$ denotes the set of measurable functions $f\colon\Omega\to\R_+$ such that $\EE[|f\log f|]<\infty$.  The functional inequality (\ref{eq:ET}) implies (\ref{eq:VT}) by a classical perturbation argument around constant functions. It admits several notable consequences, including a sub-Gaussian concentration estimate for Lipschitz observables, and a dimension-free modified log-Sobolev inequality for the Gibbs sampler associated with $\PP$. We again refer the  reader to  \cite{MR3185193,van2014probability} for details. In fact, this entropy tensorization  happens to be a special case of an even more general inequality due to Shearer, which we refer to as \emph{block factorization} of entropy.

\paragraph{Block factorization.} Given a set $A\subset[n]$,   we write $\EE_A[\cdot]:=\EE[\cdot|Z_j\colon j\in \bA]$ for the conditional expectation given all coordinates in $\bA:=[n]\setminus A$, and $\ent_A[f]:=\EE_A[f\log f]-\EE_A[f]\log\EE_A[f]$ for the corresponding conditional entropy. With this notation at hand, Shearer's inequality (see, e.g., \cite{MR2683430}) asserts that 
\begin{eqnarray}
\label{eq:BT}
\forall f\in L\log L,\qquad \theta_\star\,\ent\left(f\right) & \le & \sum_{A\subset[n]}\theta_A\,\EE\left[\ent_A(f)\right],
\end{eqnarray}
for any non-negative weights $(\theta_A)_{A\subset[n]}$, where $\theta_\star:=\min_{1\le i\le n}\sum_{ A\ni i}\theta_A$ is the smallest marginal weight. This  functional  inequality considerably generalizes the entropy tensorization (\ref{eq:ET}), which corresponds to the choice $\theta_A={\bf 1}_{|A|=1}$. It implies a dimension-free modified log-Sobolev inequality for the \emph{Block Dynamics} on $(\Omega,\cF,\PP)$ that consists in re-sampling any block of coordinates $(Z_i\colon i\in A)$  at rate $\theta_A$ according to its conditional law given $(Z_j\colon j\in \bA)$. 

\paragraph{Approximate block factorization.} The concept of entropy factorization, and its use in the analysis of log-Sobolev inequalities go back to classical work on the Glauber dynamics for non-product measures such as lattice spin systems \cite{Lu-Yau,SZ92a,MR1746301,dai2002entropy,MR3413687}. This point of view was then revisited in \cite{MR2995699,MR3434252,MR4015662}.
Motivated by the powerful consequences of Shearer's inequality in probability, combinatorics, and functional analysis, \cite{MR4334247,MR4415182,BristielC}  have recently investigated more systematically the possibility of establishing  approximate versions of  (\ref{eq:BT}) that apply to non-product measures.  More precisely, given a non-negative vector $\theta=(\theta_A)_{A\subset[n]}$ and an arbitrary probability measure $\PP$ on an $n-$dimensional measurable space $(\Omega,\cF)=\bigotimes_{i=1}^n(\Omega_i,\cF_i)$, one looks for a constant $\kappa=\kappa(\PP,\theta)>0$, as large as possible, such that 
\begin{eqnarray}
\label{eq:ABT}
\forall f\in L\log L,\qquad \kappa \,\ent\left(f\right) & \le & \sum_{A\subset[n]}\theta_A\,\EE\left[\ent_A(f)\right].
\end{eqnarray}
As in the case of Shearer's inequality, one important motivation for studying approximate block factorizations is the fact that the constant $\kappa$ in \eqref{eq:ABT} provides a lower bound on the modified log-Sobolev constant for the block dynamics associated with the weights $\theta$.  We refer the reader to  the recent lecture notes \cite{entropy} for a self-contained introduction to this active line of research. 

\paragraph{Entropy subadditivity and Brascamp-Lieb inequalities.}
Another important motivation stems from the equivalence between block factorizations, entropy subadditivity and the Brascamp-Lieb inequalities.
Indeed, using the classical
chain rule for entropy, for any sub $\sigma$-algebra $\cG\subset \cF$ we write
\begin{eqnarray}
\label{def:chain}
\forall f\in L\log L,\qquad \ent(f) & = & \EE[\ent(f |{\cal G})]+\ent(\EE[f |{\cal G}]),
\end{eqnarray}
where $\EE\left[\ent( f|\cG)\right]:=\EE\left[f\log(f/\EE( f|\cG))\right]$ is the averaged conditional entropy of $f$ with respect to $\cG$. 
Assuming (without  loss of generality) that the weights $(\theta_A)_{A\subset[n]}$ are normalized so as to sum to $1$, we can rewrite the approximate entropy factorization  \eqref{eq:ABT} equivalently as an \emph{entropy subadditivity} statement of the form:
\begin{eqnarray}
\label{standardized}
\forall f\in L\log L,\qquad\sum_{A\subset[n]}\theta_A\,\ent\left(\EE_A[f]\right) & \le & (1-\kappa)\ent(f).
\end{eqnarray}
On the other hand, it is well known  \cite{carlen2009subadditivity}  that by Legendre duality,  the subadditivity \eqref{standardized} is equivalent to the Brascamp-Lieb inequality
\begin{align}\label{eq:BL1}
 \EE\left[
  \textstyle{\prod_{A\subset [n]}} 
 g_A(Z_{\bA})^{c_A}\right] \leq  \textstyle{\prod_{A\subset [n]}}
 \EE\left[g_A(Z_{\bA})\right]^{c_A}\,,
\end{align}
for all bounded measurable functions $g_A:\R^{|\bA|}\mapsto\R_+$, where, for all $A\subset [n]$, $c_A:=\frac{\theta_{A}}{1-\kappa}$, and $Z_A$ denotes the set of variables
$(Z_i, i\in A)$. We note that if $f$ is a probability density with respect to $\PP$, so that $f\PP$ defines a probability measure  on $\Omega$, then $\EE_A[f]$ is the density of the pushforward of $f\PP$ under the projection $Z\mapsto Z_{\bA}$  (the marginal of $f\PP$ on $Z_{\bA}$). In fact, the equivalence 
between \eqref{eq:BL1} and \eqref{eq:ABT} applies more generally when the projection $Z\mapsto Z_{A^c}$ is replaced by 
an arbitrary measurable map $Z\mapsto B_A (Z)$, and $\EE_A[f]$ is replaced by the conditional expectation $\EE[f|B_A(Z)]$; see \cite[Theorem 2.1]{carlen2009subadditivity}. 
The inequalities \eqref{eq:BL1} obtained in this way are referred to as Brascamp-Lieb (B-L) type inequality, in analogy with the classical B-L inequality, which corresponds to the  setting where $\PP$ is the Lebesgue measure on $\R^n$ and the measurable maps $B_A$ are linear; see \cite{brascamp1976best,lieb1990gaussian,barthe1998reverse,bennett2008brascamp,lehec2014short}. 

\paragraph{Our contribution.} 
We adopt 
a more general viewpoint on the inequalities \eqref{eq:BL1} and \eqref{eq:ABT}, by replacing conditional expectations $\EE_A[f]$
with arbitrary, not necessarily self-adjoint, Markov operators $f\mapsto T_A f$. It turns out that it is not hard to extend the duality to the new framework, thus generalizing the equivalence proved in  \cite{carlen2009subadditivity}; see Section \ref{sec:general}. 

Within this general framework,  we propose a new approach towards establishing the approximate entropy factorization  (\ref{eq:ABT}), based on \emph{non-negative sectional curvature}. This simple probabilistic and geometric notion, an $L^\infty$ version of Ollivier's Ricci curvature \cite{ollivier2009ricci},  has recently been shown to play a fundamental role in quantifying the entropy dissipation rate of Markov processes \cite{pedrotti2023contractive,munch2023ollivier,caputo2024entropy}. In fact, our main result, stated in  Section \ref{sec:main} below, is  sufficiently general  to also contain the main finding of \cite{caputo2024entropy}.
We demonstrate the robustness of our method through various applications in both continuous and discrete settings.
For instance, the method enables us to derive optimal entropy factorizations for Gaussian free fields on arbitrary networks, providing optimal bounds on the spectral gap and relative entropy decay of any weighted block dynamics. We also discuss analogous bounds for the uniform measure over permutations and over the unit sphere in $\R^n$. Although obtaining optimal constants for these models with any choice of weights is significantly challenging, our method achieves bounds that match the state-of-the-art.
Additionally, we apply our method to the down-up walk for uniform 
$n$-sets, a special case of the well-studied down-up walk on the bases of a matroid. Recent studies have obtained remarkable entropy contraction bounds for these processes using properties of log-concave polynomials (see \cite{cryan2021modified, anari2022entropic}). In the special case of uniform $n$-sets, our method produces a slightly better bound than that derived from log-concavity.
Finally, we highlight the versatility of our method by providing a straightforward coupling proof of the well known Bakry-\'Emery criterion for Langevin diffusions in a convex potential.

\subsection{General framework}\label{sec:general}
\label{sec:main}
We consider a generalized version of the inequality 
\eqref{standardized} where 
the conditional expectations are replaced with  arbitrary Markov operators. 
\paragraph{Markov operators. }From  now on, we consider an arbitrary  probability space $(\Omega,\cF,\PP)$, and we simply write $L^p$ for $L^p(\Omega,\cF,\PP)$, $p\in[1,\infty]$. We recall that a \emph{transition kernel} on $\Omega$ is  a function $T\colon \Omega\times\cF\to [0,1]$ such that
\begin{enumerate}[(i)]
\item for each  point $x\in\Omega$, the function $E\mapsto T(x,E)$ is a probability measure;
\item for each event $E\in\cF$, the function $x\mapsto T(x,E)$ is measurable. 
\end{enumerate}
Integrating with respect to such a kernel naturally gives rise to a bounded linear operator on  $L^\infty$, which is also denoted by $T$, and whose action is as follows:
\begin{eqnarray}
\label{def:Tf}
\forall x\in\Omega, \qquad (Tf)(x) & := & \int_\Omega f(y)\,T(x,{\rm d} y). 
\end{eqnarray}
Of course, $Tf$ is non-negative as soon as  $f$ is, and   $T{\bf 1}={\bf 1}$: such operators are usually known as \emph{Markov operators}. Note that the underlying transition kernel can be recovered from the operator via the formula $T(x,E)=(T{\bf 1}_E)(x)$, so that the above identification is legit.  We will always assume that $T$ is \emph{measure preserving}, in the sense that for all $f\in L^\infty$,
\begin{eqnarray}
\label{def:statio}
 \EE[Tf] & = & \EE[f],
\end{eqnarray}
where we recall that $\EE[\cdot]$ denotes the expectation w.r.t. $\PP$. Thanks to this stationarity and Jensen's inequality, the formula (\ref{def:Tf}) actually defines a bounded linear operator $T\colon L^p\to L^p$ with operator norm $1$,  for any $p\in[1,\infty]$. Its adjoint $T^\star$ is also a measure-preserving Markov operator, characterized by the relation
\begin{eqnarray}
\forall f,g\in L^2,\qquad \EE[(T^\star f)g] & = & \EE[(T g)f].
\end{eqnarray}
As above, we write $f\in L\log L$ to mean that $f\colon\Omega\to\R_+$ is measurable with $\EE[|f\log f|]<\infty$. Note that $Tf$ is then well-defined and in $L\log L$, with
\begin{eqnarray*}
\ent(Tf) & \le & \ent(f).
\end{eqnarray*}
We are now in position to introduce  the general  inequality that we propose to investigate. 

\paragraph{General functional inequality.}\label{sec:generalf} For an integer $M\ge 1$, we assume to be given a  family $(T_1,\ldots,T_M)$ of measure-preserving transition kernels  on our workspace $(\Omega,\cF,\PP)$, identified with the corresponding Markov operators, along with a probability vector $(\theta_1,\ldots,\theta_M)$. Our aim is to find a constant $\kappa\in[0,1]$, as large as possible, such that 
\begin{eqnarray}
\label{eq:main}
\forall f\in L\log L,\qquad \sum_{i=1}^M \theta_i\,\ent(T_i^\star f) & \le & (1-\kappa)\ent(f).
\end{eqnarray}
Since, for any $\sigma-$field $\cG\subset\cF$, the conditional expectation $f\mapsto\EE[f|\cal G]$ is a measure-preserving self-adjoint Markov operator, the inequality \eqref{eq:main} contains \eqref{standardized} as a special case. Also  note that, by the convexity of the functional $f\mapsto \ent(f)$, the inequality (\ref{eq:main})  implies that the average Markov operator $T:=\sum_{i=1}^M\theta _i T_i$ satisfies the $1$-step entropy contraction property
\begin{eqnarray}
\forall f\in L\log L,\qquad \ent(T^\star f) & \le & (1-\kappa)\ent(f),
\end{eqnarray}
which we can iterate to deduce exponential convergence to equilibrium, in relative entropy, at rate $\kappa$, for the  Markov chain with transition kernel $T$. In light of those two observations, it seems of considerable interest to find  simple and broadly applicable criteria that guarantee  (\ref{eq:main}) with an explicit (and, ideally, sharp) constant $\kappa$. This is precisely the aim of the present paper.


\paragraph{Duality and generalized B-L inequalities.}
Before presenting our main result, we observe that the  equivalence 
between \eqref{eq:BL1} and \eqref{eq:ABT} established in \cite{carlen2009subadditivity} admits the following generalization in our setting.   
\begin{theorem}[Duality]
\label{th:dual}
Consider  Markov operators $(T_1,\dots,T_M)$,   a constant $\kappa\in(0,1)$, and a probability vector $(\theta_1,\dots,\theta_M)$,  and define 
\[
c_i = \frac{\theta_i}{1-\kappa}\,,\qquad i=1,\dots,M.
\] 
The following statements are equivalent. 
\begin{enumerate}[(a)]
\item Entropy subadditivity:
\begin{eqnarray}
\label{eq:subaddi}
\forall f\in L\log L,\qquad \sum_{i=1}^M \theta_i\,\ent(T_i^\star f) & \le & (1-\kappa) \ent(f)\,.
\end{eqnarray}
\item Generalized B-L inequality: 
\begin{eqnarray}\label{eq:genBL}
\forall \varphi_1,\dots,\varphi_M\in L^\infty,\qquad 
\EE\left[\prod_{i=1}^Me^{c_i T_i \varphi_i}\right] & \le & \prod_{i=1}^M\EE\left[e^{ \varphi_i}\right]^{c_i} \,.
\end{eqnarray}
\end{enumerate}
\end{theorem}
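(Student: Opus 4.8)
The plan is to obtain both implications by Legendre duality, starting from the Donsker--Varadhan variational characterization of entropy and the adjointness relation $\EE[(T_i^\star f)\varphi]=\EE[f\,T_i\varphi]$. Recall that for any probability density $h$ (meaning $h\ge 0$, $\EE[h]=1$) and any $g\in L^\infty$ one has
\begin{eqnarray*}
\EE[hg]-\log\EE[e^{g}] & \le & \ent(h),\\
\ent(h) & = & \sup_{g\in L^\infty}\bigl\{\EE[hg]-\log\EE[e^{g}]\bigr\},
\end{eqnarray*}
the first line being Jensen's inequality (equality iff $h\propto e^{g}$) and the second its saturation. Since $\ent(\lambda f)=\lambda\,\ent(f)$ for $\lambda>0$ and $\EE[T_i^\star f]=\EE[f]$, both sides of \eqref{eq:subaddi} are $1$-homogeneous in $f$, so I may assume $f$ is a probability density; and if $\ent(f)=\infty$ there is nothing to prove, so I may further assume $\ent(f)<\infty$, whence $\ent(T_i^\star f)\le\ent(f)<\infty$ by the contraction property recalled just before the theorem.

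First I would prove (b) $\Rightarrow$ (a). Given a probability density $f$, apply the variational formula to each $\ent(T_i^\star f)$ with a test function $\psi_i\in L^\infty$, use $\EE[(T_i^\star f)\psi_i]=\EE[f\,T_i\psi_i]$ and $\theta_i=(1-\kappa)c_i$, and optimize each $\psi_i$ independently to write
\[
\sum_{i=1}^M\theta_i\,\ent(T_i^\star f)=\sup_{\psi_1,\dots,\psi_M\in L^\infty}\Bigl\{(1-\kappa)\EE\bigl[f\,\textstyle\sum_i c_iT_i\psi_i\bigr]-(1-\kappa)\textstyle\sum_i c_i\log\EE[e^{\psi_i}]\Bigr\}.
\]
Bounding the first expectation by Jensen, $\EE[fg]\le\ent(f)+\log\EE[e^{g}]$ with $g=\sum_i c_iT_i\psi_i$, and then invoking \eqref{eq:genBL} with $\varphi_i=\psi_i$ (so that $\EE[e^{g}]=\EE[\prod_i e^{c_iT_i\psi_i}]\le\prod_i\EE[e^{\psi_i}]^{c_i}$) shows that the bracket is at most $(1-\kappa)\ent(f)$ for every choice of the $\psi_i$; taking the supremum yields \eqref{eq:subaddi}.

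For the converse (a) $\Rightarrow$ (b), fix $\varphi_1,\dots,\varphi_M\in L^\infty$, set $g:=\sum_i c_iT_i\varphi_i\in L^\infty$, and apply \eqref{eq:subaddi} to the tilted probability density $f:=e^{g}/\EE[e^{g}]$, for which $\ent(f)=\EE[fg]-\log\EE[e^{g}]$ holds with equality. On the left-hand side, bound each $\ent(T_i^\star f)$ from below by Jensen with the test function $\varphi_i$ and use $\EE[(T_i^\star f)\varphi_i]=\EE[f\,T_i\varphi_i]$, so that $\sum_i\theta_i\EE[(T_i^\star f)\varphi_i]=(1-\kappa)\EE[fg]$. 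The term $(1-\kappa)\EE[fg]$ then cancels against the corresponding term on the right, and dividing by $1-\kappa>0$ leaves exactly $\log\EE[e^{g}]\le\sum_i c_i\log\EE[e^{\varphi_i}]$, which is \eqref{eq:genBL}.

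I do not expect a genuine obstacle: this is a clean duality computation, structurally identical to the self-adjoint case treated in \cite{carlen2009subadditivity}, the only new ingredient being that $T_i^\star$ need not be a conditional expectation. The points requiring care are bookkeeping ones — that every expectation above is well defined when $f$ merely lies in $L\log L$ (ensured by the reductions to $\ent(f)<\infty$ on one side and to a bounded tilted density on the other), and that the adjoint identity $\EE[(T_i^\star f)\varphi]=\EE[f\,T_i\varphi]$, stated a priori on $L^2$, extends to $f\in L^1$ and $\varphi\in L^\infty$; the latter follows from $T_i$ acting boundedly on $L^\infty$ and $T_i^\star$ on $L^1$, which is exactly what the stationarity assumption \eqref{def:statio} guarantees.
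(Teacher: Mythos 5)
Your proof is correct and follows essentially the same route as the paper: both directions are the Legendre duality coming from the Donsker--Varadhan variational formula, combined with the adjointness identity $\EE[(T_i^\star f)\varphi]=\EE[f\,T_i\varphi]$ and, for (a) $\Rightarrow$ (b), the tilted density $f=e^{g}/\EE[e^{g}]$ with $g=\sum_i c_iT_i\varphi_i$. The only (cosmetic) difference is in (b) $\Rightarrow$ (a), where you keep the supremum over bounded test functions $\psi_i$ while the paper plugs in the specific optimizer $\psi_i=\log(T_i^\star f)$; your variant even sidesteps the minor issue of that optimizer not being in $L^\infty$.
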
 
We remark that when the Markov operators $T_i$ are conditional expectations of the form $T_if = \EE[f|\cF_i]$ for some sub $\sigma$-algebras $\cF_i$, Theorem \ref{th:dual} yields 
the duality proved in \cite[Theorem 2.1]{carlen2009subadditivity}.
In that case  $T_i$ is an orthogonal projection in $L^2$, and therefore $T_i$ is self-adjoint, and by the chain rule \eqref{def:chain} 
one has that both statements (1) and (2) are equivalent to an entropy factorization statement of the form
\begin{eqnarray}
\label{eq:subaddio}
\forall f\in L\log L,\qquad \sum_{i=1}^M \theta_i\,\EE\left[\ent( f|\cF_i)\right] & \ge & \kappa\,\ent(f)\,.
\end{eqnarray}
We note that the case where $T_i$ is not a conditional expectation goes beyond the common B-L inequality setup.
The proof, however,  boils down to essentially the same argument as in 
\cite{carlen2009subadditivity}, which is based on the well known variational principle for relative entropy stating that
\begin{align}\label{entfvarprin}
\ent (f) \;=\; \sup_{h} \;\left\{\EE[fh] - \log\EE[e^h]\right\} ,
\end{align}
where $h$ ranges over all functions on $\Omega$ such that $e^h\in L^1$. 
\begin{proof}[Proof of Theorem \ref{th:dual}]
Suppose (b) holds. Let $f\in L\log L$ and set $h = \sum_{i=1}^Mc_iT_i\log (T_i^\star f)$. Without loss of generality we may assume that $f\in L^\infty$ and   $\EE(f)=1$. 
From the variational principle \eqref{entfvarprin}, 
one has
\begin{align}\label{equiva1}
\ent f \geq \EE[fh] - \log\EE[e^h] = 
\sum_ic_i\EE\left[f T_i\log (T_i^\star f)\right] -\log\EE\left[\prod_{i}
e^{c_iT_i\varphi_i}
\right]\,,
\end{align}
where we define $\varphi_i := \log (T_i^\star f)$. From the assumption (b) it follows that 
\[
\log\EE\left[\prod_{i}
e^{c_iT_i\varphi_i}
\right]\leq \sum_i c_i \log \EE\left[
e^{\varphi_i}
\right].
\]
Since 
$\EE\left[
e^{\varphi_i}
\right]=\EE\left[T_i^\star f
\right] = 1$, 
using also $\EE\left[f T_i\log (T_i^\star f)\right]=\ent(T_i^\star f)$, \eqref{equiva1} implies
\begin{align}\label{equiva1a}
\ent f \geq
\sum_ic_i\,\ent(T_i^\star f)
\,.
\end{align}
This proves (b) $\implies $ (a). To prove the converse, take arbitrary functions $\varphi_i\in L^\infty$, and set $h = \sum_{i=1}^Mc_iT_i\varphi_i$. 
Taking $f = e^{h}/\EE[e^{h}]$, we observe that 
\begin{align}\label{equiva3}
\ent f =   \EE[fh] - \log \EE[e^h]  = \sum_i
c_i\EE[T_i^\star f \varphi_i]
-\log\EE\left[\prod_{i}e^{c_iT_i\varphi_i}
\right].
\end{align}
From the assumption (a) and \eqref{equiva3} it follows that 
\begin{align}\label{equiva3a}
\log\EE\left[\prod_{i}e^{c_iT_i\varphi_i}
\right]\leq \sum_ic_i\EE[T_i^\star f \varphi_i] - \sum_ic_i\ent (T_i^\star f) \,.
\end{align}
From the variational principle \eqref{entfvarprin}, for each $i\in[n]$ one has
\begin{align}\label{equiva1b}
\ent (T_i^\star f) = \EE\left[T_i^\star f\log T_i^\star f\right]\geq \EE\left[T_i^\star f\varphi_i\right] -\log\EE\left[e^{\varphi_i}
\right].
\end{align}
If we multiply by $c_i$ and sum over $i$ in \eqref{equiva1b}, and then take  exponentials in  \eqref{equiva3a} we obtain
\[
\EE\left[\prod_{i}e^{c_iT_i\varphi_i}
\right]\leq \prod_{i} \EE\left[e^{\varphi_i}
\right]^{c_i}\,.
\]
This proves (a) $\implies $ (b). 
\end{proof}
After completing this work, we learned that the generalized form of duality proposed in Theorem \ref{th:dual} had already been observed; see \cite{ahlswede1976spreading} for an early result in  the case $M=1$, and \cite{liu2017information} for a general result analogous to the one considered here. We also refer  to \cite{liu2017information} for a list of interesting applications of this generalized framework, such as hypercontractivity, strong data processing inequalities and other information-theoretic questions.

\subsection{Main result} We henceforth assume that $\Omega$ is equipped with a  metric $\dist(\cdot,\cdot)$, which we are free to choose as we want as long as the  square-integrability condition 
\begin{eqnarray}
\label{assume:squareint}
\int_{\Omega}\dist^2(o,x)\PP({\rm d}x) & < & +\infty,
\end{eqnarray}
holds for some (and hence every) point $o\in\Omega$. 
 We write $W_1$ and $W_\infty$ for the resulting $L^1$ and $L^\infty-$Wasserstein distances, defined for probability measures $\mu,\nu\in\cP(\Omega)$ as follows:
\begin{eqnarray*}
W_1(\mu,\nu) & := & \inf_{X\sim \mu,Y\sim\nu}{\mathbb E}[\dist(X,Y)],\\
W_\infty(\mu,\nu) & := & \inf_{X\sim \mu,Y\sim\nu}\mathrm{ess\,sup}\dist(X,Y),
\end{eqnarray*}
where the infimum runs over all random pairs $(X,Y)$ with marginals $\mu$ and $\nu$. Also, we define the \emph{essential Lipschitz constant} of a  function $f\colon \Omega\to\R$ as
\begin{eqnarray*}
\Lip(f) & := & \inf_{N\in\cN}\, \sup\left\{\frac{|f(x)-f(y)|}{\dist(x,y)}\colon (x,y )\in (\Omega\setminus N)^2, x\ne y  \right\},
\end{eqnarray*}
where $\cN\subset \cF$ denotes the set of zero-probability events, henceforth  called \emph{null sets}. 
\begin{theorem}[Main result]\label{th:main} Consider Markov operators $(T_1,\dots,T_M)$,   and a probability vector $(\theta_1,\dots,\theta_M)$.  Assume that,  for some numbers $\ell_1,\ldots,\ell_M\ge 0$, and a constant $\kappa\in[0,1]$, 
the following conditions are satisfied:
\begin{enumerate}[(i)]
\item (Sectional curvature). For each $1\le i\le M$ and all $x,y$ outside a null set,
\begin{eqnarray*}
W_\infty\left(T_i^\star(x,\cdot),T_i^\star(y,\cdot)\right)& \le & \ell_i\dist(x,y).
\end{eqnarray*}
\item (Average curvature). For all $x,y$ outside a null set,
\begin{eqnarray}
\label{assume:contractive}
\sum_{i=1}^M\theta_i\ell_i W_1\left(T_i(x,\cdot),T_i(y,\cdot)\right)  & \le & \left(1-\kappa\right)\dist(x,y).
\end{eqnarray}
\item (Regularity). For each $1\le i\le M,$ we have
\begin{eqnarray*}
\forall f\in L^\infty, \qquad \Lip(T_if) & < & \infty.
\end{eqnarray*}
\end{enumerate}
Then, the entropy contraction property (\ref{eq:main}) holds with constant $\kappa$.
\end{theorem}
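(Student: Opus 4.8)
The plan is to establish the equivalent entropy dissipation bound $(1-\kappa)\ent(f)\ge\sum_{i=1}^M\theta_i\ent(T_i^\star f)$ directly, by a coupling argument built on the two curvature hypotheses; since the $L^\infty$-Wasserstein condition (i) is exactly what is needed to control logarithms of densities, it is more natural to argue on the entropy side than on the dual Brascamp--Lieb side of Theorem~\ref{th:dual}, although one could equally run the argument there and invoke that duality. After rescaling so that $\EE[f]=1$, I would first reduce to regular $f$: truncating $f$ from above and below and using the continuity of $g\mapsto\ent(g)$ and $g\mapsto\ent(T_i^\star g)$, one may assume $f$ bounded away from $0$ and $\infty$; the qualitative hypothesis (iii) then guarantees that the various one-step images stay Lipschitz, so that the differentiations and limiting procedures below are legitimate and the general $f\in L\log L$ is recovered by approximation.

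The heart of the matter is a contractive gradient estimate for the ``entropy-production field''. By stationarity and the adjoint relation, $\sum_i\theta_i\ent(T_i^\star f)=\EE[f\,G]$, where $G:=\sum_i\theta_i\,T_i\log(T_i^\star f)$. Kantorovich duality turns condition (i) into the pointwise statement that, outside a null set, $T_i^\star(x,\cdot)$ and $T_i^\star(y,\cdot)$ admit a coupling $(Y,Y')$ with $\dist(Y,Y')\le\ell_i\dist(x,y)$ almost surely; writing $T_i^\star f(x)=\EE[f(Y)]$, $T_i^\star f(y)=\EE[f(Y')]$ and comparing through $\log f$ gives $\Lip(\log T_i^\star f)\le\ell_i\,\Lip(\log f)$. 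Feeding this into $T_i$, for which $|T_ih(x)-T_ih(y)|\le\Lip(h)\,W_1(T_i(x,\cdot),T_i(y,\cdot))$, and averaging with the weights $\theta_i$, condition (ii) yields the pointwise bound
\[
|G(x)-G(y)|\ \le\ \Lip(\log f)\sum_{i=1}^M\theta_i\ell_i\,W_1\big(T_i(x,\cdot),T_i(y,\cdot)\big)\ \le\ (1-\kappa)\,\Lip(\log f)\,\dist(x,y),
\]
so that in particular $\Lip(G)\le(1-\kappa)\Lip(\log f)$, the finer pointwise inequality being retained for the next step.

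To pass from this gradient bound to the entropy inequality I would interpolate, e.g.\ along $f_t\propto f^t$ for $t\in[0,1]$, so that $f_0=\mathbf 1$, $f_1=f$ and $\log f_t=t\log f-\log\EE[f^t]$. A direct computation gives $\frac{d}{dt}\ent(f_t)=t\,\var_{f_t\PP}(\log f)$ and $\frac{d}{dt}\sum_i\theta_i\ent(T_i^\star f_t)=\cov_{f_t\PP}(\log f,G_t)$ with $G_t:=\sum_i\theta_i T_i\log(T_i^\star f_t)$; since both sides vanish at $t=0$, it suffices to prove the local inequality $\cov_{f_t\PP}(\log f,G_t)\le(1-\kappa)\,t\,\var_{f_t\PP}(\log f)$ for every $t$. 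This local inequality is the crux, and it is where the full strength of (i) and (ii) is spent: writing the covariance as $\tfrac12\EE_{(f_t\PP)^{\otimes 2}}[(\log f(X)-\log f(X'))(G_t(X)-G_t(X'))]$, one expands $G_t(X)-G_t(X')$ along the contractive couplings furnished by (i) (which keeps the relevant trajectories $L^\infty$-close, hence controls ratios of densities without blow-up) and (ii) (which supplies the factor $1-\kappa$), reducing everything to a comparison of quadratic forms in $\log f$. The main obstacle is carrying out this reduction cleanly --- in particular, keeping track of the laws of the coupled pairs so that the stationarity of the $T_i$ can be invoked to close the comparison --- while ensuring, via hypothesis (iii), that all of the above (differentiability of $t\mapsto\ent(T_i^\star f_t)$, the approximation step, the Lipschitz statements modulo null sets) is rigorous.
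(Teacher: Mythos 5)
Your first step is sound and coincides exactly with the paper's Proposition~\ref{pr:lip}: the $W_\infty$ hypothesis (i), applied through couplings to $e^{g}$, gives $\Lip(\log T_i^\star e^{g})\le\ell_i\Lip(g)$, and feeding this into the $W_1$ hypothesis (ii) yields the Lipschitz contraction $\Lip(\Lambda g)\le(1-\kappa)\Lip(g)$ for the nonlinear operator $\Lambda g=\sum_i\theta_iT_i\log T_i^\star e^{g}$ (your $G=\Lambda(\log f)$). The gap is in how you pass from this gradient contraction to the entropy inequality. You interpolate along $f_t\propto f^t$ and reduce to the local inequality $\cov_{f_t\PP}(\log f,G_t)\le(1-\kappa)\,t\,\var_{f_t\PP}(\log f)$, but the mechanism you propose for proving it does not close. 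Inserting the pointwise bound $|G_t(X)-G_t(X')|\le(1-\kappa)\,t\,\Lip(\log f)\,\dist(X,X')$ into the doubled representation of the covariance gives
\begin{equation*}
\cov_{f_t\PP}(\log f,G_t)\;\le\;\tfrac{1}{2}(1-\kappa)\,t\,\Lip(\log f)\,\EE_{(f_t\PP)^{\otimes2}}\bigl[\,|\log f(X)-\log f(X')|\,\dist(X,X')\bigr],
\end{equation*}
and the Lipschitz property of $\log f$ bounds $(\log f(X)-\log f(X'))^2$ \emph{from above} by $\Lip(\log f)\,|\log f(X)-\log f(X')|\,\dist(X,X')$ — so the quantity you obtain dominates $(1-\kappa)\,t\,\var_{f_t\PP}(\log f)$ rather than being dominated by it. The comparison of quadratic forms runs in the wrong direction, and no self-adjointness or spectral structure is available under the tilted measure $f_t\PP$ to repair it. Note also that your local inequality, integrated in $t$, gives the theorem, so it is an a priori \emph{stronger} statement than what is claimed; there is no reason the hypotheses should deliver it pointwise in $t$.

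The paper bridges this gap differently, and the difference is essential. It fixes $m>0$, restricts to the compact set $K=\{f:\|f\|_\infty\le m,\ \EE[e^f]=1\}$, and studies the extremal ratio $\rho=\sup_{f\in K\setminus\{0\}}\sum_i\theta_i\ent(T_i^\star e^f)/\ent(e^f)$. A first-variation argument at an optimizer $f$ (Lemma~\ref{lm:optimizers}) shows that $\Lambda f-\rho f$ is essentially constant (up to one-sided inequalities on $\{f\ne\pm m\}$), whence $\rho\,\Lip(f)\le\Lip(\Lambda f)\le(1-\kappa)\Lip(f)$; hypothesis (iii) guarantees $\Lip(f)<\infty$ so one may divide. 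The degenerate case where the maximizing sequence collapses to $0$ is handled by a separate variance contraction (Proposition~\ref{pr:variance}), itself proved by iterating the Lipschitz contraction and invoking the spectral theorem for $T=\sum_i\theta_iT_iT_i^\star$. To complete your proof you would need either to supply a genuine argument for your local covariance inequality (which I do not believe follows from (i)--(iii) by the route you describe), or to replace the interpolation step by an extremality argument of the above type.
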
 
There are several important remarks to make about this result. 
\begin{remark}[Regularization]\label{rk:regul}  Assumption (iii)  trivially holds when $\Omega$ is finite and, more generally, when the metric $\dist(\cdot,\cdot)$ is discrete. When it fails, it can often be bypassed  by an appropriate regularization. More precisely, suppose that our workspace $(\Omega,\cF,\PP)$ admits a family  of measure-preserving transition kernels $(P_\tau)_{\tau\in(0,1)}$  such that
\begin{enumerate}
\item (Continuity). For each $f\in L^2$, we have $P_\tau f\xrightarrow[\tau\to 0]{} f$ in  $L^2$.
\item (Regularity). For each $\tau\in(0,1)$, and each $f\in L^\infty$, we have $\Lip(P_\tau f) < \infty$.
\item (Non-negative sectional curvature). For some sequence $\epsilon_\tau\to 0$, as $\tau\to0$, for all $x,y\in\Omega$ outside a null set,
\begin{eqnarray*}
W_\infty\left(P_\tau(x,\cdot),P_\tau(y,\cdot)\right)& \le & (1+\epsilon_\tau)\dist(x,y)\\
W_\infty\left(P_\tau^\star(x,\cdot),P_\tau^\star(y,\cdot)\right)& \le & (1+\epsilon_\tau)\dist(x,y)\,.
\end{eqnarray*}
\end{enumerate}
Then, Assumption (iii) in Theorem \ref{th:main} is unnecessary: indeed, we can apply the theorem to the operators $(P_\tau T_i)_{1\le i\le M}$ -- which do fulfill all the requirements -- instead of $(T_i)_{1\le i\le M}$ and then let $\tau\to 0$ to obtain the desired  conclusion. This regularization trick will be implemented in the Gaussian case in Section \ref{sec:gauss}, using the Ornstein-Uhlenbeck semi-group.  
\end{remark}
\begin{remark}[Peres-Tetali conjecture]The result is already highly non-trivial in the special case where $M=1$, $\ell=1$ and  $\Omega$ is finite: indeed, it is then exactly the main result of the recent preprint \cite{caputo2024entropy}, devoted to the celebrated Peres-Tetali conjecture. The present paper can be seen as a (powerful) refinement of the method used therein.
\end{remark}
\begin{remark}[The product case]\label{rem:product}
Recovering the original Shearer inequality (\ref{eq:BT}) from Theorem \ref{th:main} is easy: consider a product space $(\Omega,\cF,\PP)=\bigotimes_{i=1}^n(\Omega_i,\cF_i,\PP_i)$, for some $n\in\N$, and define a family of self-adjoint Markov operators $(T_A)_{A\subset [n]}$ 
 by the formula
\begin{eqnarray*}
T_A f & := & \EE_A[f] \ = \ \EE[f|Z_i,i\in\bA],
\end{eqnarray*}
where we recall that $Z=(Z_1,\ldots,Z_n)$ denotes the identity map on $\Omega$. 
More explicitly,  $T_A(x,\cdot)$ is the law of the random vector obtained from $x$ by replacing $x_i$ with $Z_i$ for each $i\in A$. Now, equip $\Omega$  with the Hamming distance $\dist(x,y):=\#\{i\in[n]\colon x_i\ne y_i\}$. Then,
\begin{eqnarray*}
 W_\infty\left(T_A(x,\cdot),T_A(y,\cdot)\right) & \le & \#\{i\in\bA\colon x_i\ne y_i\} \ \le \ \dist(x,y),
 \end{eqnarray*} 
as witnessed by the trivial coupling that consists in  replacing both $x_i$ and $y_i$ by $Z_i$  for each $i\in A$. Since $W_1\le W_\infty$, we moreover have, for any probability vector $(\theta_A)_{A\subset[n]}$,
 \begin{eqnarray*}
\sum_{A\subset[n]}\theta_A W_1\left(T_A(x,\cdot),T_A(y,\cdot)\right) & \le & \sum_{i=1}^n {\bf 1}_{x_i\ne y_i}\left(1-\sum_{A\ni i}\theta_A\right)\ \le \ (1-\theta_\star)\dist(x,y).
 \end{eqnarray*} 
Thus, Theorem \ref{th:main} applies with $M=2^n$, $\ell\equiv 1$, $\kappa=\theta_\star$. Therefore, 
for all $f\in L\log L$,
\begin{eqnarray*}
\sum_{A\subset[n]}\theta_A\,\ent\left(\EE_A[f]\right) & \le & (1-\theta_\star)\ent(f).
\end{eqnarray*}
Using the chain rule (\ref{def:chain}), this is equivalent to (\ref{eq:BT}).
\end{remark}

\subsection{Some examples}\label{sec:applications}
We consider several applications of our main result, to both continuous and discrete settings. 
\paragraph{Discrete Gaussian Free Field.}
%

The discrete Gaussian free field is a Gaussian probability measure on $\R^n$, with covariance of the form $\Gamma=({\rm Id} - P)^{-1}$, where $P$ is a symmetric $n\times n$ sub-stochastic matrix, that is $P_{i,j}\ge 0$ for all $i,j\in[n]$, $\sum_jP_{i,j}\le 1$ for all $i\in[n]$, and such that ${\rm Id} - P$ is invertible. We also assume that $P$ is irreducible, that is for all $i,j$ there exists $k\in\N$ such that $P^k_{i,j}>0$.  A well studied example in equilibrium statistical mechanics is obtained by taking $P$ as the transition matrix of the random walk on $\Z^d$ that is killed upon exiting a finite domain $V\subset \Z^d$, 
see e.g.\ \cite[Chapter 8]{friedli2017statistical}. 

Thus, we fix a positive definite covariance matrix $\Gamma=(\Gamma_{ij})_{1\le i,j\le n}$ as above, and consider the probability space
\begin{equation}\label{eq:dgff}
\Omega := \R^n,\qquad \cF :=  \cB(\R^n), \qquad \PP({\rm d}x):=\frac{1}{\sqrt{(2\pi)^{n}\det(\Gamma)}}\exp\left(-\frac{x^\top\Gamma^{-1}x}{2}\right){\rm d}x.
\end{equation}
As motivated in Section \ref{sec:motivation}, we assume to be given a probability vector $(\theta_A)_{A\subset[n]}$, and we seek to estimate  the optimal constants  $\lambda$ and $\kappa$ in the inequalities
\begin{eqnarray}
\label{eq:GBVT}
\forall f\in L^2,\qquad 
\lambda\,\var(f) & \le & \sum_{A\subset[n]}\theta_A\EE\left[\var_A(f)\right],\\
\label{eq:GBET}
\forall f\in L\log L,\qquad 
\kappa\,\ent(f) & \le & \sum_{A\subset[n]}\theta_A\EE\left[\ent_A(f)\right].
\end{eqnarray}
The constant $\lambda$  coincides with the \emph{spectral gap} of the \emph{weighted block dynamics}, that is the Markov chain where at each step a block $A\subset[n]$ is picked with probability $\theta_A$ and the variables $(X_i, i\in A)$ are resampled according to the conditional distribution $\PP(\cdot|X_{A^c})$. Indeed, \eqref{eq:GBVT} can be written in the form $\lambda\,\var(f) \le \cE(f,f)$ where $\cE$ denotes the Dirichlet form of the weighted block dynamics 
\begin{eqnarray}
\label{eq:dirG}
\forall f,g\in L^2,\qquad 
\cE(f,g) & := & \sum_{A\subset[n]}\theta_A\EE\left[\cov_A(f,g)\right],
\end{eqnarray}
and $\cov_A(f,g)=\EE_A[fg]-\EE_A[f]\EE_A[g]$ is the covariance w.r.t.\ $\PP(\cdot|X_{A^c})$.
On the other hand, the constant $\kappa$ in \eqref{eq:GBET} provides a lower bound on the \emph{modified log-Sobolev} constant, describing the rate of exponential decay of the relative entropy  for the same dynamics in continuous time. Indeed,  the modified log-Sobolev constant $\rho_{\rm MLS}$ is defined as the largest $\rho\ge0$ such that 
\begin{eqnarray}
\label{eq:GMLS}
\forall f\in L\log L,\qquad 
\rho\,\ent(f) & \le & \cE(f,\log f),
\end{eqnarray}
and, since by convexity $\cov_A(f,\log f)\ge \ent_A(f)$ for all $f\in L\log L$, $A\subset[n]$, it follows that $\kappa\le \rho_{\rm MLS}$. Moreover, it is well known that $\lambda\ge\kappa$ and $2\lambda \ge \rho_{\rm MLS}$. 
 We refer e.g.\ to \cite{entropy} for more background and a proof of these standard relations. 
 
 An application of our general framework will allow us to control 
 the 
 fundamental constants $\lambda,\kappa$ in terms of the least eigenvalue of the ``laplacian" $\Delta:={\rm Id} - P$, and to show that they actually coincide for the Glauber dynamics, that is the process obtained when $\theta_A = \frac1n\,{\bf 1}_{|A|=1}$. 
\begin{theorem}
\label{th:gauss}
For any irreducible, symmetric, sub-stochastic matrix $P$ such that ${\rm Id} - P$ is invertible, the Gaussian free field $\PP$ with covariance $\Gamma=({\rm Id} - P)^{-1}$ satisfies 
\begin{eqnarray}\label{eq:mainGauss}
\kappa \ge \delta\,\theta_\star\,,
\end{eqnarray}
where $\delta$ denotes the smallest eigenvalue of the matrix $\Delta={\rm Id} - P$, and $\theta_\star=\min_{1\le i\le n}\sum_{A\ni i}\theta_A$.
Moreover, the Glauber dynamics satisfies
\begin{eqnarray}\label{eq:Glaub}
 \lambda= \kappa =\frac\delta{n}\,.
\end{eqnarray}
\end{theorem}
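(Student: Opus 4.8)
For both parts the plan is to feed Theorem~\ref{th:main} with the self-adjoint Markov operators $T_A f:=\EE_A[f]$, $A\subset[n]$, and the weights $(\theta_A)$. Since each $T_A$ is a conditional expectation, the chain rule \eqref{def:chain} gives $\ent(T_A^\star f)=\ent(\EE_A f)=\ent(f)-\EE[\ent_A(f)]$, so the conclusion \eqref{eq:main} of Theorem~\ref{th:main} is \emph{exactly} \eqref{eq:GBET} with the same $\kappa$. Hence it suffices to produce a square-integrable metric on $\R^n$, numbers $\ell_A\ge 0$, and the value $\kappa=\delta\theta_\star$ meeting hypotheses (i)--(iii). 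The metric will not be discrete, so hypothesis (iii) is obtained for free through the regularization of Remark~\ref{rk:regul}: the Ornstein--Uhlenbeck semigroup $(P_\tau)$ attached to $\PP$ is measure preserving, self-adjoint, sends $L^\infty$ into smooth (hence essentially Lipschitz) functions, and, for any translation-invariant norm metric, $W_\infty(P_\tau(x,\cdot),P_\tau(y,\cdot))=e^{-\tau}\dist(x,y)$; one applies Theorem~\ref{th:main} to $(P_\tau T_A)$ and lets $\tau\to 0$.

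For (i) and (ii) I would first record the explicit kernel: $T_A(x,\cdot)$ is the Gaussian measure on $\R^n$ that equals the Dirac mass at $x_{A^c}$ on the coordinates of $A^c$ and is $\mathcal N(m_A(x),\Delta_{AA}^{-1})$ on the coordinates of $A$, where $m_A(x)$ is the $\Delta$-harmonic extension of $x_{A^c}$; equivalently the mean vector of $T_A(x,\cdot)$ is $\Pi_A x$, the $\Delta$-orthogonal projection of $x$ onto the subspace of vectors that are $\Delta$-harmonic on $A$. As $T_A(x,\cdot)$ and $T_A(y,\cdot)$ have the same covariance, the translation coupling is optimal for both $W_1$ and $W_\infty$, giving $W_1(T_A(x,\cdot),T_A(y,\cdot))=W_\infty(T_A(x,\cdot),T_A(y,\cdot))=\dist(\Pi_A x,\Pi_A y)$. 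Thus (i) becomes the statement that $\Pi_A$ is $\ell_A$-Lipschitz for the chosen metric, and (ii) becomes $\sum_A\theta_A\ell_A\,\dist(\Pi_A x,\Pi_A y)\le(1-\delta\theta_\star)\dist(x,y)$.

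The choice of metric is the heart of the matter, and the step I expect to be the main obstacle. With the Euclidean metric $\|x-y\|_\Delta=((x-y)^\top\Delta(x-y))^{1/2}$ each $\Pi_A$ is an orthogonal projection, so one may take $\ell_A=1$, but then $\|\Pi_A u\|_\Delta^2=\|u\|_\Delta^2-\|u-\Pi_A u\|_\Delta^2$ and expanding the square root only yields $\kappa$ of order $\tfrac12\delta\theta_\star$, losing a factor of two (this is already visible in the product case $\Delta=\mathrm{Id}$, Glauber weights). To reach the sharp constant one must instead use an $\ell^1$-type metric adapted to $\Delta$ --- for $\Delta=\mathrm{Id}$ this is the ordinary $\ell^1$ norm, for which resampling coordinate $i$ decreases the $\ell^1$-norm of the difference by \emph{exactly} $|u_i|$, so that the average over $i$ produces a clean $(1-\tfrac1n)$ contraction. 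Writing $\Pi_A$ in its probabilistic form as a sub-Markov averaging operator for the killed walk with kernel $P$, one checks that in the suitably weighted metric $\Pi_A$ is a weak contraction (so $\ell_A=1$) and that the averaged-curvature bound (ii) is controlled by the spectral estimate $\Delta\succeq\delta\,\mathrm{Id}$; this is where irreducibility and the presence of killing enter, and where the constant $\delta\theta_\star$ emerges. I would spell out the metric and carry out this verification in detail, as it is the one genuinely technical point.

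Finally, Glauber corresponds to $\theta_A=\tfrac1n\mathbf 1_{|A|=1}$, for which $\theta_\star=\tfrac1n$; so the bound just established gives $\kappa\ge\tfrac\delta n$, and since $\lambda\ge\kappa$ always, $\lambda\ge\tfrac\delta n$. For the matching upper bound I would test the spectral-gap inequality against $f(x)=v^\top x$ with $v$ a bottom eigenvector of $\Delta$ ($\Delta v=\delta v$, $\|v\|=1$): then $\cE(f,f)=\tfrac1n\sum_i v_i^2/\Delta_{ii}$ while $\var(f)=v^\top\Delta^{-1}v=1/\delta$, so $\cE(f,f)/\var(f)=\tfrac\delta n\sum_i v_i^2/\Delta_{ii}$, which equals $\tfrac\delta n$ precisely when $\Delta$ has unit diagonal --- in particular for the killed random walk with $P_{ii}=0$, the main case of interest. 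Hence $\lambda\le\tfrac\delta n$ there, and together with $\lambda\ge\kappa\ge\tfrac\delta n$ this forces $\lambda=\kappa=\tfrac\delta n$; in the general sub-stochastic case one reaches the same upper bound after perturbing the test function (or passing to a suitable quadratic one).
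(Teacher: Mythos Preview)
Your outline matches the paper's strategy step for step: apply Theorem~\ref{th:main} to the conditional expectations, dispatch (iii) via Ornstein--Uhlenbeck regularization, reduce (i)--(ii) through the translation coupling to a Lipschitz bound for the harmonic-extension projection $\Pi_A$, recognize that $\|\cdot\|_\Delta$ loses a factor~two and that an $\ell^1$-type metric is needed, and bound the Glauber gap from above with a linear eigenfunction of~$\Delta$. Your upper-bound computation is also the paper's (and your caveat about $\Delta_{ii}=1$ is apt).

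But the step you explicitly postpone is the whole proof, and your hints (``suitably weighted'', ``sub-Markov averaging'') do not pin it down. The paper takes
\[
\dist(x,y)\;=\;\sum_{i=1}^n \psi_i\,\bigl|(\Delta x)_i-(\Delta y)_i\bigr|,
\]
with $\psi$ the \emph{Perron--Frobenius eigenvector} of $\Delta$ (positive by irreducibility of $P\ge0$). Two non-obvious moves are hiding here: passing to the coordinates $u=\Delta x$, and weighting by $\psi$. In these coordinates $\Pi_A$ acts by a matrix with \emph{nonnegative entries} --- its only nontrivial block is $P_{A^c\times A}\sum_{k\ge0}(P_{A\times A})^k$ --- which is exactly what makes the $\ell^1$ triangle inequality lossless and gives $\ell_A=1$. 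The Perron relation $\Delta\psi=\delta\psi$ together with the same Neumann series then forces the $\psi$-weighted $\ell^1$ norm to drop by at least $\delta\sum_{j\in A}\psi_j\bigl|(\Delta z)_j\bigr|$ under $\Pi_A$; summing over $A$ with weights $\theta_A$ yields the factor $\delta\theta_\star$. Both ingredients rely on the entrywise nonnegativity of $P$ and on the specific choice of $\psi$, neither of which appears in your sketch; without them the ``killed-walk'' intuition alone does not produce the constant.
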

Theorem \ref{th:gauss} can be seen as the Gaussian Free Field version of Shearer inequality, which is shown to be optimal for the Glauber dynamics. In fact, as the proof will show, in that case the inequality (\ref{eq:GBVT}) is saturated by a linear function $f(x)=z^\top x$, where $z\in\R^n$ is such that $\Delta z= \delta\,z$. Thus the spectral gap is achieved at a linear function. From the above mentioned inequalities $\kappa\le \rho_{\rm MLS}\le 2\lambda$, this also shows that the Glauber dynamics has a modified log-Sobolev constant satisfying 
 \begin{eqnarray}
\label{def:mlsGauss}
\frac\delta{n}\;\le\;\rho_{\rm MLS}\;\le\; \frac{2\delta}{n}\,\,.
\end{eqnarray}
Quantifying the speed of convergence to equilibrium of the
Glauber dynamics for Gaussian free fields is  a natural problem which, to the best of our knowledge, has only been investigated in the two-dimensional square lattice \cite{ganguly2023cutoff}, that is when $P$ is the transition matrix of the random walk on $\Z^2$ that is killed upon exiting a finite domain $V\subset \Z^2$. As a consequence of Theorem \ref{th:gauss}, one can compute explicitly both the spectral gap and the entropy contraction constant $\kappa$ of the lattice Gaussian free field, for 
any dimension $d\ge 1$, and for any domain $V$ of the form $V=[n_1]\times\cdots\times[n_d]\subset \Z^d$, 
since in that case $\lambda=\kappa = \delta/n$, and 
\begin{eqnarray*}
\delta \; = \; \frac{2}{d}\sum_{k=1}^d\left[1-\cos\left(\frac{\pi}{n_k+1}\right)\right],
\end{eqnarray*}
which behaves like $2\pi^2/n^2$ in the regime $n_1=n_2=\cdots=n_d=n$ and $n\to\infty$. 
 
 \begin{remark}\label{rem:gaussian}
 Combining Theorem \ref{th:gauss} with Theorem \ref{th:dual} one obtains equivalent B-L inequalities for the gaussian measures. The latter were in fact already known, since they can be directly related to the classical B-L inequalities for Lebesgue measures, see e.g.\ \cite[Fact 6]{barthe2011correlation}. A direct proof of the gaussian B-L inequality is also possible along the lines of \cite{lehec2014short}, see \cite[Theorem 3]{courtade2024rigid}. Thus, one could have obtained Theorem \ref{th:gauss} from these known estimates. However, the purpose of  Theorem \ref{th:gauss} is to illustrate that an entirely different approach based on curvature can be used to obtain optimal entropy factorization estimates, and, in principle, it can yield an alternative proof of the classical B-L inequalities as well. One caveat is that we assume the specific form $\Gamma=({\rm Id}-P)^{-1}$ for the covariance matrix of our gaussian measure, whereas it would be desirable to have the result for all positive definite matrices $\Gamma$. For technical reasons, in this more general setup 
 at the moment we can only prove the desired estimate 
 up to a factor $2$, that is we can show that $\sigma\le 2\kappa\le 2\sigma$, for an explicit spectral quantity $\sigma=\sigma(\Gamma,\theta)$, see Remark \ref{rem:sigma} for more details. We believe it should be possible to obtain $\kappa=\lambda=\sigma$ for all $(\Gamma,\theta)$, as we prove in the case of $\Gamma=({\rm Id}-P)^{-1}$ with Glauber dynamics in Theorem \ref{th:gauss}, 
 thus obtaining full applicability of our argument to derive 
 Gaussian and classical B-L inequalities.   
 \end{remark}

 \paragraph{Permutations and weighted block shuffles.}
 
The next example we consider is the case where $\PP$ is the uniform measure over $S_n$, the set of permutations of $[n]$:
 \begin{equation}\label{eq:perm0}
\Omega := S_n,\qquad 
\PP(\sigma):=\frac{1}{n!}\,,\quad \sigma\in\Omega\,.
\end{equation}
As above, we consider the weighted block dynamics associated to a given probability vector $(\theta_A, A\subset[n])$, with Dirichlet form formally given exactly by the expression in \eqref{eq:dirG}.
If we think of the configuration $\sigma\in\Omega$ as representing the positions of $n$ cards, the weighted block dynamics is the Markov chain where at each step a block $A$ is chosen with probability $\theta_A$
and all cards located in $A$ get uniformly reshuffled among themselves.  We refer to this as the $\theta$-weighted block shuffle.  The case $\theta_A=\frac1{\binom{n}2}{\bf 1}_{|A|=2}$ is the well known random transposition dynamics.

\begin{theorem}\label{th:permutations}
For any choice of the probability vector $(\theta_A, A\subset[n])$,
\begin{eqnarray}
\label{eq:perm}
\forall f\in L\log L,\qquad 
\theta_{\star\star}\,\ent(f) & \le & \sum_{A\subset[n]}\theta_A\EE\left[\ent_A(f)\right],
\end{eqnarray}
where $\theta_{\star\star}:=\min_{1\le i<j\le n}\sum_{ A\supset\{i,j\}}\theta_A$. 
\end{theorem}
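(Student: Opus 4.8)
The plan is to deduce Theorem~\ref{th:permutations} from the Main Result (Theorem~\ref{th:main}) by choosing the right Markov operators and the right metric on $S_n$. For each block $A\subseteq[n]$, let $T_A f:=\EE_A[f]=\EE[f\mid \sigma_j: j\notin A]$ be the conditional expectation obtained by reshuffling the cards in $A$ uniformly; each $T_A$ is a self-adjoint, measure-preserving Markov operator, so $T_A^\star=T_A$. Since $\Omega=S_n$ is finite, Assumption~(iii) (Regularity) is automatic, so the only real work is to find a metric $\dist$ on $S_n$ for which the sectional curvature bound~(i) and the average curvature bound~(ii) hold with $\kappa=\theta_{\star\star}$ and appropriate constants $\ell_A$. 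Then~\eqref{eq:main} applied to these operators gives $\sum_A\theta_A\,\ent(T_A^\star f)\le(1-\theta_{\star\star})\ent(f)$, which by the chain rule~\eqref{def:chain} (exactly as in Remark~\ref{rem:product}) is equivalent to~\eqref{eq:perm}.

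The natural choice of metric is the transposition (Cayley) distance $\dist(\sigma,\tau)=$ (minimal number of transpositions needed to go from $\sigma$ to $\tau$) $=n-(\text{number of cycles of }\sigma\tau^{-1})$, or perhaps a suitably rescaled variant. Here is the intuition behind why $\theta_{\star\star}$ (pairs, not singletons) shows up: resampling a block $A$ can only decrease the distance between two configurations by using the coordinates \emph{inside} $A$, and a single coordinate pinned outside $A$ contributes nothing unless its ``partner'' in a cycle is also outside $A$; hence the relevant marginal weight is over pairs $\{i,j\}$, giving $\theta_{\star\star}$. Concretely, for~(i) I would produce, for any two permutations $x,y$, a coupling of $T_A(x,\cdot)$ and $T_A(y,\cdot)$ witnessing $W_\infty(T_A(x,\cdot),T_A(y,\cdot))\le \ell_A\,\dist(x,y)$ with $\ell_A=1$: couple by using the \emph{same} uniform reshuffling of the $A$-cards in both, so the images agree on $A$ and differ outside $A$ exactly where $x,y$ already differed — this should keep the transposition distance from increasing, up to a careful accounting of how cycles interact with the block. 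For~(ii), with all $\ell_A=1$, I must show $\sum_A\theta_A\,W_1(T_A(x,\cdot),T_A(y,\cdot))\le(1-\theta_{\star\star})\dist(x,y)$; using $W_1\le W_\infty$ this reduces to a deterministic, combinatorial estimate on how much the transposition distance contracts in expectation when a random block is reshuffled, and the point is that the ``uncontracted'' part is controlled by blocks $A$ that miss at least one of every relevant pair, whose total weight is at most $1-\theta_{\star\star}$.

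The main obstacle, and the step I would spend the most care on, is the curvature accounting for the transposition metric: unlike the Hamming metric of Remark~\ref{rem:product}, the transposition distance is not a sum of independent per-coordinate contributions, because cycles link coordinates together, and reshuffling a block can merge or split cycles in ways that are not monotone coordinate-by-coordinate. I would handle this by working with a good representation of $\dist(\sigma,\tau)$ — e.g.\ via the cycle structure of $\sigma\tau^{-1}$, or via an explicit optimal sequence of transpositions — and checking that the synchronous coupling (same reshuffle on the $A$-block) never increases this quantity, and strictly decreases it on average by at least the ``pair-covered'' fraction. A possible simplification is to first prove the bound for $|A|=2$ (random transposition shuffle, matching the known result) and then bootstrap to general blocks by a convexity/monotonicity argument, or alternatively to reduce to the two-point inequality~(ii) by a greedy pairing of the discrepancies between $x$ and $y$ into transpositions and analyzing one transposition's worth of discrepancy at a time. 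If the clean bound $\ell_A\equiv1$ does not quite hold for the raw Cayley metric, an alternative is to rescale or tilt the metric (a weighted word metric) so that~(i) holds with $\ell_A=1$ while~(ii) still yields the factor $1-\theta_{\star\star}$; getting both inequalities to close simultaneously with the sharp constant is the crux.
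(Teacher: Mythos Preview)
Your plan is the paper's: apply Theorem~\ref{th:main} to $T_A=\EE_A$ with the transposition (Cayley) distance on $S_n$ and $\ell_A\equiv 1$. But the two places you flag as delicate are in fact one-liners, and one of your statements is wrong. For Assumption~(i), no ``careful accounting of how cycles interact'' is needed: the Cayley metric is \emph{right-invariant}, so for any permutation $\pi$ supported on $A$ one has $(\sigma\pi)(\eta\pi)^{-1}=\sigma\eta^{-1}$ and hence $\dist(\sigma\pi,\eta\pi)=\dist(\sigma,\eta)$ exactly. Your claim that under the synchronous coupling ``the images agree on $A$'' is false --- if $\sigma_k\ne\eta_k$ for some $k\in A$ then $(\sigma\pi)_{\pi^{-1}(k)}=\sigma_k\ne\eta_k=(\eta\pi)_{\pi^{-1}(k)}$ --- but it is also unnecessary: exact distance preservation is all that (i) requires.

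For Assumption~(ii), the device you list only as a ``possible simplification'' is the entire argument. Since the Cayley metric is a graph metric, path coupling reduces~\eqref{assume:contractive} to pairs at distance $1$, i.e.\ $\eta=\sigma\cdot(i\,j)$. If $\{i,j\}\subset A$ then $\sigma$ and $\eta$ agree on $A^c$, so $T_A(\sigma,\cdot)=T_A(\eta,\cdot)$ and one couples by setting $\eta'=\sigma'$; otherwise the synchronous coupling keeps the distance at $1$ by right-invariance. Hence $\sum_{A}\theta_A\, W_1(T_A(\sigma,\cdot),T_A(\eta,\cdot))\le\sum_{A}\theta_A\,{\bf 1}_{A\not\supset\{i,j\}}\le 1-\theta_{\star\star}$, with no bootstrapping from $|A|=2$ and no tilted metric required.
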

The estimate \eqref{eq:perm} is equivalent to the subadditivity statement 
\begin{eqnarray}
\label{standardizedperm}
\forall f\in L\log L,\qquad\sum_{A\subset[n]}\theta_A\,\ent\left(\EE_A[f]\right) & \le & (1-\kappa)\ent(f)\,,
\end{eqnarray}
with $\kappa = \theta_{\star\star}$, and can be used to obtain equivalent B-L inequalities via Theorem \ref{th:dual}. 
As we observed in \eqref{eq:GMLS}, the bound \eqref{eq:perm} implies $\rho_{\rm MLS} \ge \theta_{\star\star}$, providing a lower bound on the rate of exponential decay to stationarity of the continuous time version of the $\theta$-weighted block shuffle. 
  
It is interesting to note that the estimate \eqref{eq:perm} is equivalent to the B-L bounds proved in \cite[Proposition 21]{barthe2011correlation}, which were obtained by an entirely different approach, namely using monotonicity along semigroup interpolations for the corresponding B-L inequality. 
On the other hand, in the ``mean field'' case where 
$\theta_A$ depends only on the size of $A$, that is $\theta_A=\varphi(|A|)$ for some function $\varphi\ge 0$, optimal bounds on the constant $\kappa$ in  \eqref{standardizedperm} were obtained recently in \cite{BristielC}. In particular, \cite[Theorem 1.9]{BristielC} shows that for any $\ell\in\{1,\dots,n\}$,  if $\theta_A =  \frac1{\binom{n}{\ell}}\,{\bf 1}_{|A|=\ell}$, then 
\eqref{standardizedperm}  holds with the optimal constant
\[
\kappa=\kappa_{\ell,n} := \frac{\log(\ell!)}{\log(n!)},
\]
and that the only extremal functions are Dirac masses. 
Here one has $\theta_{\star\star}= \frac{\ell(\ell-1)}{n(n-1)} $ which is smaller than $\kappa_{\ell,n}$ for $\ell\in\{2,\dots,n-1\}$, and therefore Theorem \ref{th:permutations} is not optimal for such homogeneous weights. However, 
Theorem \ref{th:permutations}, as well as  \cite[Proposition 21]{barthe2011correlation}, addresses the entropy factorization problem for arbitrary weights $(\theta_A)$, and in this generality they may provide the best known bounds.

 \paragraph{Entropy of $\ell^n_p$--spherical marginals.}
 A celebrated result of Carlen-Lieb-Loss \cite{carlen2004sharp} states that the uniform probability measure on the unit sphere $\bbS^{n-1}$ satisfies a subadditivity estimate with a factor 2, independently of $n$, with respect to the bound satisfied by a 
 product measure, 
 thus quantifying the departure from independence for this distribution. An equivalent formulation of this is that when $\PP$ is the uniform probability measure on $\bbS^{n-1}$, then the inequality \eqref{standardized}, taking the homogeneous weights $\theta_A : = \frac1n{\bf 1}_{|A|=n-1}$,  holds with constant 
 \begin{align}\label{eq:CCL}
 \kappa = 1-\frac2{n} \,, 
 \end{align}
 for all $n\ge 2$. Moreover, the same authors also show that this constant is optimal. Extensions of this result were later proposed by \cite{barthe2006entropy} and \cite{barthe2011correlation}. In particular, 
 it was shown in \cite[Proposition 11]{barthe2011correlation} that for any choice of the probability vector $\theta$, one has the estimate \eqref{standardized} with $\kappa = \theta_{\star\star}$, where $\theta_{\star\star}:=\min_{1\le i<j\le n}\sum_{ A\supset\{i,j\}}\theta_A$. Note that this holds for arbitrary $\theta$ and it recovers the optimal bound \eqref{eq:CCL} in the ``all but one" case $\theta_A : = \frac1n{\bf 1}_{|A|=n-1}$. The proof of these bounds was based on an extension of the approach introduced in \cite{carlen2004sharp}, which used monotonicity along semigroup interpolations for the corresponding B-L inequality. In the special case $\theta_A : = \frac1{\binom{n}2}\,{\bf 1}_{|A|=2}$, the associated weighted block dynamics \eqref{eq:dirG} is often referred to as the Kac walk on the sphere.     
In this case the bound $\kappa = \theta_{\star\star}$ gives  
\begin{align}\label{eq:stst2}
 \kappa = \frac2{n(n-1)}\,.
 \end{align}
Letting $\rho_{\rm MLS}$ denote the modified log-Sobolev constant from  \eqref{eq:GMLS}, the simple bound  $\rho_{\rm MLS} \ge \kappa$ allows one to obtain $\rho_{\rm MLS} \ge \frac2{n(n-1)}$, which recovers (and actually improves by a factor 2) an estimate previously shown by Villani in \cite[Theorem 6.1]{villani2003cercignani} by a different type of semigroup interpolation. 

Here we discuss how our alternative method could be applied to obtain the same estimates, in the general setting of $\ell^n_p$--spheres, for all $p>0$. Namely, for $p>0$, define the $\ell^n_p$--sphere $\bbS^{n-1}_p$ as
 \begin{align}\label{eq:lp}
\Omega\;=\; \bbS^{n-1}_p\;:=\;\{x\in\R^n:\;\|x\|_p = 1\}\,,\qquad \|x\|_p = \left(\sum_{i=1}^n|x_i|^p\right)^{1/p}\,.
 \end{align}
We let $\PP$ denote the law of the vector $X=(X_1,\dots,X_n)$ obtained as
 \begin{align}\label{eq:gabe}
X = \frac{(G_1,\dots,G_n)}{S_p(G)^{1/p}}\,,\qquad S_p(x):=\sum_{i=1}^n|x_i|^p\,,
 \end{align}
where $G=(G_1,\dots,G_n)$ is a vector of i.i.d.\ real random variables with density proportional to $e^{-|x|^p}$, $x\in\R$. 
$\PP$ is also called the \emph{cone measure} on $\bbS^{n-1}_p$; see e.g.\ \cite{barthe2005probabilistic}. A characteristic property of this construction is that $X$ and $S_p(G)$ are independent. Note that, for $p=1,2$, $\PP$  coincides with the uniform distribution on $\bbS^{n-1}_1$, and $\bbS^{n-1}_2=\bbS^{n-1}$ respectively. 
We believe that it is possible to prove the following statement as a consequence of Theorem \ref{th:main}.
\begin{conjecture}\label{claim}
For all $p>0$, all probability vectors $\theta$, the cone measure $\PP$ on $\bbS^{n-1}_p$ satisfies   
\begin{eqnarray}
\forall f\in L\log L,\qquad 
\theta_{\star\star}\,\ent(f) & \le & \sum_{A\subset[n]}\theta_A\EE\left[\ent_A(f)\right].
\end{eqnarray}
\end{conjecture}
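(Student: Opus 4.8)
The plan is to apply Theorem~\ref{th:main} with the collection of conditional-expectation operators $T_A f = \EE_A[f] = \EE[f\mid X_i, i\in\bA]$, equip $\bbS^{n-1}_p$ with a suitable metric, and verify the three hypotheses (sectional curvature, average curvature, regularity) with $\ell_A \equiv 1$ and $\kappa = \theta_{\star\star}$. The central point, as in the permutation case (Theorem~\ref{th:permutations}) and the product case (Remark~\ref{rem:product}), is that the operators $T_A$ are self-adjoint, so $T_A^\star = T_A$, and the whole problem reduces to estimating, for each pair $(x,y)$ outside a null set, the $W_\infty$- and $W_1$-distances between the conditional laws $T_A(x,\cdot)$ and $T_A(y,\cdot)$. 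Here $T_A(x,\cdot)$ is the law of the vector obtained from $x$ by resampling the coordinates $(X_i, i\in A)$ from their conditional distribution given the frozen coordinates $(x_j, j\in\bA)$; because of the cone-measure structure \eqref{eq:gabe}, this conditional law is itself (a rescaled copy of) a lower-dimensional cone measure on the ``residual sphere'' $\{z\in\R^A : S_p(z) = 1 - S_p(x_{\bA})\}$.

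First I would fix the metric. The natural choice -- mimicking the Hamming distance used in Remark~\ref{rem:product} and the transposition metric underlying Theorem~\ref{th:permutations} -- is a graph/geodesic metric on $\bbS^{n-1}_p$ in which two configurations are at distance governed by the number of coordinates in which they differ, suitably interpolated so that the square-integrability \eqref{assume:squareint} holds; since $\PP$ has bounded support this is automatic for any reasonable choice. The key geometric estimate to establish is the \emph{sectional curvature bound}: for $x,y$ differing only in coordinates outside some controlled set, one must exhibit a coupling of $T_A(x,\cdot)$ and $T_A(y,\cdot)$ whose cost is at most $\dist(x,y)$ almost surely -- the analogue of ``replace $x_i$ and $y_i$ by the same $Z_i$ for $i\in A$''. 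The subtlety compared to the product case is that resampling $(X_i,i\in A)$ on the sphere is a \emph{constrained} operation: the resampled block must satisfy $S_p(\text{block}) = 1 - S_p(\text{frozen part})$, and this constraint differs for $x$ and $y$ unless their frozen coordinates agree. The construction \eqref{eq:gabe} suggests the right coupling: realize both conditional laws through a shared Gaussian-type vector $(G_i, i\in A)$, normalized by the appropriate (different) scaling factors; one then needs to show this scaling mismatch contributes a cost controlled by the coordinates where $x$ and $y$ already differ, so that only pairs $\{i,j\}$ \emph{both} outside $A$ can contribute -- which is exactly what produces the weight $\theta_{\star\star} = \min_{i<j}\sum_{A\supset\{i,j\}}\theta_A$ after summing against $\theta$, just as in Theorem~\ref{th:permutations}.

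Given the sectional bound with $\ell_A\equiv 1$, the \emph{average curvature} condition \eqref{assume:contractive} should follow by the same bookkeeping as in Remarks~\ref{rem:product}: summing $\sum_A \theta_A W_1(T_A(x,\cdot),T_A(y,\cdot)) \le \sum_A \theta_A W_\infty(\cdots)$, each coordinate-pair $\{i,j\}$ in which $x$ and $y$ differ is ``untouched'' exactly by those blocks $A$ not containing both $i$ and $j$, contributing a factor $(1 - \sum_{A\supset\{i,j\}}\theta_A) \le (1-\theta_{\star\star})$; one must check that the geodesic metric is additive enough over such coordinate-pairs for this telescoping to go through, which is where the precise definition of $\dist$ matters. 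The \emph{regularity} condition (iii) will not hold for a continuous metric, so -- exactly as flagged in Remark~\ref{rk:regul} -- I would introduce a regularizing semigroup $(P_\tau)$ on $\bbS^{n-1}_p$ (a heat-type flow on the sphere, or a noise operator built from the Gaussian representation \eqref{eq:gabe}), check the three bullet points of Remark~\ref{rk:regul} (continuity, regularity, and approximate non-negative sectional curvature for $P_\tau$ and $P_\tau^\star$), apply the theorem to $P_\tau T_A$, and let $\tau\to 0$. I expect the main obstacle to be the sectional curvature estimate for the resampling operators under the constraint $\|x\|_p=1$: controlling the transport cost induced by the mismatch in the normalizing constants $1 - S_p(x_{\bA})$ versus $1 - S_p(y_{\bA})$, and doing so uniformly for all $p>0$ (not just the convex range $p\ge 1$ where the sphere has favorable curvature), is precisely why this is stated as a conjecture rather than a theorem; the Gaussian representation gives a promising coupling, but verifying that it achieves the sharp cost -- with no loss in the constant -- is the delicate step, and may require either a cleverer metric or a more hands-on analysis of the one-dimensional radial marginals $S_p(G)$.
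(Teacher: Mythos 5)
First, note that the statement you are proving is labelled in the paper as a \emph{conjecture}: the paper itself does not prove it, and neither does your proposal. What the paper does supply (in the section on $\ell^n_p$--spheres) is exactly the first two thirds of your plan, carried out in full and with a concrete metric that you leave unspecified: it takes $\dist(x,y)=\sum_{i=1}^n\bigl||x_i|^p-|y_i|^p\bigr|+\sum_{i=1}^n{\bf 1}_{\{x_iy_i<0\}}$ and computes $W_\infty\bigl(T_A(x,\cdot),T_A(y,\cdot)\bigr)$ \emph{exactly}, using precisely the coupling you suggest: for $i\in A$ set $X_i=(\|x_A\|_p/\|Z_A\|_p)Z_i$ and $Y_i=(\|y_A\|_p/\|Z_A\|_p)Z_i$ with a shared $Z\sim\PP$, exploiting the resampling property of the cone measure. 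The normalization mismatch you worry about contributes exactly $\bigl|\sum_{i\in A}(|x_i|^p-|y_i|^p)\bigr|$, and a short computation with the probability vectors $\mu_i\propto(|x_i|^p-|y_i|^p)_+$ and $\nu_i\propto(|y_i|^p-|x_i|^p)_+$ shows that the average-curvature constant in Assumption (ii) is exactly $\theta_{\star\star}$, uniformly in $p>0$. So the step you single out as ``the delicate step'' --- the sharp curvature bound under the constraint $\|x\|_p=1$ --- is in fact the part that works cleanly; you have misplaced the difficulty.

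The genuine gap, which both your proposal and the paper leave open, is the regularity hypothesis (iii) and its replacement via Remark \ref{rk:regul}. You gesture at ``a heat-type flow on the sphere, or a noise operator built from the Gaussian representation,'' but the whole point of Remark \ref{rk:regul} is that one must \emph{verify} that such a kernel $P_\tau$ (and $P_\tau^\star$) has approximately non-negative sectional curvature with respect to this particular, rather singular metric, and the paper reports explicitly that the natural diffusion on $\bbS^{n-1}_p$ does not appear to satisfy this. Without that verification Theorem \ref{th:main} cannot be invoked and the argument does not close; this is exactly why the statement is a conjecture rather than a theorem. In short: your outline coincides with the paper's partial progress, but you have not supplied the missing ingredient, and you have misidentified which step is the missing one.
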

For $p=2$ this coincides with the result in \cite[Proposition 11]{barthe2011correlation}. A related estimate for all $p>0$, under additional symmetry assumptions on $f$, was obtained in \cite[Corollary 16]{barthe2011correlation}.
Note that the above statement is formally equivalent to Theorem \ref{th:permutations}. In fact, to prove the conjecture one may proceed exactly as in our proof of  Theorem \ref{th:permutations}, namely by exhibiting a metric on $\Omega$ for which Assumptions (i) and (ii) in Theorem \ref{th:main} are satisfied with $\kappa = \theta_{\star\star}$, when the Markov operators are given by the conditional expectations  $T_A f = \EE_Af$. As we show in 
Section \ref{sec:lp}, this can be achieved by taking the special metric 
\[
\dist(x,y):=\sum_{i=1}^n\left||x_i|^p -  |y_i|^p\right| + \sum_{i=1}^n{\bf 1}_{\{x_iy_i<0\}}\,.
\]
Assumption (iii) however fails in this case. As in the case of Theorem \ref{th:gauss}, the proof of Conjecture \ref{claim} would be complete if we could establish a regularizing procedure as described  in Remark \ref{rk:regul}, but we were unable to find the needed regularizing kernels for this model, since the natural diffusion process on $\bbS^{n-1}_p$ does not seem to have nonnegatve sectional curvature with respect to our metric.  


 \paragraph{Down-up walk for uniform $n$-sets.}
 Let $\bS$ be a finite space with cardinality $N=|\bS|$. Given $n\in\{1,\dots,N-1\}$, let $\Omega=\binom{\bS}{n}$ denote set of all subsets of $\bS$ with cardinality $n$, and call $\PP$  the uniform distribution over $\Omega$.
  For a fixed  integer $k\in\{1,\dots,n\}$, we let $T_k$ denote the Markov operator associated to the following resampling step. Given $X\in\Omega$, let $X_{-k}$ denote a uniformly random subset of $X$ with cardinality $n-k$, and, given $X_{-k}$, let $X'\in\Omega$ denote a uniformly random supset of $X_{-k}$ with cardinality $n$. Thus $X'$ is obtained from $X$ by first removing $k$ elements chosen uniformly at random from $X$, thus obtaining $X_{-k}$, and then by adding $k$ elements chosen uniformly at random from $\bS\setminus X_{-k}$. The corresponding Markov operator $T_k$ is then written as 
\[
T_kf(X) = \sum_{X'\in\Omega}T_k(X,X')f(X')\,,\qquad X\in\Omega,
\] 
where $T_k(X,X')$ denotes the probability of the transition from $X$ to $X'$ as a result of the the above described step. The Markov chain associated to $T_k$ is also called the $(n\leftrightarrow n-k)$ \emph{down-up walk} on uniform $n$-sets. By symmetry, $T_k=T_k^\star$ and the walk is $\PP$-reversible. 

The $(n\leftrightarrow n-k)$ down-up walk on uniform $n$-sets is a special case of the $(n\leftrightarrow n-k)$ down-up walk on the \emph{bases of a matroid}, an extensively studied Markov chain which is known to satisfy an entropy contraction with rate $\frac{k}{n}$, for all $k$, that is, for any $f:\Omega\mapsto\R_+$, 
\[
\ent(T_k f)\le \left(1-\frac{k}{n}\right)\,\ent f\,,
\]
see \cite{cryan2021modified,anari2022entropic}. Somewhat surprisingly, as a simple consequence of our main result (Theorem \ref{th:main}), we obtain an improvement for the $(n\leftrightarrow n-k)$ down-up walk on uniform $n$-sets, and show that for any $k$ one has entropy contraction with the strictly larger rate $\frac{k}{n} + \frac{k}{N-(n-k)}\left(1- \frac{k}{n}\right)$. We formulate this in  the following slightly more general terms. 
\begin{theorem}\label{th:downup}
For any $\theta=(\theta_1,\dots,\theta_n)$, with $\theta_k\ge0$ and $\sum_{k=1}^n\theta_k=1$, for any $f:\Omega\mapsto\R_+$, 
\begin{eqnarray}
\sum_{k=1}^n\theta_k\ent(T_k f) \leq (1-\kappa)\ent f,
\label{eq:downup}
\end{eqnarray}
with $\kappa := \kappa_0(\theta) + \kappa_1(\theta)$, where 
\begin{eqnarray}
\kappa_0(\theta):=\sum_{k=1}^n\theta_k\,\frac{k}{n}\,,\quad 
\kappa_1(\theta):=\sum_{k=1}^n\theta_k\,\frac{k}{N-(n-k)}\left(1- \frac{k}{n}\right)\,.
\label{eq:downupk}
\end{eqnarray}
In particular, for any $k=1,\dots,n$, the $(n\leftrightarrow n-k)$ down-up walk for uniform $n$-sets has entropy contraction with rate $\frac{k}{n} + \frac{k}{N-(n-k)}\left(1- \frac{k}{n}\right)$. 
\end{theorem}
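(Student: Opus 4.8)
The plan is to deduce Theorem \ref{th:downup} from Theorem \ref{th:main} by choosing an appropriate metric on $\Omega = \binom{\bS}{n}$ and checking the three hypotheses for the operators $(T_1,\dots,T_n)$ with weights $(\theta_1,\dots,\theta_n)$. Since $\Omega$ is finite, Assumption (iii) is automatic, so the real work is in Assumptions (i) and (ii). The natural metric to try is $\dist(X,Y) := n - |X\cap Y| = \tfrac12|X\triangle Y|$, the ``symmetric difference'' distance on $n$-sets, which is a genuine metric and takes integer values. With this choice it suffices to verify the two curvature conditions for pairs $X,Y$ at distance $1$, i.e. $X = S\cup\{a\}$, $Y = S\cup\{b\}$ with $|S| = n-1$ and $a\ne b$, since both $W_\infty$ and $W_1$ satisfy the obvious triangle-type bounds along a geodesic and everything is linear in $\dist$ on the right-hand sides.

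First I would establish the sectional curvature bound (i): for adjacent $X,Y$ one needs a coupling of $T_k(X,\cdot)$ and $T_k(Y,\cdot)$ whose two outputs differ by at most $\ell_k$ in $\dist$, almost surely, which fixes the constant $\ell_k$. The down-step removes a uniformly random $k$-subset from the current set; I would couple the two down-steps by removing the ``same'' elements out of the common part $S$ whenever possible and handling the single discrepant element ($a$ versus $b$) by a matching that keeps the removed sets as aligned as possible. After the down-step the two sets $X_{-k}, Y_{-k}$ are either equal (if the discrepant element happened to be removed on both sides, which one arranges to be a coupled event) or still at distance $1$; in the former case the subsequent up-steps can be coupled identically and the outputs coincide, in the latter case one couples the up-steps (adding a uniformly random $k$-subset of the complement) again by adding the same elements, with a small correction because the two complements differ by one element. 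A careful bookkeeping of these cases should give $W_\infty(T_k(X,\cdot),T_k(Y,\cdot)) \le \ell_k\,\dist(X,Y)$ with a contraction factor $\ell_k = 1 - \tfrac{k}{n} - \tfrac{k}{N-(n-k)}\bigl(1-\tfrac{k}{n}\bigr)$ strictly less than $1$ — indeed the probability that the coupled chains \emph{coalesce} in one step is exactly $\kappa_k := \tfrac{k}{n} + \tfrac{k}{N-(n-k)}(1-\tfrac{k}{n})$, which is where the two terms $\kappa_0,\kappa_1$ come from: $\tfrac{k}{n}$ is the chance the discrepant element is among the $k$ removed from $X$ (likewise $Y$), and the second term accounts for the up-step matching the two complements. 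Since $T_k = T_k^\star$, the bound for $T_k^\star$ is the same.

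Next, Assumption (ii): because $T_k$ is self-adjoint, $W_1(T_k(X,\cdot),T_k(Y,\cdot)) \le W_\infty(\cdots) \le \ell_k\,\dist(X,Y)$ from step one, so
\[
\sum_{k=1}^n \theta_k\,\ell_k\,W_1\bigl(T_k(X,\cdot),T_k(Y,\cdot)\bigr) \;\le\; \Bigl(\sum_{k=1}^n\theta_k\,\ell_k^2\Bigr)\dist(X,Y),
\]
but $\ell_k^2 \le \ell_k$ and $\ell_k = 1-\kappa_k$, so the prefactor is at most $\sum_k\theta_k(1-\kappa_k) = 1 - \kappa$ with $\kappa = \sum_k\theta_k\kappa_k = \kappa_0(\theta)+\kappa_1(\theta)$, exactly as in \eqref{eq:downupk}. (If the cruder estimate $\ell_k^2\le\ell_k$ loses too much one can instead feed $\ell_i := 1$ into Theorem \ref{th:main} and put all the contraction into condition (ii), verifying $\sum_k\theta_k W_1(T_k(X,\cdot),T_k(Y,\cdot)) \le \sum_k\theta_k\ell_k\,\dist(X,Y) = (1-\kappa)\dist(X,Y)$ directly from the coalescence coupling; this is in fact the cleaner route.) Theorem \ref{th:main} then yields $\sum_k\theta_k\,\ent(T_k^\star f) \le (1-\kappa)\ent(f)$, and using $T_k^\star = T_k$ gives \eqref{eq:downup}. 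The specialization to $\theta = \delta_k$ gives the claimed rate $\tfrac{k}{n}+\tfrac{k}{N-(n-k)}(1-\tfrac{k}{n})$.

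I expect the main obstacle to be the explicit construction and verification of the $W_\infty$ coupling in step one — in particular pinning down the exact coalescence probability $\kappa_k$ and confirming that on the non-coalescing event the two chains stay at distance exactly $1$ (rather than drifting to distance $2$), which requires the down-step and up-step couplings to be chosen compatibly and some care in the up-step where the two complements $\bS\setminus X_{-k}$ and $\bS\setminus Y_{-k}$ differ by a single element. Once that combinatorial coupling lemma is in hand, the rest is a routine application of the general machinery. A secondary point worth checking is that the bound $\kappa_k < 1$ strictly (so $\ell_k \in (0,1)$ and no degeneracy occurs), which holds since $k \le n$ and $N - (n-k) \ge k+1 > k$ under $n \le N-1$.
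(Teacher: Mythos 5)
Your parenthetical ``cleaner route'' is exactly the paper's proof: same metric $\dist(X,Y)=n-|X\cap Y|$, same path-coupling reduction to $\dist(X,Y)=1$, same two-stage coupling (coalesce with probability $\tfrac{k}{n}$ at the down-step, and with probability $\tfrac{k}{N-(n-k)}$ at the up-step otherwise, staying at distance $1$ on the complementary event), and the same way of feeding the result into Theorem \ref{th:main}: $\ell_k=1$ via $W_\infty(T_k(X,\cdot),T_k(Y,\cdot))\le\dist(X,Y)$ for Assumption (i), with all the contraction placed in the $W_1$ bound of Assumption (ii). That route is correct and complete once the coupling is written out.

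Your \emph{primary} route, however, contains a genuine error that you should not leave in: the claim $W_\infty\bigl(T_k(X,\cdot),T_k(Y,\cdot)\bigr)\le \ell_k\,\dist(X,Y)$ with $\ell_k=1-\kappa_k<1$ is impossible. $W_\infty$ is an \emph{essential supremum} over the coupling, so a coupling that fails to coalesce with positive probability $1-\kappa_k$ and leaves the chains at distance $1$ on that event gives $\mathrm{ess\,sup}\,\dist(X',Y')=1$, not $1-\kappa_k$; indeed, since the metric is integer-valued and $T_k(X,\cdot)\ne T_k(Y,\cdot)$ for $k<n$, \emph{every} coupling has $W_\infty\ge 1$. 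The coalescence probability can only be harvested through $W_1$ (the expected distance), never through $W_\infty$. Consequently the subsequent bookkeeping with $\ell_k^2\le\ell_k$ is moot. Delete the first route and keep the second; with the coupling lemma made explicit (in particular the verification that $Y_{-k}:=(X_{-k}\setminus\{x\})\cup\{y\}$ and $Y':=(X'\setminus\{x\})\cup\{y\}$ have the correct marginals, which follows from the $x\leftrightarrow y$ symmetry), the argument matches the paper's Lemma \ref{lem:partialperm} and the proof goes through.
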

The proof is based on showing that Assumptions (i) and (ii) of Theorem \ref{th:main} are satisfied with the required constants. In Section \ref{sec:downup} we provide the simple coupling argument for these curvature bounds. 

 \paragraph{Langevin diffusion in a convex potential.} 
 Finally, we consider the  $n-$dimensional Langevin diffusion in a convex potential, that is the stochastic differential equation
\begin{eqnarray}
\label{def:langevin}
\dd X_t & = & -\nabla V(X_t)\dd t+\sqrt{2}\dd B_t,
\end{eqnarray}
where $B=(B_t)_{t\ge 0}$ is a standard $n-$dimensional Brownian motion and $V\colon\R^n\to\R$   a smooth (say, twice continuously differentiable) function, which is assumed to be $\rho-$convex:
\begin{eqnarray}
\label{assume:convex}
\mathrm{Hess}(V) & \ge & \rho\,\mathrm{Id},
\end{eqnarray}
for some constant $\rho>0$.
The classical theory ensures that there is a unique strong solution $X=(X_t)_{t\ge 0}$ starting from any fixed condition $x\in\R^n$, and that the formula $(P_tf)(x)  :=  \EE_x\left[f(X_t)\right]$ defines a self-adjoint Markov semi-group $(P_t)_{t\ge 0}$ on $L^2(\Omega,\cF,\PP)$, where
\begin{eqnarray}
\Omega=\R^n,\qquad \cF=\cB(\R^n),\qquad \PP({\rm d}x) & \propto & e^{-V(x)}\dd x.
\end{eqnarray}
The following fundamental result on the relative entropy decay for Langevin diffusions is one of the most emblematic applications of the Bakry-\'Emery theory. It is equivalent to a log-Sobolev inequality for the measure $\PP$, with optimal constant in the case where $V$ is quadratic, see e.g.\ \cite{bakry2014analysis}. 
\begin{theorem}
\label{th:BE}
Under assumption (\ref{assume:convex}), one has
\begin{eqnarray*}
\ent(P_t f) & \le & e^{-2\rho t}\,\ent(f),
\end{eqnarray*}
for all times $t\ge 0$ and all functions $f\in L\log L$. 
\end{theorem}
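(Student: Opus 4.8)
The goal is to derive Theorem~\ref{th:BE} from the main curvature criterion, Theorem~\ref{th:main}. The idea is to discretize time: fix a small step $h>0$ and apply Theorem~\ref{th:main} with $M=1$, $\theta_1=1$, and $T_1 = P_h$, the Langevin semigroup at time $h$. Since $P_h$ is self-adjoint, $T_1^\star = P_h$ as well. Assumption (iii) (regularity) is the kind of thing that can be handled by the Ornstein--Uhlenbeck regularization of Remark~\ref{rk:regul}, or directly from smoothing properties of the diffusion semigroup; I would not dwell on it. The substance is Assumptions (i) and (ii), which for $M=1$ and a self-adjoint kernel both reduce to a single statement: for $x,y$ outside a null set,
\begin{eqnarray*}
W_\infty\left(P_h(x,\cdot),P_h(y,\cdot)\right) & \le & \ell\,\dist(x,y), \qquad \ell\,W_1\left(P_h(x,\cdot),P_h(y,\cdot)\right) \le (1-\kappa)\dist(x,y),
\end{eqnarray*}
on $\R^n$ with the Euclidean metric. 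Combining these, it suffices to produce a synchronous coupling of the two diffusions started at $x$ and $y$ that contracts the distance: if $X_t$, $Y_t$ solve \eqref{def:langevin} driven by the \emph{same} Brownian motion $B$, from initial conditions $x,y$, then $\tfrac{\dd}{\dd t}|X_t-Y_t|^2 = -2\langle X_t-Y_t,\nabla V(X_t)-\nabla V(Y_t)\rangle \le -2\rho|X_t-Y_t|^2$ by $\rho$-convexity \eqref{assume:convex}, so almost surely $|X_h-Y_h| \le e^{-\rho h}|x-y|$. This single pathwise bound gives both a $W_\infty$ contraction with ratio $\ell = e^{-\rho h}$ \emph{and}, since $W_1 \le W_\infty$, the average-curvature bound with $\ell \cdot e^{-\rho h} = e^{-2\rho h} = 1-\kappa$, i.e.\ $\kappa = 1-e^{-2\rho h}$.

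Theorem~\ref{th:main} then yields $\ent(P_h f) \le e^{-2\rho h}\,\ent(f)$ for all $f\in L\log L$. The semigroup property $P_{mh} = (P_h)^m$ gives $\ent(P_{mh} f) \le e^{-2\rho m h}\,\ent(f)$ for every integer $m\ge 0$. To pass from the discrete set of times $\{mh\}$ to all $t\ge 0$, I would either let $h\to 0$ along $h = t/m$ with $m\to\infty$ (the bound $e^{-2\rho m h} = e^{-2\rho t}$ is already exact, so no limit is even needed once $h$ divides $t$), or invoke monotonicity of $t\mapsto \ent(P_t f)$ together with density of the dyadic times; in any case the constant $e^{-2\rho t}$ comes out without loss. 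This reproduces the statement of Theorem~\ref{th:BE}.

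\textbf{Main obstacle.} The one genuinely non-routine point is the regularity Assumption (iii): for general smooth convex $V$ the kernel $P_h$ need not map $L^\infty$ into Lipschitz functions with a \emph{finite} Lipschitz constant in the sense required, so one must either argue that the Gaussian-type smoothing inherent in the diffusion already provides it (e.g.\ via the Bismut--Elworthy--Li formula, which bounds $\|\nabla P_h f\|_\infty$ by $C(h)\|f\|_\infty$ precisely under Hess$(V)\ge\rho\,\mathrm{Id}$), or implement the Ornstein--Uhlenbeck regularization of Remark~\ref{rk:regul}: apply Theorem~\ref{th:main} to $P_\tau P_h$ and send $\tau\to 0$. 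One should check that the OU semigroup has non-negative sectional curvature with respect to the Euclidean metric, which it does since its synchronous coupling contracts distances; composing with $P_h$ preserves the contraction estimates by the triangle inequality for $W_\infty$ and $W_1$. Everything else --- the synchronous coupling computation, the self-adjointness of $P_h$, and the iteration --- is immediate, so this proof is essentially a two-line Grönwall estimate fed into the general machinery, which is exactly the ``simple, coupling-based proof'' advertised in the abstract.
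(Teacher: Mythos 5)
Your proposal is correct and follows essentially the same route as the paper: a synchronous coupling plus the Grönwall estimate gives $W_\infty(P_t(x,\cdot),P_t(y,\cdot))\le e^{-\rho t}\|x-y\|$, and then Theorem~\ref{th:main} with $M=1$, $\ell=e^{-\rho t}$, $\kappa=1-e^{-2\rho t}$ (using $P_t^\star=P_t$ and $W_1\le W_\infty$) yields the claim; the paper handles Assumption~(iii) by citing the standard regularity of the semigroup. Your time-discretization and iteration step is superfluous — as you yourself note, applying the theorem once at time $t$ already gives the exact constant.
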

In Section \ref{sec:langevin} we show how to use our framework to obtain an elementary probabilistic proof of Theorem \ref{th:BE}.

\bigskip

 \begin{ackno} {\em 
We thank Thomas Courtade for helpful conversations on the B-L inequalities, as well as Francesco Pedrotti and Sam Power for many relevant comments and references.  
P.C. warmly thanks the research center CEREMADE at Paris Dauphine for the kind hospitality. }
 \end{ackno}
 
\section{Proof of Theorem \ref{th:main}}
\subsection{Step 1: Lipschitz contraction}

Our first step consists in converting the curvature assumptions into a crucial Lipschitz contraction property for the 
non-linear operator $\Lambda\colon L^\infty\to L^\infty$ defined by
\begin{eqnarray*}
\Lambda f & := & \sum_{i=1}^M\theta_i T_i\log T_i^\star\exp f.
\end{eqnarray*}

\begin{proposition}[Lipschitz contraction]\label{pr:lip}Under Assumptions $(i)-(ii)$, we have
\begin{eqnarray}
\label{assume:lip}
\forall f\in L^\infty,\qquad \Lip\left(\Lambda f\right) & \le & (1-\kappa)\,\Lip(f).
\end{eqnarray}
 \end{proposition}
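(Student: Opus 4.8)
The plan is to deduce the contraction from the two curvature hypotheses in two stages: Assumption~(i) will control how the ``exponential averaging'' map $f\mapsto \log T_i^\star e^{f}$ distorts the essential Lipschitz constant, and Assumption~(ii) will then control the linear averaging $\sum_i\theta_i T_i(\cdot)$ that follows. One may assume $L:=\Lip(f)<\infty$, the inequality being vacuous otherwise, and fix a null set $N_f$ outside which $f$ is genuinely $L$-Lipschitz. Set $g_i:=\log T_i^\star e^{f}$, which lies in $L^\infty$ since $e^{-\|f\|_\infty}\le T_i^\star e^{f}\le e^{\|f\|_\infty}$, so that $\Lambda f=\sum_{i=1}^M\theta_i T_i g_i$.

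For Step~1, I would show $\Lip(g_i)\le \ell_i L$. The key point is that $T_i^\star$ is measure preserving, so $\EE[T_i^\star\1_{N_f}]=\PP(N_f)=0$ and hence $T_i^\star(x,N_f)=0$ for $x$ outside a null set. For $x,y$ outside the null union of that set with the exceptional set in Assumption~(i), and for any $\varepsilon>0$, I pick a coupling $(Z,Z')$ of $T_i^\star(x,\cdot),T_i^\star(y,\cdot)$ with $\dist(Z,Z')\le \ell_i\dist(x,y)+\varepsilon$ almost surely; then $Z,Z'\notin N_f$ a.s., hence $e^{f(Z')}\le e^{f(Z)}e^{L(\ell_i\dist(x,y)+\varepsilon)}$ a.s., and integrating and taking logarithms gives $g_i(y)-g_i(x)\le L(\ell_i\dist(x,y)+\varepsilon)$; letting $\varepsilon\to0$ and exchanging $x,y$ yields the claim.

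For Step~2, let $N_{g_i}$ be a null set outside which $g_i$ is $\ell_iL$-Lipschitz; again by stationarity of $T_i$ one has $T_i(x,N_{g_i})=0$ for $x$ outside a null set. For $x,y$ outside the null union of all these exceptional sets with the one in Assumption~(ii), and for an arbitrary coupling $(W,W')$ of $T_i(x,\cdot),T_i(y,\cdot)$, the pair lies in $(\Omega\setminus N_{g_i})^2$ a.s., so $g_i(W)-g_i(W')\le \ell_iL\,\dist(W,W')$; taking expectations and infimizing over couplings yields $(T_ig_i)(x)-(T_ig_i)(y)\le \ell_i L\, W_1\big(T_i(x,\cdot),T_i(y,\cdot)\big)$. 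Summing against $\theta_i$ and invoking Assumption~(ii),
\[
(\Lambda f)(x)-(\Lambda f)(y)=\sum_{i=1}^M\theta_i\big[(T_ig_i)(x)-(T_ig_i)(y)\big]\le L\sum_{i=1}^M\theta_i\ell_i\, W_1\big(T_i(x,\cdot),T_i(y,\cdot)\big)\le(1-\kappa)L\,\dist(x,y),
\]
and exchanging $x,y$ and taking the supremum over such $x,y$ gives $\Lip(\Lambda f)\le(1-\kappa)\Lip(f)$.

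The arithmetic is light; the main obstacle will be the measure-theoretic bookkeeping—upgrading the essential Lipschitz bounds to genuine ones on the complement of an explicit null set, and ensuring the $W_\infty$- and $W_1$-couplings used in Steps~1 and~2 are supported away from those null sets. This is exactly where the measure-preserving property of each $T_i$ and $T_i^\star$ enters (through $\EE[T\1_N]=\PP(N)$), and one must also recall that a $W_\infty$-optimal coupling need only be realized up to an arbitrary $\varepsilon>0$. Note that Assumption~(iii) plays no role here; it is only needed later in the proof of Theorem~\ref{th:main}.
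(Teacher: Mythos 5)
Your proof is correct and follows essentially the same two-step argument as the paper: Assumption (i) together with an almost-sure $W_\infty$-coupling yields $\Lip(\log T_i^\star e^f)\le \ell_i\Lip(f)$ via the exponentiate--integrate--take-logs step, and Assumption (ii) with $W_1$-couplings then handles the outer averaging, with the null-set bookkeeping managed exactly as in the paper through the measure-preserving identity $\EE[T\1_N]=\PP(N)$. The only (harmless) difference is your extra $\varepsilon$-slack in realizing the $W_\infty$ bound, where the paper takes an exact almost-sure coupling for granted.
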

 We start with an elementary remark that will be used several times throughout the proof. 
\begin{claim}[Null sets]\label{claim:null}If $T$ is a measure-preserving transition kernel on $(\Omega,\cF,\PP)$, and if $N\subset\Omega$ is a null set, then so is $\left\{x\in\Omega\colon T(x,N)>0\right\}$. 
\end{claim}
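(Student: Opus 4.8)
The statement to prove is Claim~\ref{claim:null}: if $T$ is a measure-preserving transition kernel on $(\Omega,\cF,\PP)$ and $N$ is a null set, then $\{x : T(x,N)>0\}$ is also a null set.

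The plan is to exploit the stationarity identity $\EE[Tf]=\EE[f]$ applied to the indicator $f = \mathbf{1}_N$. First I would note that, by the correspondence between the kernel and the operator recalled in the paper, $(T\mathbf{1}_N)(x) = T(x,N)$, and this is a nonnegative measurable function of $x$ (measurability is property (ii) of a transition kernel). Since $N$ is a null set, $\EE[\mathbf{1}_N]=\PP(N)=0$, and measure-preservation gives
\begin{eqnarray*}
\EE\left[T(\cdot,N)\right] \;=\; \EE\left[T\mathbf{1}_N\right] \;=\; \EE\left[\mathbf{1}_N\right] \;=\; 0.
\end{eqnarray*}
Thus $T(\cdot,N)$ is a nonnegative measurable function with zero expectation, hence $T(\cdot,N)=0$ $\PP$-almost everywhere, which is precisely the statement that $\{x : T(x,N)>0\}$ is a null set.

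There is essentially no obstacle here: the only subtle point to mention is that $\mathbf{1}_N\in L^\infty$ so that the stationarity relation \eqref{def:statio} applies directly, and that $T(\cdot,N)$ being measurable justifies speaking of the measurability of the event $\{x : T(x,N)>0\}$ in the first place. I would keep the argument to these few lines.
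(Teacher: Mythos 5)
Your argument is correct and is exactly the paper's proof: apply the stationarity identity \eqref{def:statio} to $f=\mathbf{1}_N$ to get $\EE[T(\cdot,N)]=\PP(N)=0$, then conclude since a nonnegative measurable function with zero expectation vanishes almost everywhere. The additional remarks on measurability are fine but not needed.
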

\begin{proof}
Simply choose $f={\bf 1}_N$ in (\ref{def:statio}) to obtain
$
\int_\Omega T(x,N)\PP(\dd x)  =  \PP(N)  =  0.
$
\end{proof}
\begin{proof} [Proof of Proposition \ref{pr:lip}]
Fix $f\in L^\infty$. By definition, there is a null set $N\subset\Omega$ such that
\begin{eqnarray*}
\forall (x,y)\in(\Omega\setminus N)^2,\qquad |f(x)-f(y)| & \le & \Lip(f)\dist(x,y).
\end{eqnarray*}
Claim \ref{claim:null} ensures that 
$
N' :=  \bigcup_{i=1}^M\left\{x\in\Omega\colon T_i^\star(x,N)>0\right\}
$
is also a null set. Upon enlarging it if necessary, we may assume that $N'$ also contains the null sets appearing in Assumption (i). Now, fix two points $x,y\in\Omega\setminus N'$, and an index $1\le i\le n$. By assumption, there is a coupling $(X^\star,Y^\star)$ of $T_i^\star(x,\cdot)$ and $T_i^\star(y,\cdot)$ such that almost-surely,
$
\dist(X^\star,Y^\star)\le \ell_i\dist(x,y)$ and $X^\star,Y^\star\notin N$. In particular, this implies  that almost-surely, 
\begin{eqnarray*}
f(X^\star) & \le & f(Y^\star)+\ell_i\Lip(f)\dist(x,y),
\end{eqnarray*} 
Taking exponentials, then expectations, and then logarithms, we arrive at the key inequality
\begin{eqnarray}
\label{step1}
(\log T_i^\star \exp f)(x) & \le & (\log T_i^\star \exp f)(y)+\ell_i\Lip(f)\dist(x,y),
\end{eqnarray}
valid for any $1\le i\le M$ and any  $x,y\in \Omega\setminus N' $. Now, invoking Claim \ref{claim:null} again, we know that $N'' :=  \bigcup_{i=1}^M\left\{x\in\Omega\colon T_i(x,N')>0\right\}
$ is a null set and, upon enlarging it if needed, we may assume that it contains the null set appearing in Assumption (ii). Consider a pair $(x,y)\in (\Omega\setminus N'')^2$, and let $(X,Y)$ be any coupling  of $T_i(x,\cdot)$ and $T_i(y,\cdot)$. Then, $X,Y$ are both in $\Omega\setminus N'$ almost-surely, so that (\ref{step1}) holds almost-surely with $x,y$ replaced by $(X,Y)$. Taking expectations, we obtain
\begin{eqnarray*}
(T_i\log T_i^\star \exp f)(x) & \le & (T_i\log T_i^\star \exp f)(y) +\ell_i\Lip(f)\EE\left[\dist(X,Y)\right].
\end{eqnarray*}
Since this is true for any choice of the coupling $(X,Y)$,  we may  replace $\EE\left[\dist(X,Y)\right]$ with $W_1\left(T_i(x,\cdot),T_i(y,\cdot)\right)$. Finally, we simply multiply through by $\theta_i$ and sum over $1\le i\le M$. In view of Assumption (ii) and our definition of $\Lambda$, we obtain
\begin{eqnarray*}
(\Lambda f)(x) & \le & ( \Lambda f)(y) +(1-\kappa)\,\Lip(f)\dist(x,y).
\end{eqnarray*}
Since this is true for any $x,y\in\Omega\setminus N''$, the  claim is proved. 
\end{proof}

\subsection{Step 2: variance contraction}

With Proposition \ref{pr:lip} on hand, the proof of Theorem \ref{th:main} boils down to establishing that the Lipschitz contraction $\Lip\left(\Lambda f\right)  \le  (1-\kappa)\Lip(f)$ and the regularity assumption (iii) together imply the desired entropy contraction (\ref{eq:main}). We first establish a weaker statement, obtained by replacing entropies with variances.

\begin{proposition}[Variance contraction]\label{pr:variance}Under Assumption (iii), the contraction (\ref{assume:lip}) implies
\begin{eqnarray}
\forall f\in L^2,\qquad \sum_{i=1}^M\theta_i\var\left(T_i^\star f\right) & \le & (1-\kappa)\var\left(f\right).
\end{eqnarray}
\end{proposition}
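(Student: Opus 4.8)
The plan is to linearize the entropy contraction around constants: replacing $f$ by $\mathbf{1}+\varepsilon g$ for a bounded $g$ and letting $\varepsilon\to 0$ turns the entropy functional $\ent(\cdot)$ into (a multiple of) the variance functional, while the Lipschitz contraction for the non-linear operator $\Lambda$ degenerates into a Lipschitz contraction for the linearized operator $g\mapsto \sum_i\theta_i T_iT_i^\star g$. More precisely, I would first record the elementary expansions $\log(\mathbf{1}+\varepsilon g)=\varepsilon g-\tfrac{\varepsilon^2}{2}g^2+o(\varepsilon^2)$ and $\exp(\varepsilon h)=\mathbf{1}+\varepsilon h+\tfrac{\varepsilon^2}{2}h^2+o(\varepsilon^2)$ in $L^\infty$, so that $T_i^\star\exp(\varepsilon g)=\mathbf{1}+\varepsilon T_i^\star g+\tfrac{\varepsilon^2}{2}T_i^\star(g^2)+o(\varepsilon^2)$ and hence $\log T_i^\star\exp(\varepsilon g)=\varepsilon T_i^\star g+\tfrac{\varepsilon^2}{2}\big(T_i^\star(g^2)-(T_i^\star g)^2\big)+o(\varepsilon^2)$. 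Therefore $\Lambda(\varepsilon g)=\varepsilon L g+O(\varepsilon^2)$ where $L:=\sum_{i=1}^M\theta_i T_iT_i^\star$.

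Next I would extract the linearized Lipschitz bound. From Proposition \ref{pr:lip}, $\Lip(\Lambda(\varepsilon g))\le (1-\kappa)\,\Lip(\varepsilon g)=(1-\kappa)\varepsilon\,\Lip(g)$; dividing by $\varepsilon$ and sending $\varepsilon\to 0$ I want to conclude $\Lip(Lg)\le(1-\kappa)\Lip(g)$ for all $g\in L^\infty$ with $\Lip(g)<\infty$. The subtlety here is that $\Lip$ is not continuous under $L^\infty$ convergence in general, so I cannot simply pass to the limit in the inequality; instead I would argue directly on the defining difference quotients: for $x,y$ outside the relevant null set, $\tfrac{1}{\varepsilon}\big((\Lambda(\varepsilon g))(x)-(\Lambda(\varepsilon g))(y)\big)\le (1-\kappa)\Lip(g)\dist(x,y)$, and the left-hand side converges pointwise (for fixed $x,y$) to $(Lg)(x)-(Lg)(y)$ by the expansion above, because $T_i$ and $T_i^\star$ applied to $L^\infty$ functions evaluate pointwise at a fixed point via the kernels. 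This gives the linearized contraction $\Lip(Lg)\le(1-\kappa)\Lip(g)$. (Note Assumption (iii) guarantees $\Lip(T_if)<\infty$, which is what makes $\Lip(Lg)$ finite and the whole computation non-vacuous; without it the statement $\Lip(Lg)\le(1-\kappa)\Lip(g)$ could be $\infty\le$ something.)

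Then I would convert the linearized Lipschitz contraction into the variance contraction. Since $L=\sum_i\theta_i T_iT_i^\star$ and $T_iT_i^\star$ is a self-adjoint Markov operator (being $T_i$ composed with its adjoint), $L$ is a self-adjoint measure-preserving Markov operator, and the desired inequality $\sum_i\theta_i\var(T_i^\star f)\le(1-\kappa)\var(f)$ reads exactly $\EE[f(\mathbf{1}-L)f]\ge\kappa\,\var(f)$, i.e. $\langle f,(\mathrm{Id}-L)f\rangle\ge\kappa\,\var(f)$ for all $f\in L^2$. Equivalently, writing $\langle\cdot,\cdot\rangle$ for the $L^2$ inner product, I need $\langle f, Lf\rangle\le(1-\kappa)\var(f)$ on the orthocomplement of constants. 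The mechanism is: a self-adjoint Markov operator $L$ whose associated semigroup (or $L$ itself) contracts Lipschitz seminorm by $(1-\kappa)$ has spectral radius at most $(1-\kappa)$ on mean-zero functions — this is a standard duality between $W_1$-contraction of the kernel and Lipschitz-contraction of the operator (Kantorovich–Rubinstein), combined with the fact that $\Lip$-contraction at the level of the "carré-du-champ" controls the $L^2$ Dirichlet form. Concretely I would run the telescoping/semigroup argument: for the continuous-time semigroup $(e^{-t(\mathrm{Id}-L)})_{t\ge0}$, differentiate $\var(e^{-t(\mathrm{Id}-L)}f)$ in time, bound its derivative using the $\Lip$-contraction of $L$ via a reverse-Poincaré or Cauchy–Schwarz step, and integrate; alternatively, since this is the linear/Gaussian shadow of the main entropy argument, I expect the paper to mimic in the variance setting the very same "Step 2 $\to$ Step 3" passage used for entropy. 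The \textbf{main obstacle} I anticipate is precisely this last implication — deducing an $L^2$ (variance) contraction from a purely $L^\infty$ (Lipschitz) contraction of a self-adjoint operator — since it requires a genuine functional-analytic input (e.g. an integration-by-parts/semigroup interpolation or a spectral argument using that Lipschitz functions are dense-enough and that $\mathrm{Id}-L$ controls gradients), rather than the soft linearization steps. Everything else (the Taylor expansions, the pointwise passage to the limit in difference quotients, and the identification $L=\sum_i\theta_iT_iT_i^\star$) is routine.
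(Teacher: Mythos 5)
Your first half coincides with the paper's proof: after normalizing $f$ (the paper takes $f\in L^\infty$, $\EE[f]=0$, $\EE[f^2]=1$), one expands $\tfrac{1}{\varepsilon}\Lambda(\varepsilon f)\to Tf$ with $T:=\sum_{i=1}^M\theta_iT_iT_i^\star$, and the Lipschitz contraction (\ref{assume:lip}) passes to the limit to give $\Lip(Tf)\le(1-\kappa)\Lip(f)$. Your treatment of this limit via pointwise difference quotients is fine (indeed slightly more careful than the paper's one-line ``$T$ inherits the contraction''), and your identification of the target inequality as $\langle f,Tf\rangle\le 1-\kappa$ for the self-adjoint non-negative Markov operator $T$ is also correct.

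The genuine gap is the step you yourself flag as the ``main obstacle'': you never actually derive the $L^2$ bound from the $L^\infty$ Lipschitz contraction. The mechanisms you gesture at do not work as stated. Kantorovich--Rubinstein duality only exchanges $W_1$-contraction of the kernel for Lipschitz-contraction of the operator --- it stays entirely on the $L^\infty/W_1$ side and says nothing about $\langle f,Tf\rangle$. Differentiating $\var(e^{-t(\mathrm{Id}-T)}f)$ gives $-2\langle P_tf,(\mathrm{Id}-T)P_tf\rangle$, and lower-bounding this by $2\kappa\,\var(P_tf)$ is precisely the Poincar\'e inequality you are trying to prove, so that route is circular. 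The paper's actual argument is an iteration-plus-spectral-theorem trick that you are missing. Since $T$ is non-negative and self-adjoint, the spectral theorem gives a probability measure $\mu$ on $[0,1]$ with $\int_0^1\lambda^n\mu(\dd\lambda)=\langle f,T^nf\rangle$; in particular $\int_0^1\lambda^{2n}\mu(\dd\lambda)=\var(T^nf)$. One then bounds
\begin{equation*}
\var(T^nf)\;=\;\frac12\int_{\Omega^2}\bigl[(T^nf)(x)-(T^nf)(y)\bigr]^2\,\PP(\dd x)\PP(\dd y)\;\le\;\frac{\Lip(T^nf)^2}{2}\int_{\Omega^2}\dist^2(x,y)\,\PP(\dd x)\PP(\dd y),
\end{equation*}
iterates the Lipschitz contraction to get $\Lip(T^nf)\le(1-\kappa)^{n-1}\Lip(Tf)$, and uses Assumption (iii) (finiteness of $\Lip(Tf)$) together with the square-integrability (\ref{assume:squareint}) of the metric to make the right-hand side finite. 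Taking $(2n)$-th roots and letting $n\to\infty$ forces $\mathrm{supp}(\mu)\subset[0,1-\kappa]$, whence $\langle f,Tf\rangle=\int_0^1\lambda\,\mu(\dd\lambda)\le 1-\kappa$. Note that a single application of the Lipschitz bound is useless here because $\Lip(f)$ does not control $\var(f)$; it is the combination of the $n$-fold iteration with the rigidity of the moment sequence $\int\lambda^{2n}\dd\mu$ that converts the asymptotic $L^\infty$ decay rate into a bound on the top of the spectrum, and hence on the single quantity $\int\lambda\,\dd\mu$ that you need.
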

\begin{proof}Without loss of generality, we assume that $f\in L^\infty$ and that $\EE[f]=0$ and $\EE[f^2]=1$. Using the uniform approximation $e^{\varepsilon f}=1+\varepsilon f+o(\varepsilon)$ as $\varepsilon\to 0$, we have 
\begin{eqnarray*}
\frac{1}{\varepsilon}\Lambda(\varepsilon f) & \xrightarrow[\varepsilon\to 0]{L^2} &  Tf,
\end{eqnarray*}
where we have introduced the operator $T:= \sum_{i=1}^M\theta_iT_iT_i^\star$. It follows that $T$ inherits the Lipschitz contraction property (\ref{assume:lip}) from $\Lambda$, i.e. 
\begin{eqnarray}
\label{assume:weaklip}
\Lip\left(T f\right) & \le & (1-\kappa)\,\Lip(f).
\end{eqnarray}
Now, $T$ is clearly a non-negative self-adjoint Markov operator on the Hilbert space $L^2$, so the spectral theorem provides us with a Borel probability measure $\mu$ on $[0,1]$ such that
\begin{eqnarray}
\forall n\in\N,\qquad  \int_{0}^{1} \lambda^n\mu({\rm d}\lambda) & =&  \langle f,T^n f\rangle.
\end{eqnarray}
Iterating the Lipschitz contraction (\ref{assume:weaklip}), we find
\begin{eqnarray*}
 \int_{0}^{1} \lambda^{2n}\mu({\rm d}\lambda)  & = & \var(T^n f) \\ & = & \frac{1}{2}\int_{\Omega^2}\left[(T^nf)(x)-(T^nf)(y)\right]^2\PP(dx)\PP(dy)\\
 & \le & \frac{\Lip(T^nf)^2}{2}\int_{\Omega^2}\dist^2(x,y)\PP(dx)\PP(dy)\\
 & \le & (1-\kappa)^{2n-2}\,\frac{\Lip(Tf)^2}{2}\,\int_{\Omega^2}\dist^2(x,y)\PP(dx)\PP(dy).
\end{eqnarray*}
Note that the right-hand side is finite thanks to Assumption (iii) and (\ref{assume:squareint}). We may thus safely raise both sides to the power $1/(2n)$ and send $n\to\infty$ to conclude that  the measure $\mu$ is actually supported on $[0,1-\kappa]$. In particular, we have
\begin{eqnarray*}
\langle f,T f\rangle  & = & \int_{0}^{1} \lambda\mu({\rm d}\lambda) \ \le \ 1-\kappa.
\end{eqnarray*}
By definition of $T$, the left-hand side is exactly $\sum_{i=1}^M\theta_i\var(T_i^\star f)$, and the claim is proved.
\end{proof}
\subsection{Step 3: entropy contraction}
We henceforth fix a parameter $m\in(0,\infty)$ and consider the compact set $K\subset L^2$ formed by those  functions $f$ satisfying $\|f\|_\infty\le m$ and $\EE[e^{f}]=1$. We then introduce the  constant
\begin{eqnarray}
\rho & := & \sup_{f\in K\setminus\{0\}}\cH(f),\qquad\textrm{ where }\qquad \cH(f):=\frac{\sum_{i=1}^M\theta_i\ent(T_i^\star e^f)}{\ent(e^f)}.
\end{eqnarray}
We will prove that $\rho\le 1-\kappa$, independently of $m$. This  establishes the desired entropy contraction property (\ref{eq:main}) for all positive measurable functions that are bounded away from $0$ and $\infty$. By a straightforward approximation argument, the conclusion then extends  to all functions in $L\log L$, and our proof of Theorem \ref{th:main} is complete.
\begin{proposition}[Entropy contraction]\label{pr:rho}Under Assumption (iii), the Lipschitz contraction (\ref{assume:lip}) implies that $\rho\le 1-\kappa$, independently of the parameter $m$. 
\end{proposition}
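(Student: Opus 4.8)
The plan is to show that the supremum $\rho$ is attained at some $f\in K$ and then derive a contradiction with $\rho>1-\kappa$ by using the Lipschitz contraction (\ref{assume:lip}) at the maximizer. First I would argue that $\cH$ extends to a continuous functional on the compact set $K$ (with the convention $\cH(0)$ interpreted as a limit, or simply noting that $f=0$ is excluded and $\cH$ is bounded), so that the supremum is achieved at some $f_\star\in K$; here Assumption (iii) and Proposition \ref{pr:variance} are what guarantee that things are finite and well-behaved. The key point is that at a maximizer, a first-order stationarity condition must hold: perturbing $f_\star$ within $K$ (i.e.\ along directions $g$ with $\EE[e^{f_\star}g]=0$, to preserve the constraint $\EE[e^{f_\star}]=1$ to first order, as long as $\|f_\star\|_\infty<m$; the boundary case $\|f_\star\|_\infty=m$ needs separate care) gives an Euler--Lagrange identity.

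The heart of the argument should be a clever rewriting that brings in the operator $\Lambda$. Writing $f_\star=\log h$ with $h=e^{f_\star}$, the quantity $\sum_i\theta_i\ent(T_i^\star h)$ can be related to $\EE[h\,\Lambda(\log h)] - (\text{something})$; indeed $\ent(T_i^\star h)=\EE[(T_i^\star h)\log(T_i^\star h)]=\EE[h\,T_i\log(T_i^\star h)]$ using stationarity and the adjoint relation, so $\sum_i\theta_i\ent(T_i^\star h)=\EE[h\,\Lambda(\log h)]$ minus the normalization term $\EE[h\,\Lambda(\log h)]$'s linear part — more precisely $\sum_i\theta_i\ent(T_i^\star h) = \EE[h\,(\Lambda f_\star)] - \sum_i\theta_i\EE[h]\log\EE[T_i^\star h]$, and since $\EE[T_i^\star h]=\EE[h]=1$ this is just $\EE[h\,\Lambda f_\star]=\EE[e^{f_\star}\Lambda f_\star]$. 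So $\rho\,\ent(e^{f_\star})=\rho\,\EE[e^{f_\star}f_\star]=\EE[e^{f_\star}\Lambda f_\star]$ (using $\EE[e^{f_\star}]=1$). The plan is then to exploit the stationarity of $f_\star$ together with the Lipschitz bound $\Lip(\Lambda f_\star)\le(1-\kappa)\Lip(f_\star)$ to force $\rho\le 1-\kappa$: intuitively, if $\rho>1-\kappa$ then $\Lambda f_\star=\rho f_\star + \text{const}$ would-be (by the Euler--Lagrange equation, $\Lambda f_\star$ and $\rho f_\star$ should agree up to lower-order/constant terms on the support), which would make $\Lip(\Lambda f_\star)$ comparable to $\rho\,\Lip(f_\star)>(1-\kappa)\Lip(f_\star)$, contradicting Proposition \ref{pr:lip} — unless $f_\star$ is constant, in which case $\ent(e^{f_\star})=0$ and $f_\star\notin K\setminus\{0\}$.

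Making the last step rigorous is where I expect the real work to be. The naive Euler--Lagrange computation gives an identity of the form $\rho\,e^{f_\star}(1+f_\star-\text{const}) = (\text{adjoint of the derivative of }\Lambda\text{ applied to }e^{f_\star})$, which is not literally "$\Lambda f_\star=\rho f_\star$"; one has to handle the nonlinearity of $\Lambda$ and the weight $e^{f_\star}$ carefully. A cleaner route, which I would pursue, is to avoid differentiating $\Lambda$ directly and instead iterate: define the flow or the discrete iteration $f_{k+1}:=\Lambda f_k/\EE[e^{\Lambda f_k/\cdots}]$ (suitably normalized) and observe that $\cH$ increases along it while $\Lip$ contracts geometrically by (\ref{assume:lip}); combined with the variance bound of Proposition \ref{pr:variance} (which already pins the "spectral radius" of the linearization at $\le 1-\kappa$) this should squeeze $\rho$ below $1-\kappa$. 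Concretely, one shows $\ent(e^{f_{k}})\le \rho^{-k}(\cdots)$ is impossible to sustain because $\Lip(f_k)\le(1-\kappa)^k\Lip(f_0)\to 0$ forces $e^{f_k}\to 1$, hence $\ent(e^{f_k})\to 0$, while the definition of $\rho$ would force $\ent(e^{f_k})\ge\rho^k\ent(e^{f_0})$ along the maximizing sequence — and $\rho\le 1-\kappa$ is exactly what reconciles the two rates. The main obstacle is controlling the relationship between the decay of $\Lip(f_k)$ and the decay of $\ent(e^{f_k})$ uniformly in $m$: this is precisely why the parameter $m$ and the compactness of $K$ were introduced, and the estimate tying $\ent$ to $\Lip$ (via Poincaré-type inequalities and the boundedness $\|f_k\|_\infty\le m$) must be shown to be $m$-independent in the right way.
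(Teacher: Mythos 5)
Your first route (maximizer plus first-order stationarity plus the Lipschitz contraction) is exactly the paper's strategy, and your identity $\sum_i\theta_i\ent(T_i^\star e^{f_\star})=\EE[e^{f_\star}\,\Lambda f_\star]$ is correct. But the proof has a genuine gap precisely at the point you flag as ``where I expect the real work to be'': you never actually derive, from stationarity, the inequality $\rho\,\Lip(f_\star)\le\Lip(\Lambda f_\star)$ that is needed to close the argument. This step is the content of the paper's Lemma on optimizers: setting $\Psi:=\Lambda f_\star-\rho f_\star$, one perturbs along $f_\varepsilon=\log(e^{f_\star}+\varepsilon h)$ with $h$ a difference of normalized indicators of $\{\Psi>\beta\}\cap\{f_\star\le m-\delta\}$ and $\{\Psi<\beta\}\cap\{f_\star\ge -m+\delta\}$; maximality forces $\langle h,\Psi\rangle\le 0$, which yields $\mathrm{ess\,sup}\,\Psi$ on $\{f_\star\ne m\}$ to be at most $\mathrm{ess\,inf}\,\Psi$ on $\{f_\star\ne -m\}$, i.e.\ $\Lambda f_\star=\rho f_\star+\alpha$ up to one-sided inequalities at the boundary of the $L^\infty$ ball, and hence $\rho\,\Lip(f_\star)\le\Lip(\Lambda f_\star)$. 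Your worry about ``the nonlinearity of $\Lambda$ and the weight $e^{f_\star}$'' dissolves once the perturbation is taken multiplicatively in $e^{f}$ rather than additively in $f$; but without this computation the proof is incomplete.

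The ``cleaner route'' you propose instead does not work. There is no reason for $\cH$ to increase along the iteration $f_{k+1}=\Lambda f_k$, and the inequality $\ent(e^{f_k})\ge\rho^k\ent(e^{f_0})$ does not follow from the definition of $\rho$: that definition gives an \emph{upper} bound $\sum_i\theta_i\ent(T_i^\star e^f)\le\rho\,\ent(e^f)$, not a lower bound on $\ent(e^{\Lambda f})$, and $\ent(e^{\Lambda f})$ is not comparable to $\sum_i\theta_i\ent(T_i^\star e^f)$ in the direction you need ($\Lambda$ is nonlinear and is not the dual of the averaged channel). Finally, you also gloss over the case where every maximizing sequence degenerates to a constant, so that the supremum is \emph{not} attained in $K\setminus\{0\}$: there the Euler--Lagrange argument is vacuous, and the paper handles it by a second-order Taylor expansion $e^{f_k}=1+h_k$, reducing $\rho\le 1-\kappa$ to the variance contraction of Proposition \ref{pr:variance}. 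That is the actual role of the variance step, not merely ``guaranteeing things are finite.''
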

An essential ingredient is the following structural property of  optimizers of $\cH$.
\begin{lemma}[Optimizers]\label{lm:optimizers}If $f\in K\setminus\{0\}$ satisfies $\cH(f)=\rho$, then  there is  $\alpha\in\R$ such that 
\begin{itemize}
\item $\Lambda f \le \rho f  + \alpha$ a.-s. on the set $\{f\ne m\}$.
\item $\Lambda f \ge \rho f + \alpha$ a.-s. on the set $\{f\ne -m\}$.
\end{itemize}
In particular, we must have $\rho\, \Lip(f)\le \Lip(\Lambda f)$.
\end{lemma}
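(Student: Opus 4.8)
The plan is an Euler--Lagrange argument for the optimizer $f$ of the ratio $\cH=N/D$ on $K$, writing $N(f):=\sum_i\theta_i\ent(T_i^\star e^f)$ and $D(f):=\ent(e^f)$. Observe first that $f$ is non-constant (the only constant in $K$ is $0$), so $D(f)=\ent(e^f)>0$; and since $\cH\le\rho$ on $K$ with equality at $f$, the function $f$ maximizes the \emph{affine} functional $N-\rho D$ over $K$, with maximal value $0$. I will turn the stationarity of $f$ into the stated dichotomy, then read off the Lipschitz bound.

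\emph{First variation.} For $g\in L^\infty$, perturb $f\mapsto f+\epsilon g$. Using measure-preservation ($\EE[T_i^\star e^f]=\EE[e^f]=1$) together with the adjoint identity $\EE[(T_i^\star h)\log T_i^\star e^f]=\EE[h\,T_i\log T_i^\star e^f]$, a direct computation gives, at $\epsilon=0$,
\[
\frac{d}{d\epsilon}N(f+\epsilon g)=\EE\!\left[e^{f}g\,\Lambda f\right],\qquad
\frac{d}{d\epsilon}D(f+\epsilon g)=\EE\!\left[e^{f}g\,f\right],
\]
by the very definition $\Lambda f=\sum_i\theta_iT_i\log T_i^\star\exp f$. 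Consequently, along any curve in $K$ issued from $f$ with initial velocity $g$, the one-sided derivative of $\cH$ at $f$ equals $\EE[e^fg(\Lambda f-\rho f)]/\ent(e^f)$, which must be $\le 0$; since $\ent(e^f)>0$ we obtain $\EE[e^{f}g(\Lambda f-\rho f)]\le 0$ for every \emph{admissible} direction $g$, that is every $g\in L^\infty$ with $\EE[e^fg]=0$ (so the normalization $\EE[e^\cdot]=1$ can be restored along a curve by the implicit function theorem) and with $g\le 0$ on $\{f=m\}$, $g\ge 0$ on $\{f=-m\}$ (so the box constraint $\|\cdot\|_\infty\le m$ is preserved).

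\emph{Structure of $\psi:=\Lambda f-\rho f$.} Taking $g$ supported in the buffered interior $E_\eta:=\{-m+\eta\le f\le m-\eta\}$ with $\EE[e^fg]=0$ makes both $\pm g$ admissible, hence the inequality becomes an equality; testing against $g=(\psi-\EE_\nu[\psi])\mathbf{1}_{E_\eta}$ with $\nu\propto e^f\mathbf{1}_{E_\eta}\PP$ forces $\var_\nu(\psi)=0$, so $\psi\equiv\alpha$ a.e.\ on $\{-m<f<m\}$ for a constant $\alpha$ independent of $\eta$. Next take $g=-\mathbf{1}_A+c\,\mathbf{1}_{A'}$ with $A\subset\{f=m\}$, $A'\subset E_\eta$, and $c>0$ chosen so that $\EE[e^fg]=0$: here only $g$, not $-g$, is admissible, and using $\psi=\alpha$ on $A'$ the inequality collapses to $\int_A e^f(\psi-\alpha)\ge 0$ for every such $A$, whence $\psi\ge\alpha$ a.e.\ on $\{f=m\}$. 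Symmetrically, $g=+\mathbf{1}_A-c'\mathbf{1}_{A'}$ with $A\subset\{f=-m\}$ gives $\psi\le\alpha$ a.e.\ on $\{f=-m\}$. Combining the three cases yields $\Lambda f\le\rho f+\alpha$ a.e.\ on $\{f\ne m\}$ and $\Lambda f\ge\rho f+\alpha$ a.e.\ on $\{f\ne-m\}$, as claimed. (If $\{-m<f<m\}$ is $\PP$-null one argues directly: a failure of the separation $\operatorname{ess\,sup}_{\{f=-m\}}\psi\le\operatorname{ess\,inf}_{\{f=m\}}\psi$ produces a perturbation supported on $\{f=m\}\cup\{f=-m\}$ along which $\cH$ strictly increases, a contradiction.) Finally, for the ``in particular'': fix $x,y$ outside the union of the relevant null sets with $f(x)>f(y)$; then $f(x)>f(y)\ge-m$ gives $\Lambda f(x)\ge\rho f(x)+\alpha$, while $f(y)<f(x)\le m$ gives $\Lambda f(y)\le\rho f(y)+\alpha$, so $\Lambda f(x)-\Lambda f(y)\ge\rho\,(f(x)-f(y))$; taking suprema of the ratio by $\dist(x,y)$ over such pairs shows $\Lip(\Lambda f)\ge\rho\,\Lip(f)$.

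The only genuine obstacle I anticipate is the rigor of the variational step: because $\|f\|_\infty\le m$ is a pointwise constraint, one must construct honest curves in $K$ realizing each test direction $g$ — this is exactly why the buffers $E_\eta$, the compensating terms, and the implicit-function argument appear — and one must separately dispose of the degenerate configuration where $\{|f|=m\}$ carries all the mass. Once these admissible curves are available, the rest is the first-variation identity plus elementary manipulations.
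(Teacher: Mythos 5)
Your proposal is correct and is essentially the paper's argument: both rest on the same first-variation identity $\frac{d}{d\epsilon}\cH = \EE[e^f g(\Lambda f-\rho f)]/\ent(e^f)$ at the constrained optimizer, tested against directions that respect the box constraint ($g\le 0$ on $\{f=m\}$, $g\ge 0$ on $\{f=-m\}$) and the normalization, which yields exactly the ordering of $\Psi:=\Lambda f-\rho f$ across the three level regions (the paper extracts it by contradiction from a single two-indicator perturbation of $e^f$, you derive the full KKT conditions, but these are equivalent). The one loose end you flag --- keeping the curve exactly in $K$, since renormalizing $f+\epsilon g$ shifts it by an $O(\epsilon^2)$ constant that could violate $f\ge -m$ --- is resolved precisely by the paper's device of perturbing $e^f$ additively by a mean-zero $h$ (i.e.\ taking $h=e^f g$, so $e^{f_\epsilon}=e^f(1+\epsilon g)$), which preserves $\EE[e^{f_\epsilon}]=1$ exactly.
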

\begin{proof}Fix $f\in K\setminus\{0\}$ such that $\cH(f)=\rho$, and let us introduce the short-hands
\begin{eqnarray*}
A:=\{f\ne m\},\qquad B:=\{f\ne -m\}\qquad\textrm{ and }\qquad 
\Psi & := & \Lambda f -\rho f.
\end{eqnarray*}
With this notation at hands, the two items in the statements are rewritten as 
\begin{eqnarray}
\label{claim:op}
\mathrm{ess\,\sup}_A \Psi & \le &  \mathrm{ess\,\inf}_B \Psi.
\end{eqnarray}
Assume for a contradiction that (\ref{claim:op}) fails. This means that we can find $\beta\in\R$ such that 
\begin{eqnarray*}
\PP\left(A\cap \{\Psi>\beta\}\right)>0 & \textrm{ and } & \PP\left(B\cap \{\Psi<\beta\}\right)>0.
\end{eqnarray*}
By monotone convergence, this remains true with   $A$ and $B$ replaced by $A':=\{f\le m-\delta\}$ and  $B':=\{f\ge-m+\delta\}$, provided  $\delta>0$ is small enough. Now,  consider the function 
\begin{eqnarray}
\label{def:h}
h & := & \frac{{\bf 1}_{A'\cap \{\Psi>\beta\}}}{\PP(A'\cap\{\Psi>\beta\})}-\frac{{\bf 1}_{B'\cap \{\Psi<\beta\}}}{\PP(B'\cap\{\Psi<\beta\})}.
\end{eqnarray}
Note that $h\in L^\infty$, with $\EE[h]=0$ and $h\le 0$ on $(A')^c$ and $h\ge 0$ on $(B')^c$.
Those properties guarantee that the function 
$
f_\varepsilon  :=  \log\left(e^{f}+\varepsilon h\right)
$
is in $K$ for all small enough $\varepsilon>0$, and a  Taylor expansion  gives
\begin{eqnarray*}
\cH(f_\varepsilon) & = & \cH(f)+\frac{\varepsilon \langle h, \Psi\rangle}{\ent(e^f)}+o(\varepsilon) \qquad\textrm{ as }\qquad \varepsilon\to 0.
\end{eqnarray*}
Thus, the maximality of $\cH$ at $f$ imposes $\langle h, \Psi\rangle\le 0$. In view of (\ref{def:h}), this reads
\begin{eqnarray*}
\EE\left[\Psi|A'\cap\{\Psi>\beta\}\right] & \le & \EE\left[\Psi|B'\cap\{\Psi<\beta\}\right].
\end{eqnarray*}
This inequality is clearly self-contradictory, and (\ref{claim:op}) is proved. This means that there is a null set $N\subset \Omega$ such that for all $(x,y)\in (A\setminus N)\times (B\setminus N)$, we have $\Psi(x)\le\Psi(y)$, hence
\begin{eqnarray*}
\rho f(y)-\rho f(x) & \le & |(\Lambda f)(y)-(\Lambda f)(x)|.
\end{eqnarray*}
 On the other hand, this inequality  trivially holds when $(x,y)\in \Omega^2\setminus (A\times B)$,  because the left-hand side is then non-positive. The conclusion $\rho\, \Lip(f)\le \Lip(\Lambda f)$ follows.
\end{proof}
\begin{proof}[Proof of Proposition \ref{pr:rho}]By definition of $\rho$, there is a sequence $(f_k)_{k\ge 1}$ in $K\setminus\{0\}$ such that 
\begin{eqnarray}
\label{rholim}
\rho & = & \lim_{k\to\infty}\cH(f_k).
\end{eqnarray}
Upon extracting a subsequence if needed, we may further assume that $(f_k)_{k\ge 1}$ has an almost-sure limit $f$. Note that $f$ automatically inherits the properties $\|f\|_\infty\le m$ and $\EE[e^f]=1$. If $f$ is not a.s.\ zero, then $f\in K\setminus\{0\}$ and 
$\cH(f)=\rho$, so Lemma \ref{lm:optimizers} and  (\ref{assume:lip}) together yield
 \begin{eqnarray*}
\rho\,\Lip(f) & \le  & \Lip\left(\Lambda f\right) \ \le \ (1-\kappa)\,\Lip(f).
\end{eqnarray*} 
Moreover,  Assumption (iii) ensures that the middle term is finite, so that $\Lip(f)<\infty$. Therefore, we may safely simplify through by $\Lip(f)$ to obtain $\rho\le 1-\kappa$, as desired. Consider now the degenerate case where $f=0$ almost surely. We can then write $e^{f_k}=1+h_k$ with $\|h_k\|_\infty\le e^m$, $\EE[h_k]=0$ and $h_k\to 0$ as $k\to\infty$. But then, a Taylor expansion yields
\begin{eqnarray*}
 \ent\left(e^{f_k}\right) & \sim & \frac{1}{2}\var(h_k)\\
 \sum_{i=1}^M\theta_i\ent\left(T_i^\star e^{f_k}\right) & \sim & \frac{1}{2}\sum_{i=1}^M\theta_i\var(T^\star_i h_k),
 \end{eqnarray*} 
where the notation $a_k\sim b_k$ means that $a_k/b_k\to 1$ as $k\to\infty$. In particular, (\ref{rholim}) becomes
\begin{eqnarray*}
\rho & = & \lim_{k\to\infty}\frac{\sum_{i=1}^M\theta_i\var(T^\star_i h_k)}{\var(h_k)}.
 \end{eqnarray*}
By Proposition \ref{pr:variance}, the right-hand is at most $1-\kappa$, and the proof is complete.
\end{proof}

\section{Applications}\label{sec:appl}
In this section we address the application of Theorem \ref{th:main} to the main examples discussed in Section \ref{sec:applications}.
\subsection{Gaussian Free Fields: Proof of Theorem \ref{th:gauss}}\label{sec:gauss}
Let us first show that the second part of the theorem follows from the first, namely that once we have $\kappa \ge \delta \theta_\star$ for all choices of $\theta$, it follows that for Glauber dynamics one has $\kappa=\lambda=\delta/n$.  Since $\theta_\star=1/n$ in this case, and, in general, $\kappa\le \lambda$, the claim \eqref{eq:Glaub} follows from \eqref{eq:mainGauss} and the following observation.
\begin{lemma}\label{lem:linearGap}
The Glauber dynamics satisfies $\lambda \le \delta/n$. 
\end{lemma}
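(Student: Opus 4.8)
The plan is to exhibit an explicit test function that witnesses the upper bound $\lambda \le \delta/n$ for the Glauber dynamics spectral gap. Since $\delta$ is the smallest eigenvalue of $\Delta = {\rm Id} - P$, pick a (real) unit eigenvector $z \in \R^n$ with $\Delta z = \delta z$, and consider the linear function $f(x) := z^\top x$. Because $\PP$ is the centered Gaussian with covariance $\Gamma = \Delta^{-1}$, we have $\EE[f] = 0$ and $\var(f) = z^\top \Gamma z = z^\top \Delta^{-1} z = \delta^{-1}$.

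Next I would compute the Dirichlet form $\cE(f,f) = \sum_{A}\theta_A\,\EE[\var_A(f)]$ in the Glauber case $\theta_A = \tfrac1n\,{\bf 1}_{|A|=1}$. For a single-site block $A = \{i\}$, $\var_{\{i\}}(f)$ is the conditional variance of $z^\top x$ given $(x_j)_{j\ne i}$, which is $z_i^2$ times the conditional variance of $x_i$ given the other coordinates. For a Gaussian with precision matrix $\Delta = \Gamma^{-1}$, the conditional variance of $x_i$ given the rest is the deterministic constant $1/\Delta_{ii}$, and since $P$ is sub-stochastic with zero diagonal (it is the transition matrix of a walk, so $P_{ii}=0$, hence $\Delta_{ii} = 1$), this conditional variance equals $1$. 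Therefore $\EE[\var_{\{i\}}(f)] = z_i^2$, and summing, $\cE(f,f) = \tfrac1n \sum_{i=1}^n z_i^2 = \tfrac1n$.

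Finally, the spectral gap satisfies $\lambda \le \cE(f,f)/\var(f) = (1/n)/(1/\delta) = \delta/n$, which is the claim. I should double-check that $f \in L^2$ (immediate, as $\PP$ has finite second moments) and that $f$ is nonconstant (true since $z \ne 0$), so it is a legitimate test function in the variational characterization of $\lambda$.

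The only subtle point — and the one I would want to state carefully — is the claim that $\Delta_{ii} = 1$, i.e., that $P$ has zero diagonal. The paper motivates the Gaussian free field via killed random walks, for which indeed $P_{ii}=0$; but the theorem is stated for an arbitrary irreducible symmetric sub-stochastic $P$, which could a priori have $P_{ii} > 0$. If $P_{ii}$ can be nonzero, then $\Delta_{ii} = 1 - P_{ii} \le 1$, the conditional variance of $x_i$ is $1/\Delta_{ii} \ge 1$, and the computation gives $\cE(f,f) = \tfrac1n \sum_i z_i^2/\Delta_{ii}$, which may exceed $1/n$, weakening the bound. I expect the resolution is either that the convention $P_{ii}=0$ is in force throughout (the natural reading, given the random-walk motivation), or that one absorbs any diagonal part of $P$ into a rescaling; in the write-up I would either invoke the zero-diagonal convention explicitly or, more robustly, note that $\var_{\{i\}}(f) = z_i^2/\Delta_{ii}$ and choose instead the eigenvector realizing the optimal Rayleigh quotient for the generalized eigenproblem, but the cleanest route is simply to record $P_{ii} = 0$. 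This is the main (and really the only) obstacle; the rest is a short Gaussian conditional-variance computation.
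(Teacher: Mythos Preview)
Your approach is correct and matches the paper's: both plug in the linear test function $f(x)=z^\top x$ with $z$ a bottom eigenvector of $\Delta$. The paper organizes the arithmetic slightly differently, noting that $\EE_i[x_i]=(Px)_i$ and hence $\sum_i(f-\EE_i f)(x)=z^\top\Delta x=\delta f(x)$, which yields $\cE(f,f)=\tfrac{\delta}{n}\var(f)$ directly without computing either side on its own; your route via $\var(f)=\delta^{-1}$ and $\var_{\{i\}}(f)=z_i^2$ reaches the same ratio. Your flagged concern about $P_{ii}=0$ is well taken and applies equally to the paper's proof, since the identity $\EE_i[x_i]=(Px)_i$ it invokes also requires $\Delta_{ii}=1$; the paper uses this without comment, so the zero-diagonal convention is evidently in force.
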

\begin{proof}
By definition, 
\begin{eqnarray}
\label{PI:bis}
\forall f\in L^2,\qquad 
\lambda\,\var(f) & \le & \cE(f,f) \ = \frac1n\, \EE\left [f\sum_{i=1}^n(f-\EE_if)\right].
\end{eqnarray}
 Let us apply this to the linear observable $f(x)=x^\top z$, where $z\in\R^n$ is an eigenvector of $\Delta$ corresponding to the eigenvalue $\delta$. If $u_j(x)=x_j$, then  $\EE_i[u_j](x)=x_j$ for all $j\neq i$, and $\EE_i[u_j](x)=(Px)_i$ if $j=i$. Therefore,  for all $x\in\R^n$,
\begin{eqnarray*}
\sum_{i=1}^n(f-\EE_i f)(x) & = & \sum_{i=1}^n z_i(\Delta x)_i \ =  \ z^\top \Delta x \ = \ x^\top \Delta z \ = \ \delta f(x).
\end{eqnarray*}
Thus, (\ref{PI:bis}) forces $\lambda\le \delta/n$, as desired.  
\end{proof}
We turn to the proof of \eqref{eq:mainGauss}. Recall the notation introduced in \eqref{eq:dgff}, and 
let $Z\colon \Omega\to\Omega$ denote the identity map on $\Omega$, which forms a random vector with law $\PP$. 
For each set $A\subset[n]$, we consider the  Markov operator $T_A\colon L^2\to L^2$ defined by
\begin{eqnarray}
T_A f & := &  \EE[f|Z_j,j\in \bA],
\end{eqnarray}
where $\bA=[n]\setminus A$. Note that $T_A$ is a self-adjoint, measure-preserving, Markov operator. 
In order to apply Theorem \ref{th:main}, we need to investigate the curvature of the associated transition kernel.
\paragraph{A contractive coupling}
Let us start 
with an explicit  
expression for  the transition kernel associated to our Markov operator $T_A$. 
\begin{lemma}[Explicit representation]\label{lem:lexpl}
For all $f\in L^2$ and  $x\in\R^n$,
\begin{eqnarray}
\label{def:TA}
(T_Af)(x) & = & \int_{\R^n} f\left(M_A z+({\rm Id}-M_A)x\right)\PP({\rm d}z),
\end{eqnarray}
where $M_A$ is the $n\times n$ matrix defined by the blocks
\begin{equation}\label{eq:defM}
\begin{array}{rclrcl}
 M_{A\times A} & = & \mathrm{Id} \qquad\qquad
 & M_{A\times \bA} & = & -\Gamma_{A\times \bA}\,(\Gamma_{\bA\times \bA})^{-1}\\
M_{\bA\times A}& = & 0 & M_{\bA\times\bA} & = & 0.
 \end{array} 
\end{equation}
\end{lemma}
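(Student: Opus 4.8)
The plan is to recall how the conditional expectation of a Gaussian vector given a sub-collection of its coordinates is computed, and then to recognize the resulting formula as an average of an affine pushforward. Concretely, write $Z=(Z_A,Z_{\bA})$ for the block decomposition of the identity map according to $A$ and $\bA=[n]\setminus A$. Since $\PP$ is the centered Gaussian with covariance $\Gamma$, the conditional law of $Z_A$ given $Z_{\bA}=x_{\bA}$ is Gaussian with mean $\Gamma_{A\times\bA}(\Gamma_{\bA\times\bA})^{-1}x_{\bA}$ and covariance $\Gamma_{A\times A}-\Gamma_{A\times\bA}(\Gamma_{\bA\times\bA})^{-1}\Gamma_{\bA\times A}$ (the Schur complement), while $Z_{\bA}$ stays equal to $x_{\bA}$. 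Therefore $(T_Af)(x)=\EE[f(Z)\mid Z_{\bA}=x_{\bA}]$ is an integral of $f$ against this conditional law.

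First I would make the change of variables that realizes this conditional law as the image of $\PP$ itself under an affine map. Let $z$ be an independent copy of $Z$ with law $\PP$. I claim the random vector $W:=M_A z+(\mathrm{Id}-M_A)x$ has exactly the conditional law of $Z$ given $Z_{\bA}=x_{\bA}$. Indeed, reading off the blocks of $M_A$ in \eqref{eq:defM}: the $\bA$-block of $W$ is $M_{\bA\times A}z_A+M_{\bA\times\bA}z_{\bA}+( \mathrm{Id}-M)_{\bA\times A}x_A+(\mathrm{Id}-M)_{\bA\times\bA}x_{\bA}=0+0+0+x_{\bA}$, so $W_{\bA}=x_{\bA}$ deterministically, as required. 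The $A$-block of $W$ is $z_A+M_{A\times\bA}z_{\bA}+(\mathrm{Id}-M_{A\times A})x_A+(\mathrm{Id}-M_{A\times\bA})x_{\bA}=z_A-\Gamma_{A\times\bA}(\Gamma_{\bA\times\bA})^{-1}z_{\bA}+\Gamma_{A\times\bA}(\Gamma_{\bA\times\bA})^{-1}x_{\bA}$. This is an affine function of the jointly Gaussian pair $(z_A,z_{\bA})$, hence Gaussian; I would compute its mean, which is $\Gamma_{A\times\bA}(\Gamma_{\bA\times\bA})^{-1}x_{\bA}$ since $z$ is centered, and its covariance, which is the covariance of $z_A-\Gamma_{A\times\bA}(\Gamma_{\bA\times\bA})^{-1}z_{\bA}$, namely $\Gamma_{A\times A}-\Gamma_{A\times\bA}(\Gamma_{\bA\times\bA})^{-1}\Gamma_{\bA\times A}$, the Schur complement. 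Both match the conditional law exactly.

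Having identified the law of $W$, the formula \eqref{def:TA} follows immediately: $(T_Af)(x)=\EE[f(W)]=\int_{\R^n}f(M_Az+(\mathrm{Id}-M_A)x)\,\PP(\dd z)$. The only subtlety is a measurability/integrability check, ensuring that $T_Af$ so defined agrees $\PP$-almost everywhere with the conditional expectation $\EE[f\mid Z_j,\,j\in\bA]$ for $f\in L^2$; this is routine since $\Gamma_{\bA\times\bA}$ is invertible (being a principal submatrix of the positive definite $\Gamma$) so $M_A$ is well-defined, and the affine map is continuous. I do not anticipate a genuine obstacle here: the entire statement is the classical Gaussian conditioning formula repackaged in a coupling-friendly form, and the mild work is just verifying the block arithmetic for $M_A$ and confirming that the pushforward of $\PP$ under $z\mapsto M_Az+(\mathrm{Id}-M_A)x$ has the right mean and covariance. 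The reason for stating it in this pushforward form is that it exposes a natural coupling of $T_A(x,\cdot)$ and $T_A(y,\cdot)$ — use the same $z$ — which is precisely what the subsequent curvature estimates in the proof of Theorem \ref{th:gauss} will exploit.
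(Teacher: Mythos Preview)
Your argument is correct; it is a mild variant of the paper's own proof. The paper proceeds slightly differently: instead of quoting the Gaussian conditioning formula and then matching moments, it writes $Z=(\mathrm{Id}-M_A)Z+M_AZ$, computes $M_A\Gamma$ block by block to see that $M_AZ$ is uncorrelated with (hence independent of) $Z_{\bA}$, observes that $(\mathrm{Id}-M_A)Z$ is $\sigma(Z_{\bA})$-measurable, and then invokes the general fact that $\EE[f(U+V)\mid\cG]=\int f(U+v)\,\mu_V(\dd v)$ when $U$ is $\cG$-measurable and $V$ is $\cG$-independent. This is marginally more self-contained, since it derives the conditional law rather than citing it, but the content is the same. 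One small notational slip in your write-up: in the $A$-block of $(\mathrm{Id}-M_A)x$ you wrote ``$(\mathrm{Id}-M_{A\times\bA})x_{\bA}$'', whereas the $A\times\bA$ block of $\mathrm{Id}$ is zero, so this term is simply $-M_{A\times\bA}x_{\bA}=\Gamma_{A\times\bA}(\Gamma_{\bA\times\bA})^{-1}x_{\bA}$; your final expression is nonetheless correct.
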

\begin{proof}
From the block definition of $M_A$ above, we can  readily compute $M_A\Gamma$ :
\begin{equation}\label{eq:maga}
\begin{array}{rclrcl}
 (M_A \Gamma)_{A\times A} & = & \Gamma_{A\times A}-\Gamma_{A\times \bA}\,(\Gamma_{\bA\times \bA})^{-1}\,\Gamma_{\bA\times A},\quad\quad
 & (M_A \Gamma)_{A\times \bA} & = & 0,\\
(M_A \Gamma)_{\bA\times A}& = & 0, & (M_A \Gamma)_{\bA\times\bA} & = & 0.
 \end{array} 
\end{equation}
The second line implies that the random vector $M_A Z$ is uncorrelated with $(Z_j\colon j\in \bA)$, hence independent of $\cG:=\sigma(Z_j\colon j\in\bA)$.    
On the other hand, the fact that $M_{A\times A} = \mathrm{Id}$ ensures that the random vector $({\rm Id}-M_A)Z$ is $\cG-$measurable. Thus, the decomposition
\begin{eqnarray*}
\label{dec:orth}
Z & = & ({\rm Id}-M_A)Z+ M_A Z,
\end{eqnarray*}
expresses $Z$ as the sum of a $\cG-$measurable  vector and a $\cG-$independent one. The claim now follows from a standard property of conditional expectation  (see, e.g. \cite[Section 9.10]{MR1155402}).
\end{proof}
To ensure that $T_A$ has non-negative sectional curvature, 
we equip $\R^n$ with the following distorted metric instead of the usual Euclidean norm. Let 
$\psi$ denote an eigenvector of $\Delta$ with eigenvalue $\delta$. Since $P$ is irreducible with nonnegative entries, by Perron-Frobenius theorem we may choose $\psi$ with positive entries: $\psi_i>0$ for all $i\in[n]$. 
We then define the metric
\begin{eqnarray}
\label{def:dista}
\dist(x,y) & := & \sum_{i=1}^n \psi_i | (\Delta x)_i -(\Delta y)_i|  .
\end{eqnarray}
Let $W_1$ and $W_\infty$ denote the Wasserstein distances associated with this metric.
\begin{lemma}\label{lm:gausscurv}
For any $A\subset[n]$, we have
\begin{eqnarray*}
\forall x,y\in\Omega,\qquad W_\infty\left(T_A(x,\cdot),T_A(y,\cdot)\right) & \le & \dist(x,y)-\delta\sum_{i\in A} \psi_i | (\Delta x)_i -(\Delta y)_i|  
\end{eqnarray*}
In particular, 
for any probability vector $(\theta_A,\,A\subset[n])$, one has
\begin{eqnarray*}
\forall x,y\in\Omega,\qquad \sum_{A\subset[n]}\theta_AW_\infty\left(T_A(x,\cdot),T_A(y,\cdot)\right) & \le & (1-\delta\theta_\star)\dist(x,y)\,, 
\end{eqnarray*}
where $\theta_\star=\min_i\sum_{A\ni i}\theta_A$.
\end{lemma}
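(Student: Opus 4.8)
The strategy is to exhibit an explicit coupling of the two conditional laws $T_A(x,\cdot)$ and $T_A(y,\cdot)$ and bound $\dist$ along it. By Lemma \ref{lem:lexpl}, if $Z\sim\PP$, then $T_A(x,\cdot)$ is the law of $X:=M_A Z+({\rm Id}-M_A)x$ and $T_A(y,\cdot)$ is the law of $Y:=M_A Z+({\rm Id}-M_A)y$, using the \emph{same} draw of $Z$. This is the natural synchronous coupling, so it suffices to bound $\dist(X,Y)$ deterministically. Since $X-Y=({\rm Id}-M_A)(x-y)$ does not involve $Z$ at all, the bound will in fact be deterministic, which is exactly what the $W_\infty$ statement requires. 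Writing $v:=x-y$, we need to control $\dist(X,Y)=\sum_i\psi_i\,|(\Delta({\rm Id}-M_A)v)_i|$ in terms of $\dist(x,y)=\sum_i\psi_i\,|(\Delta v)_i|$.

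\textbf{Key computation.} The heart of the matter is to understand the matrix $\Delta({\rm Id}-M_A)$ acting on the ``Laplacian coordinates.'' It is cleaner to change variables: set $w:=\Delta v$, so $v=\Gamma w$ (recall $\Gamma=\Delta^{-1}$), and we must compare $\sum_i\psi_i|(\Delta({\rm Id}-M_A)\Gamma w)_i|$ with $\sum_i\psi_i|w_i|$. From \eqref{eq:maga} we know $M_A\Gamma$ is supported on the $A\times A$ block, with $M_{\bA\times\cdot}=0$ and $(M_A\Gamma)_{A\times\bA}=0$; moreover $({\rm Id}-M_A)\Gamma = \Gamma - M_A\Gamma$. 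I expect the key algebraic fact to be that the relevant matrix $\Delta({\rm Id}-M_A)\Gamma = {\rm Id} - \Delta M_A\Gamma$ has a very special structure: its rows indexed by $\bA$ equal the corresponding rows of the identity (so those coordinates are untouched), and in the rows indexed by $A$ it acts as $P$ restricted appropriately. Concretely, I anticipate showing that for $i\in\bA$ one has $(\Delta({\rm Id}-M_A)v)_i=(\Delta v)_i$, while for $i\in A$, using $\Delta={\rm Id}-P$ and the Schur-complement identity for $(M_A\Gamma)_{A\times A}$, the $i$-th entry is a convex-type combination governed by $P$, so that $\psi_i|(\Delta({\rm Id}-M_A)v)_i|\le \sum_j P_{ij}\psi_j|\cdots|$-type bound collapses, after summing against $\psi$ and using $\Delta\psi=\delta\psi$ i.e.\ $P\psi=(1-\delta)\psi$, to a contraction by the factor $(1-\delta)$ on precisely the $A$-coordinates. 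Combining the untouched $\bA$-part with the contracted $A$-part yields
\begin{eqnarray*}
\dist(X,Y)&\le&\sum_{i\in\bA}\psi_i|(\Delta v)_i| + (1-\delta)\sum_{i\in A}\psi_i|(\Delta v)_i| \ = \ \dist(x,y)-\delta\sum_{i\in A}\psi_i|(\Delta v)_i|,
\end{eqnarray*}
which is the claimed first inequality.

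\textbf{From the per-block bound to the averaged bound.} The second statement is then immediate: since $W_1\le W_\infty$, averaging the first inequality against $(\theta_A)$ gives
\begin{eqnarray*}
\sum_A\theta_A W_\infty(T_A(x,\cdot),T_A(y,\cdot)) &\le& \dist(x,y)-\delta\sum_A\theta_A\sum_{i\in A}\psi_i|(\Delta v)_i|\\
&=&\dist(x,y)-\delta\sum_{i=1}^n\psi_i|(\Delta v)_i|\Big(\sum_{A\ni i}\theta_A\Big)\\
&\le&\dist(x,y)-\delta\theta_\star\sum_{i=1}^n\psi_i|(\Delta v)_i| \ = \ (1-\delta\theta_\star)\dist(x,y),
\end{eqnarray*}
using $\sum_{A\ni i}\theta_A\ge\theta_\star$ for every $i$.

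\textbf{Main obstacle.} The delicate point is the per-coordinate estimate for $i\in A$: I need the Schur-complement block $(M_A\Gamma)_{A\times A}=\Gamma_{A\times A}-\Gamma_{A\times\bA}(\Gamma_{\bA\times\bA})^{-1}\Gamma_{\bA\times A}$ to interact with $\Delta$ in just the right way so that $\Delta({\rm Id}-M_A)$, expressed in Laplacian coordinates, becomes a genuinely substochastic operator on the $A$-block that is dominated (entrywise in absolute value, after the $\psi$-weighting) by $P$ restricted so as to pick up the eigenvalue relation $P\psi=(1-\delta)\psi$. Verifying this may require writing $\Delta$ in block form, identifying $(M_A\Gamma)_{A\times A}$ as $(\Delta_{A\times A})^{-1}$ (the inverse of the $A$-block of $\Delta$, by the standard Schur-complement-of-the-inverse identity), and then checking that ${\rm Id}_{A}-\Delta_{A\times A}(\Delta_{A\times A})^{-1}=0$ is \emph{not} quite what appears — rather a cross term involving $P_{A\times\bA}$ survives and is exactly what must be bounded by $\psi$-positivity and nonnegativity of $P$. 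Handling signs correctly (so that the triangle inequality genuinely yields the $(1-\delta)$ factor rather than something weaker) is where care is needed; everything else is bookkeeping.
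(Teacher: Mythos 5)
You have the right coupling (the synchronous one from Lemma \ref{lem:lexpl}, under which $X-Y=({\rm Id}-M_A)(x-y)$ is deterministic, so the $W_\infty$ bound reduces to a linear-algebra inequality), and your averaging step for the second display is correct (you do not even need $W_1\le W_\infty$ there, since both displays concern $W_\infty$). The gap is that the key computation --- the only nontrivial part of the lemma --- is left as an anticipation, and the structure you anticipate is backwards. In Laplacian coordinates $w=\Delta v$ the relevant matrix is ${\rm Id}-\Delta M_A\Gamma={\rm Id}-M_A^\top$ (using that $M_A\Gamma$ is symmetric), which in blocks ordered $(A,\bA)$ reads
\[
{\rm Id}-M_A^\top\;=\;\begin{pmatrix}0&0\\ P_{\bA\times A}\,(\Delta_{A\times A})^{-1}&{\rm Id}\end{pmatrix}.
\]
So the rows indexed by $A$ vanish identically (the resampled coordinates contribute $0$, not a factor $1-\delta$), while the rows indexed by $\bA$ are \emph{not} identity rows: they acquire the nonnegative cross term $P_{\bA\times A}(\Delta_{A\times A})^{-1}w_A$. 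Your claimed decomposition ``$\bA$-part untouched, $A$-part contracted by $1-\delta$'' is therefore false row by row, and a coordinate-wise argument of that form cannot go through.

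What is true is a statement about $\psi$-weighted \emph{column} sums. Since all entries of ${\rm Id}-M_A^\top$ are nonnegative (this uses $-\Delta_{\bA\times A}=P_{\bA\times A}\ge0$ and the Neumann series $(\Delta_{A\times A})^{-1}=\sum_{k\ge0}(P_{A\times A})^k\ge0$), the triangle inequality gives
\[
\sum_i\psi_i\bigl|(({\rm Id}-M_A^\top)w)_i\bigr|\;\le\;\sum_j|w_j|\,\bigl(\psi_j-(M_A\psi)_j\bigr),
\]
so the whole proof reduces to showing $(M_A\psi)_j\ge\delta\psi_j$ for $j\in A$ (it is $0$ for $j\in\bA$). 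This is where $\delta$ and the Perron--Frobenius positivity of $\psi$ finally enter: $M_A\psi=M_A\Gamma\Delta\psi=\delta M_A\Gamma\psi$, and $M_A\Gamma$ has the single nonzero block $(\Delta_{A\times A})^{-1}\ge{\rm Id}$ entrywise (again by the Neumann series), whence $(M_A\psi)_j=\delta[(\Delta_{A\times A})^{-1}\psi_A]_j\ge\delta\psi_j$ for $j\in A$. None of these three facts --- the identity $\Delta({\rm Id}-M_A)=({\rm Id}-M_A^\top)\Delta$, the entrywise nonnegativity, and the eigenvector bound $(M_A\psi)_A\ge\delta\psi_A$ --- is established in your write-up; your ``Main obstacle'' paragraph correctly senses that a cross term survives but does not resolve it. Supplying these three steps turns your plan into the paper's proof.
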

\begin{proof}
The second inequality is an immediate consequence of the first. To prove the first, fix a subset $A\subset[n]$ and two points $x,y\in\Omega$. Recalling that $Z\sim\PP$, Lemma \ref{lem:lexpl} shows that 
the random  vectors  
\begin{equation*}
\left\{
\begin{array}{rlc}
X & := & ({\rm Id}-M_A)x+M_AZ\\
Y & := & ({\rm Id}-M_A)y+M_AZ.
\end{array}
\right.
\end{equation*}
form a coupling of $T_A(x,\cdot)$ and $T_A(y,\cdot)$. Thus, taking $z=x-y$,  the proof boils down to showing that for all $z\in\R^n$, 
\begin{eqnarray}\label{eq:toprove}
 \sum_{i=1}^n \psi_i | [\Delta({\rm Id}-M_A) z]_i |\;\le\;   \sum_{i=1}^n \psi_i | (\Delta z)_i | - \delta\sum_{i\in A} \psi_i |  (\Delta z)_i |.
\end{eqnarray}
To prove this,  recall that the matrix $M_A\Gamma$ computed in the proof of the previous lemma was symmetric, i.e. $M_A\Gamma= \Gamma M^\top_A$ or equivalently, $\Gamma^{-1}M_A=M^\top_A\Gamma^{-1}$. Combining this with the fact that $M^2_A=M_A$, we obtain $\Gamma^{-1}M_A  =  M^\top_A\Gamma^{-1}M_A$. In particular, $\Delta M_A$ is symmetric and $\Delta({\rm Id}-M_A) =({\rm Id}-M^\top_A)\Delta $. Next, we observe that ${\rm Id}-M^\top_A$ has nonnegative entries. To see this, write 
\[
{\rm Id}-M^\top_A = \begin{pmatrix}
0 & 0\\
(\Gamma_{A^c\times A^c})^{-1}\Gamma_{A^c\times A} & {\rm Id}
\end{pmatrix},
\] 
and note that $\Gamma_{A^c\times A}\Delta_{A\times A} + \Gamma_{A^c\times A^c}\Delta_{A^c\times A}=(\Gamma\Delta)_{A^c\times A} = 0$, or equivalently
\[
(\Gamma_{A^c\times A^c})^{-1}\Gamma_{A^c\times A}\;=\;-\Delta_{A^c\times A}(\Delta_{A\times A})^{-1}.\]
Now, $-\Delta_{A^c\times A} = P_{A^c\times A}$ has nonnegative entries, as well as 
\begin{align}\label{eq:neump}
(\Delta_{A\times A})^{-1}=\sum_{k=0}^\infty (P_{A\times A})^k\,.
\end{align}
This proves that ${\rm Id}-M^\top_A$ has nonnegative entries. As a consequence, 
\begin{align*}
  [\Delta({\rm Id}-M_A) z]_i &\;
 \le\; 
 \sum_{j=1}^n  |({\rm Id}-M^\top_A)_{i,j}(\Delta z)_j |\\ &
\;= \;
\sum_{j=1}^n  ({\rm Id}-M^\top_A)_{i,j}|(\Delta z)_j |
 \; =\;|(\Delta z)_i |- \sum_{j=1}^n |(\Delta z)_j |(M_A)_{j,i}\,.
\end{align*}
Therefore,
\begin{eqnarray}\label{eq:st3}
 \sum_{i=1}^n \psi_i | [\Delta({\rm Id}-M_A) z]_i |\;\le\;   \sum_{i=1}^n \psi_i | (\Delta z)_i | - \sum_{j=1}^n (M_A\psi)_j |  (\Delta z)_j |.
\end{eqnarray}
To conclude, observe that $M_A\psi=
M_A\Gamma \Delta\psi = \delta M_A\Gamma\psi$, and that 
\[
M_A\Gamma = \begin{pmatrix}
(\Delta_{A\times A})^{-1} & 0\\
0 & 0
\end{pmatrix},
\]
where the latter follows from a standard Schur complement computation and \eqref{eq:maga}.  
Thus, $M_A\psi=
\delta (\Delta_{A\times A})^{-1}\psi_A$, and using \eqref{eq:neump}, one finds 
\begin{eqnarray*}
(M_A\psi)_j \;  =\;\delta [(\Delta_{A\times A})^{-1}\psi_A]_j\;\ge\;\delta \,\psi_j\,,\qquad j\in A\,, 
\end{eqnarray*}
and $(M_A\psi)_j = 0$ otherwise. Inserting this in \eqref{eq:st3} we have proved \eqref{eq:toprove}.
\end{proof}
Since $W_1\left(T_A(x,\cdot),T_A(y,\cdot)\right)\le W_\infty\left(T_A(x,\cdot),T_A(y,\cdot)\right)$, Lemma \ref{lm:gausscurv} implies that both Assumptions (i)-(ii) in Theorem \ref{th:main} are verified.
It remains to check the regularity Assumption (iii).  Unfortunately, Assumption (iii) fails, except in the degenerate case where $A=[n]$. To circumvent this, we use the regularization procedure outlined in Remark \ref{rk:regul}, using the Ornstein-Uhlenbeck semi-group. This will conclude the proof of Theorem \ref{th:gauss}.
\paragraph{Ornstein-Uhlenbeck regularization}
Given a parameter $\tau\in(0,1)$, we consider the Markov operator $P_\tau$ defined by
\begin{eqnarray*}
(P_\tau f)(x) & := & \int_{\Omega}f\left(\sqrt{1-\tau}x+\sqrt{\tau}z\right)\PP({\rm d}z).
\end{eqnarray*}
Note that $P_\tau$ is measure-preserving, because the image of $\PP\otimes\PP$ under the map $(x,z)\mapsto \sqrt{1-\tau}x+\sqrt{\tau}z$ is  $\PP$. An easy change of variable yields the alternative expression
\begin{eqnarray}
\label{kernel}
(P_\tau f)(x) & = & \int_{\Omega}p_\tau\left(x,y\right)f\left(y\right)\PP({\rm d}y),
\end{eqnarray}
where the transition kernel $p_\tau\colon\Omega\times \Omega\to\R_+$  is explicitly given by
\begin{eqnarray*}
 p_\tau(x,y) & := & \tau^{-\frac n2}\,\exp\left\{\frac{\sqrt{1-\tau}}{\tau}x^\top \Gamma^{-1}y-\frac {1-\tau}{2\tau}x^\top\Gamma^{-1}x-\frac{1-\tau}{2\tau}y^\top\Gamma^{-1}y\right\}.
\end{eqnarray*}
The fact that this formula is symmetric in $x$ and $y$ readily guarantees that $P_\tau$ is self-adjoint. 
We now verify that $(P_\tau)_{0<\tau<1}$ satisfies the three properties in Remark \ref{rk:regul}.
For the continuity, we need to check that  
for any $f\in L^2$, we have
\begin{eqnarray*}
P_\tau f & \xrightarrow[\tau\to 0]{L^2} &  f.
\end{eqnarray*}
The claim is clear when $f$ is continuous and bounded. In the general case,  consider a sequence  $(f_k)_{k\ge 1}$ of bounded measurable functions that converges to $f$, and  write
\begin{eqnarray*}
\|P_\tau f-f\|_2 & \le & \|P_\tau f_k-f_k\|_2+\|P_\tau f-P_\tau f_k\|_2+\|f_k-f\|_2\\
& \le & \|P_\tau f_k-f_k\|_2+2\|f_k-f\|_2.
 \end{eqnarray*} 
Taking first a $\limsup$ as $\tau\to 0$ and then a limit as $k\to\infty$ concludes the proof. 

The second property  in Remark \ref{rk:regul} is the regularity $\Lip(P_\tau f)<\infty$ for all $f\in L^\infty$, $\tau>0$. Since our metric is comparable to Euclidean norm, this follows from the well known regularizing property of the  Ornstein-Uhlenbeck semi-group, see e.g.\ \cite[Th.\ 4.7.2]{bakry2014analysis}. 

It remains to check the third property  in Remark \ref{rk:regul}, namely non-negative sectional curvature.
\begin{lemma}[Non-negative sectional curvature]\label{lm:OU2}For every $x,y\in \Omega$, we have
\begin{eqnarray*}
W_\infty\left(P_\tau(x,\cdot),P_\tau(y,\cdot)\right)  & \le &  \sqrt{1-\tau}\dist(x,y).
\end{eqnarray*}
\end{lemma}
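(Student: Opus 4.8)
The plan is to exhibit an explicit coupling of $P_\tau(x,\cdot)$ and $P_\tau(y,\cdot)$ that contracts the distorted metric $\dist(\cdot,\cdot)$ by exactly the factor $\sqrt{1-\tau}$, uniformly (not just in expectation). Recall from the definition that under $P_\tau(x,\cdot)$ a sample has the form $\sqrt{1-\tau}\,x+\sqrt{\tau}\,Z$ with $Z\sim\PP$. So I would simply use the \emph{synchronous} coupling: draw a single $Z\sim\PP$ and set $X:=\sqrt{1-\tau}\,x+\sqrt{\tau}\,Z$, $Y:=\sqrt{1-\tau}\,y+\sqrt{\tau}\,Z$. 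This is a valid coupling of the two laws $P_\tau(x,\cdot)$ and $P_\tau(y,\cdot)$.

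Next I would compute the distance between the coupled points. Since $\Delta$ is linear, $\Delta X-\Delta Y=\sqrt{1-\tau}\,\Delta(x-y)$, hence for every realization of $Z$,
\begin{eqnarray*}
\dist(X,Y) & = & \sum_{i=1}^n\psi_i\,|(\Delta X)_i-(\Delta Y)_i| \ = \ \sqrt{1-\tau}\sum_{i=1}^n\psi_i\,|(\Delta x)_i-(\Delta y)_i| \ = \ \sqrt{1-\tau}\,\dist(x,y).
\end{eqnarray*}
Because this holds almost surely (indeed for every value of $Z$), the essential supremum of $\dist(X,Y)$ under this coupling is at most $\sqrt{1-\tau}\,\dist(x,y)$, and since $W_\infty$ is the infimum over couplings of that essential supremum, the claimed bound follows immediately.

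I do not expect any real obstacle here: the key point is just that the Ornstein--Uhlenbeck update is an affine map in $x$ with a linear part $\sqrt{1-\tau}\,\mathrm{Id}$ and a noise term independent of $x$, and that the metric $\dist$ is defined through a \emph{linear} functional of $x$ (namely $x\mapsto\Delta x$ followed by a weighted $\ell^1$ norm), so the synchronous coupling rigidly rescales distances by $\sqrt{1-\tau}$. The only thing to note in passing is that the same computation applies verbatim to $P_\tau^\star$, since $P_\tau$ is self-adjoint (as observed from the symmetry of the kernel $p_\tau$), so $P_\tau^\star=P_\tau$ and the second inequality required in item (3) of Remark \ref{rk:regul} holds with the same constant $1+\epsilon_\tau$ where $\epsilon_\tau=\sqrt{1-\tau}-1\le 0$ (in fact with constant $\sqrt{1-\tau}\le 1$). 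Together with the continuity and regularity already checked, this completes the verification that $(P_\tau)_{0<\tau<1}$ satisfies all three requirements of Remark \ref{rk:regul}, and hence that Theorem \ref{th:main} applies to the operators $(P_\tau T_A)_{A\subset[n]}$, yielding Theorem \ref{th:gauss} in the limit $\tau\to0$.
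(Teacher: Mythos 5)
Your proof is correct and coincides with the paper's own argument: both use the synchronous coupling $X=\sqrt{1-\tau}\,x+\sqrt{\tau}\,Z$, $Y=\sqrt{1-\tau}\,y+\sqrt{\tau}\,Z$ and the linearity of $x\mapsto\Delta x$ to conclude that $\dist(X,Y)=\sqrt{1-\tau}\,\dist(x,y)$ surely. The additional remark about $P_\tau^\star=P_\tau$ is a correct and harmless bonus.
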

\begin{proof}With $Z\sim{\rm P}$, we deduce from the definition of $P_\tau$ that the random vectors  
\begin{equation*}
\left\{
\begin{array}{rlc}
X & := & \sqrt{1-\tau}x+\sqrt{\tau}Z\\
Y & := & \sqrt{1-\tau}y+\sqrt{\tau}Z.
\end{array}
\right.
\end{equation*}
form a coupling of $P_\tau(x,\cdot)$ and $P_\tau(y,\cdot)$. Therefore,
\begin{eqnarray*}
\dist(X,Y) & = & \sqrt{1-\tau}\;\sum_{i=1}^n\psi_i\, |[\Delta(x-y)]_i|=\sqrt{1-\tau}\,\dist(x,y) \,,
\end{eqnarray*}
which implies the desired property.
\end{proof}

\begin{remark}\label{rem:sigma}
If instead of the gaussian free field with covariance $\Gamma=({\rm Id} - P)^{-1}$ as above we consider a Gaussian measure \eqref{eq:dgff}
with an arbitrary positive definite $\Gamma$, then a minor modification of our argument yields the following estimate. 
Let $\Sigma$ be the matrix
\begin{eqnarray}
\label{def:S}
\Sigma & := &  \sum_{A\subset[n]}\theta_A  \Gamma^{1/2}M^\top_A\Gamma^{-1}M_A\Gamma^{1/2},
\end{eqnarray}
where $M_A$ is the matrix defined in \eqref{eq:defM}, and call $\sigma = \sigma(\Gamma,\theta)$ the smallest eigenvalue of $\Sigma$. 
Then, taking the metric 
\begin{eqnarray}
\label{def:dist}
\dist(x,y) & := & \sqrt{(x-y)^\top \Gamma^{-1}(x-y)},
\end{eqnarray}
and adapting our previous argument to this case, one finds $W_\infty\left(T_A(x,\cdot),T_A(y,\cdot)\right) \le  \dist(x,y)$, for all $A\subset[n]$, and 
 \begin{eqnarray*}
\sum_{A\subseteq[n]}\theta_A W^2_\infty\left(T_A(x,\cdot),T_A(y,\cdot)\right) & \le & (1-\sigma) \dist^2(x,y).
\end{eqnarray*}
From the crude bound $W_1\le W_\infty$ and Cauchy-Schwarz, one has that Assumptions (i) and (ii) of Theorem \ref{th:main} are satisfied with $\kappa= 1-\sqrt{1-\sigma}$. Moreover, it is not difficult to check, using a linear test function in \eqref{eq:GBVT}, that $\kappa\le \lambda \le \sigma$. In conclusion, the above argument yields the inequality
\begin{eqnarray}
\label{def:eqkala}
1-\sqrt{1-\sigma}\;\le\;\kappa\;\le\;\sigma\,.
\end{eqnarray}
Since $1-\sqrt{1-\sigma}\ge \sigma/2$ this estimate is sharp up to a factor at most $2$. As mentioned in Remark \ref{rem:gaussian}, it should be possible to improve this argument to obtain the sharp result $\kappa=\sigma$ for all positive definite $\Gamma$ and all probability vector $\theta$. The identity $\kappa=\sigma$ could be in fact obtained via Theorem \ref{th:dual} using  known B-L inequalities for Gaussian measures, see \cite[Theorem 3]{courtade2024rigid}.
\end{remark}

  \subsection{Permutations: proof of Theorem \ref{th:permutations}}
Here 
$\PP$ is the uniform measure over $\Omega= S_n$, the set of permutations of $[n]$, see \eqref{eq:perm0}. A permutation $\sigma\in\Omega$ is identified with the configuration $(\sigma_i)_{i\in[n]}$. 
To prove Theorem \ref{th:permutations}, we are going to apply Theorem \ref{th:main} with Markov operators 
  given by the conditional expectations $T_A\colon L^2\to L^2$, $A\subset[n]$, defined by
\begin{eqnarray}
T_A f & := &  \EE_A[f] = \EE[f|\sigma_j,j\in \bA],
\end{eqnarray}
where, as usual,  $\bA=[n]\setminus A$. 
We equip $\Omega=S_n$ with the \emph{transposition distance} $\dist(\sigma,\eta)$, $\sigma,\eta\in\Omega$, which is the minimal number of transpositions needed to turn $\sigma$ into $\eta$. 
Fix a pair of permutations $(\sigma,\eta)$ and let $\sigma'$ and $\eta'$ denote the random permutations with distribution $T_A(\sigma,\cdot)$ and $T_A(\eta,\cdot)$. A coupling of $(\sigma',\eta')$ can be obtained by simply replacing the pair $(\sigma,\eta)$ by the pair $(\sigma',\eta')=(\sigma\pi,\eta\pi)$ where $\pi$ is a uniformly random permutation of the labels $\{j\in[n]: j\in A\}$. This produces a valid coupling with the property that $ \dist(\sigma',\eta')=\dist(\sigma,\eta)$. Thus, hypothesis (i) of Theorem \ref{th:main} is satisfied with $\ell\equiv 1$. 

To check the positive curvature, we use a finer coupling. We first observe that 
the metric $\dist(\sigma,\eta)$ is generated by pairs of configurations that differ by exactly one swap, and thus using a telescopic decomposition, using the  so-called \emph{Gluing Lemma} (see, e.g., \cite[Lemma 7.6]{MR1964483}), we may restrict to a pair $(\sigma,\eta)$ such that $\sigma_\ell=\eta_\ell$, for all $\ell\neq i,j$, and $(\sigma_i,\sigma_j)=(\eta_j,\eta_i)$, for a fixed pair of distinct indexes $i,j\in[n]$. This reduction step is also known as the \emph{path coupling} lemma \cite{646111}. Given such a pair $(\sigma,\eta)$, the coupling of $(\sigma',\eta')$ is described as follows. Let $\pi$ be a uniformly random permutation of the labels $\{j\in[n]: j\in A\}$. If either $i$ or $j$ (or both) are in $\bA$ then we update as before by setting $(\sigma',\eta')=(\sigma\pi,\eta\pi)$. If instead 
$i,j$ are both in $A$, then we ensure coalescence by replacing $(\sigma,\eta)$ with $(\sigma\pi,\sigma\pi)$. This shows that 
\begin{align}
\sum_{A\subset [n]}\theta_A \,W_1\left(T_A(\sigma,\cdot),T_A(\eta,\cdot)\right) \le  
\sum_{A\subset [n]}\theta_A \,{\bf 1}_{\{A\not\supset\{i,j\}\}}\le 1-\theta_{\star\star},
\end{align}
where $\theta_{\star\star}:=\min_{1\le i<j\le n}\sum_{ A\supset\{i,j\}}\theta_A$. Thus, we have checked the validity of hypothesis (ii) of Theorem \ref{th:main} with $\kappa=\theta_{\star\star}$. This ends the proof of Theorem \ref{th:permutations}. 

\subsection{Contractive coupling for $\ell^n_p$--spheres}\label{sec:lp}
Here we present a
 contractive coupling towards the proof of Conjecture \ref{claim}.
 As in \eqref{eq:lp}, 
for any $p>0$, 
we take $\Omega=\bbS^{n-1}_p$, with the associated Borel $\sigma$-algebra, and write $\PP$ for the cone measure, i.e.\ the probability measure obtained as in \eqref{eq:gabe}. As usual, we write $Z:\Omega\mapsto\Omega$ for the identity map and take 
$T_Af := E[f|Z_{A^c}]$ for the conditional expectation of $f\in L^1$ given $Z_{A^c}$. We introduce the distance on $\Omega$ given by 
\begin{align}
\label{eq:distome}
\dist(x,y)= 
 \sum_{i=1}^n||x_i|^p - |y_i|^p| +  \sum_{i=1}^n{\bf 1}_{x_iy_i<0}  \,.
\end{align}
Under this metric, $T_A$ has nonnegative sectional curvature. More precisely,
\begin{lemma}\label{lem:Lemmad} 
For all $p>0$ for all $ x, y\in\Omega$,  and all $A\subset [n]$,  
\[
W_\infty(T_A(x,\cdot),T_A(y,\cdot)) = \sum_{i\in A^c}||x_i|^p -|y_i|^p|+\sum_{i\in A^c}{\bf 1}_{\{x_iy_i<0\}} +\left| \sum_{i\in A}(|x_i|^p-|y_i|^p)\right| \,.
\]
In particular, $W_\infty(T_A(x, \cdot), T_A(y, \cdot)) \le \dist(x, y)$. Moreover, the same identity applies to $W_1$.
\end{lemma}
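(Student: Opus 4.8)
The plan is to first pin down the conditional law $T_A(x,\cdot)$ explicitly, then read off an optimal \emph{synchronous} coupling from it, and finally check that no coupling can do better. \emph{Step 1 (the conditional law).} Using the representation \eqref{eq:gabe}, write $Z_i=\varepsilon_iV_i^{1/p}$ with $\varepsilon_i=\mathrm{sgn}(Z_i)$ and $V_i=|Z_i|^p$. Since each $G_i$ is symmetric with $|G_i|^p\sim\mathrm{Gamma}(1/p)$, the signs $(\varepsilon_i)$ are i.i.d.\ uniform on $\{\pm1\}$ and independent of $V=(V_i)$, which by the Gamma--Dirichlet correspondence is $\mathrm{Dirichlet}(1/p,\dots,1/p)$ on $\{t\in\R_+^n:\sum_it_i=1\}$; this is precisely the independence of $X$ and $S_p(G)$ underlying the construction of $\PP$. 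Conditioning on $Z_{A^c}=x_{A^c}$ is conditioning on $\varepsilon_i=\mathrm{sgn}(x_i)$ and $V_i=|x_i|^p$ for $i\in A^c$, so by independence of the signs together with the aggregation property of the Dirichlet law, under $T_A(x,\cdot)$ the signs $(\varepsilon_i)_{i\in A}$ stay i.i.d.\ uniform and $(V_i)_{i\in A}=s_x\,D$, where $s_x:=\sum_{i\in A}|x_i|^p=1-\sum_{i\in A^c}|x_i|^p$ and $D\sim\mathrm{Dirichlet}(1/p,\dots,1/p)$ on $\R_+^A$, all independent (for $|A|=1$ this just says $V_i=s_x$ deterministically).

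\emph{Step 2 (synchronous coupling, upper bound).} Given $x,y\in\Omega$, couple $T_A(x,\cdot)$ and $T_A(y,\cdot)$ by using the \emph{same} vector $D$ and the \emph{same} signs $(\varepsilon_i)_{i\in A}$ in the descriptions from Step 1. Then $Z'_i=x_i$, $Z''_i=y_i$ for $i\in A^c$, while for $i\in A$ one has $|Z'_i|^p=s_xD_i$, $|Z''_i|^p=s_yD_i$ and $\mathrm{sgn}(Z'_i)=\mathrm{sgn}(Z''_i)$, so
\[
\dist(Z',Z'')=\sum_{i\in A^c}\big||x_i|^p-|y_i|^p\big|+\sum_{i\in A^c}{\bf 1}_{\{x_iy_i<0\}}+|s_x-s_y|\sum_{i\in A}D_i .
\]
Since $\sum_{i\in A}D_i=1$ and $s_x-s_y=\sum_{i\in A}(|x_i|^p-|y_i|^p)$, the right-hand side is \emph{deterministic} and equals the claimed quantity, which therefore bounds $W_\infty$, and a fortiori $W_1$, from above.

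\emph{Step 3 (matching lower bound and the final inequality).} For an arbitrary coupling $(Z',Z'')$ of $T_A(x,\cdot)$ and $T_A(y,\cdot)$ the $A^c$-coordinates are deterministic, contributing exactly $\sum_{i\in A^c}||x_i|^p-|y_i|^p|+\sum_{i\in A^c}{\bf 1}_{\{x_iy_i<0\}}$; on $A$ one has $\sum_{i\in A}|Z'_i|^p=s_x$ and $\sum_{i\in A}|Z''_i|^p=s_y$ almost surely, so $\sum_{i\in A}||Z'_i|^p-|Z''_i|^p|\ge|s_x-s_y|$ by the triangle inequality, while the sign terms on $A$ are nonnegative. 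Hence $\dist(Z',Z'')$ is, a.s.\ and for \emph{every} coupling, at least the claimed quantity, which gives the reverse bound for $W_1$ (hence for $W_\infty$). Combining with Step 2 yields the stated identity for both $W_\infty$ and $W_1$; and since $\dist(x,y)$ exceeds that quantity by $\big(\sum_{i\in A}||x_i|^p-|y_i|^p|-|s_x-s_y|\big)+\sum_{i\in A}{\bf 1}_{\{x_iy_i<0\}}\ge0$, again by the triangle inequality, we also conclude $W_\infty(T_A(x,\cdot),T_A(y,\cdot))\le\dist(x,y)$.

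The only genuinely delicate point is Step 1: making rigorous the conditioning on the measure-zero event $\{Z_{A^c}=x_{A^c}\}$, i.e.\ exhibiting a regular version of $T_A(x,\cdot)$ valid for every $x$. This is exactly where the special structure of the cone measure (the Gamma/Dirichlet representation and the independence of $X$ from $S_p(G)$) is indispensable, and it is what makes the tailor-made metric \eqref{eq:distome} work; everything after Step 1 is elementary. As already noted in the discussion preceding Conjecture \ref{claim}, unlike in the Gaussian case no diffusion regularization as in Remark \ref{rk:regul} is available here, so this lemma, though it verifies Assumptions (i)--(ii) of Theorem \ref{th:main} with $\kappa=\theta_{\star\star}$, does not by itself close the proof of the conjecture.
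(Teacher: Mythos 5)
Your proof is correct and follows essentially the same route as the paper: a universal lower bound on $\dist(X,Y)$ valid for every coupling (via norm conservation on $A$ and the triangle inequality), matched by the coupling in which the $A$-coordinates of the two copies are positive multiples of a common resampled vector. Your Step 1 merely makes explicit, via the Gamma--Dirichlet correspondence, the ``resampling property'' that the paper invokes without elaboration to justify that this synchronous coupling is valid.
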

\begin{proof}
Fix $x, y \in\Omega$ and consider any coupling $(X, Y )$ of $T_A(x, \cdot)$ and $T_A(y, \cdot)$. Since only the coordinates in $A$ are modified, we have almost-surely
\begin{align*}
\dist(X,Y) &= \sum_{i\in A^c}||x_i|^p -|y_i|^p|+\sum_{i\in A^c}{\bf 1}_{\{x_iy_i<0\}} + \sum_{i\in A}||X_i|^p-|Y_i|^p| 
+\sum_{i\in A^c}{\bf 1}_{\{x_iy_i<0\}}\\
& \ge \sum_{i\in A^c}||x_i|^p -|y_i|^p|+\sum_{i\in A^c}{\bf 1}_{\{x_iy_i<0\}} + \left|\sum_{i\in A}(|X_i|^p-|Y_i|^p)\right| 
\\
& =\sum_{i\in A^c}||x_i|^p -|y_i|^p|+\sum_{i\in A^c}{\bf 1}_{\{x_iy_i<0\}} + \left|\sum_{i\in A}(|x_i|^p-|y_i|^p)\right|\,,
\end{align*}
where the last line uses the fact that the norm is conserved $\|X_A\|_p = \|x_A\|_p$ and $\|Y_A\|_p = \|y_A\|_p$.
Now, the middle inequality is an equality as soon as $X_A$ is a positive multiple of $Y_A$. The latter fact is  almost-surely the case under the following coupling:
\[
(X_i,Y_i) = \begin{cases}
\left(\tfrac{\|x_A\|_p}{\|Z_A\|_p}\,Z_i\,,\,\tfrac{\|y_A\|_p}{\|Z_A\|_p}\,Z_i\right) & \;\text{ if }\,i\in A
\\
(x_i,y_i)& \;\text{ if }\, i\in A^c
\end{cases}
\]
where we recall that $Z$ has law $\PP$.
The fact that the above  is a valid coupling follows from the resampling property of the Dirichlet-type random variable with distribution given by \eqref{eq:gabe}. Thus we have found an optimal coupling, and the proof is complete.
\end{proof}
We now fix a probability vector $\theta=(\theta_A, A\subset [n])$, and we look for a constant $\kappa$  as
large as possible, such that for all $x, y \in\Omega$,
\begin{equation}\label{eq:kabe}
\sum_{A\subset[n]}\theta_AW_1 (T_A(x,\cdot), T_A(y,\cdot))\;\le\; (1-\kappa) \dist(x, y)\,. 
\end{equation}
The answer turns out to be remarkably explicit.
\begin{lemma}\label{lem:kabe}
The optimal constant in \eqref{eq:kabe} is exactly $\kappa=\theta_{\star\star}=\min_{i<j}\sum_{A\supset\{i,j\}}\theta_A$.
\end{lemma}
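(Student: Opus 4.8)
The plan is to prove the two inequalities $\kappa \le \theta_{\star\star}$ and $\kappa \ge \theta_{\star\star}$ separately, using the explicit formula for $W_1 = W_\infty$ from Lemma \ref{lem:Lemmad}. For the upper bound, it suffices to exhibit a single pair $(x,y)$ for which the ratio $\sum_A \theta_A W_1(T_A(x,\cdot),T_A(y,\cdot))/\dist(x,y)$ is at least $1-\theta_{\star\star}$. Fix indices $i<j$ achieving the minimum in $\theta_{\star\star}$, and take $x,y$ that agree in all coordinates except that $|x_i|^p, |x_j|^p$ and $|y_i|^p, |y_j|^p$ differ in a balanced way (e.g.\ transfer a small mass $\varepsilon$ of $p$-th power from coordinate $i$ to coordinate $j$), keeping all coordinates positive. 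Then $\dist(x,y) = 2\varepsilon$, and from the formula in Lemma \ref{lem:Lemmad}, for a block $A$ we get $W_1(T_A(x,\cdot),T_A(y,\cdot)) = 2\varepsilon$ if $A$ does not contain both $i$ and $j$ (the two discrepancies survive in $A^c$, or one survives and the in-block contribution vanishes), and $W_1 = 0$ if $A \supset \{i,j\}$ (the balanced perturbation means $\sum_{k\in A}(|x_k|^p - |y_k|^p) = 0$). Summing gives $\sum_A \theta_A W_1 = 2\varepsilon(1-\theta_{\star\star})$, so $\kappa \le \theta_{\star\star}$.

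For the lower bound we must show $\sum_A \theta_A W_1(T_A(x,\cdot),T_A(y,\cdot)) \le (1-\theta_{\star\star})\dist(x,y)$ for \emph{all} $x,y \in \Omega$. Write $a_k := |x_k|^p - |y_k|^p$ and $b_k := {\bf 1}_{\{x_ky_k<0\}}$, so that $\dist(x,y) = \sum_k |a_k| + \sum_k b_k$. By Lemma \ref{lem:Lemmad},
\begin{eqnarray*}
W_1(T_A(x,\cdot),T_A(y,\cdot)) & = & \sum_{k\in A^c}|a_k| + \sum_{k\in A^c}b_k + \Big|\sum_{k\in A}a_k\Big|.
\end{eqnarray*}
The difference $\dist(x,y) - W_1(T_A(x,\cdot),T_A(y,\cdot)) = \sum_{k\in A}|a_k| - |\sum_{k\in A}a_k| + \sum_{k\in A}b_k$ is a sum of non-negative terms. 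To extract a factor $\theta_{\star\star}\dist(x,y)$ out of $\sum_A \theta_A$ of this quantity, I will bound it below, for each pair $\{i,j\}\subset A$, using only the contributions of coordinates $i$ and $j$: specifically $\sum_{k\in A}|a_k| - |\sum_{k\in A}a_k| \ge |a_i| + |a_j| - |a_i + a_j|$ when restricting attention to those two coordinates is \emph{not} quite valid as stated, so instead the cleaner route is to show pointwise in each coordinate $k$ that $\sum_{A}\theta_A\big(\text{gain from }k\big) \ge \theta_{\star\star}(|a_k| + b_k)$. For the $b_k$ part this is immediate since $\sum_{A\ni k} \theta_A \ge \sum_{A\supset\{i,j\}}\theta_A \ge \theta_{\star\star}$ for any $j\ne k$. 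The $|a_k|$ part is the delicate one: the triangle-inequality slack $\sum_{k\in A}|a_k| - |\sum_{k\in A}a_k|$ does not decompose coordinatewise, so I expect to argue as follows — order the coordinates so that, say, the $a_k$ alternate in sign or group positives and negatives, and use that for any $A$ with $A\supset\{i,j\}$ where $a_i, a_j$ have opposite signs one has $\sum_{k\in A}|a_k| - |\sum_{k\in A}a_k| \ge 2\min(|a_i|,|a_j|)$; summing over the right family of pairs and weights recovers $\theta_{\star\star}\sum_k|a_k|$ by a matching/flow argument on the bipartite graph between positive and negative coordinates.

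The main obstacle is precisely this last combinatorial step: controlling $\sum_A \theta_A\big(\sum_{k\in A}|a_k| - |\sum_{k\in A}a_k|\big)$ from below by $\theta_{\star\star}\sum_k|a_k|$ in full generality, for arbitrary weights $\theta$ and arbitrary real vectors $a$. The natural approach is a telescoping/pairing argument: split $[n] = P \cup Q$ according to the sign of $a_k$, note $\sum_k |a_k| = \sum_{k\in P}a_k - \sum_{k\in Q}a_k =: 2S$ (after centering so $\sum a_k$ contributes), and build a fractional matching between $P$ and $Q$ of total mass $S$; for each matched pair $(i\in P, j\in Q)$ and each block $A \supset \{i,j\}$, the slack is at least $2\min(a_i, -a_j)$ restricted to that pair's allocated mass, and $\sum_{A\supset\{i,j\}}\theta_A \ge \theta_{\star\star}$. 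Assembling these gives the bound; the bookkeeping is routine once the matching is set up, and this mirrors exactly the structure already used in the proof of Theorem \ref{th:permutations} (where the path-coupling reduction plays the analogous role). I would present it by first doing the reduction to extremal configurations — pairs $(x,y)$ differing by an infinitesimal balanced transfer between two coordinates, exactly as in the upper-bound construction — which makes both the value $\theta_{\star\star}$ and its optimality transparent, and then remark that the general inequality follows by the convexity/linearity of both sides in the "transfer decomposition" of an arbitrary pair $(x,y)$.
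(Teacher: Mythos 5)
Your proposal is essentially correct and follows the same basic strategy as the paper: optimality via the same extremal pairs (a balanced transfer between two coordinates realizing $\theta_{\star\star}$; the paper uses the canonical basis vectors $e_i,e_j$, i.e.\ the full transfer), and validity via the explicit $W_1$ formula of Lemma \ref{lem:Lemmad} plus a coordinate-pairing argument. The step you flag as "the main obstacle" --- bounding $\sum_A\theta_A\bigl(\sum_{k\in A}|a_k|-|\sum_{k\in A}a_k|\bigr)$ from below by $\theta_{\star\star}\sum_k|a_k|$ --- is real but closes exactly along the lines you sketch, and the paper does it with a particularly clean choice of your fractional matching: it normalizes the positive and negative parts of $a$ into probability vectors $\mu,\nu$ with disjoint supports, observes that the per-block slack equals $2\min\{\mu(A),\nu(A)\}$ (times $\dist/2$), and then uses $\min\{u,v\}\ge uv$ for $u,v\in[0,1]$, so that $\sum_A\theta_A\min\{\mu(A),\nu(A)\}\ge\sum_{i\ne j}\mu_i\nu_j\sum_{A\supset\{i,j\}}\theta_A\ge\theta_{\star\star}$. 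In other words, the product plan $\pi_{ij}\propto\mu_i\nu_j$ is a perfectly good choice of your "fractional matching of total mass $S$", and no further combinatorics is needed. Your alternative presentation --- path-coupling all the way down to elementary two-coordinate transfers (using a flow decomposition of $a$, which keeps intermediate points on the sphere since each coordinate moves monotonically) --- is also valid and would make the matching step unnecessary; the paper instead uses path coupling only to separate the "same sign" case from the "sign flip" case and then does the same-sign case globally. One small bookkeeping point either way: you must also check the sign-flip pairs, which give the weaker constraint $1-\theta_\star$, and conclude with $\theta_{\star\star}\le\theta_\star$; you handle this correctly through the $b_k$ terms.
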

\begin{proof}
By the triangle inequality, and using the path coupling lemma \cite{646111}, the validity of the inequality  \eqref{eq:kabe}  can be checked separately on two complementary cases: the ``same sign" case where $x_iy_i \ge 0$ for all $i$, and the ``sign flip" case where $x, y$ differ at a single coordinate (its sign being the only difference). More precisely, the optimal constant that we are looking for is just the minimum of the two optimal constants obtained by restricting our attention to same-sign pairs, or to sign-flip pairs, respectively. The sign-flip case is actually easy: if $x, y$ differ at a single coordinate $i$, then $\dist(x, y) = 1$ and $W_1(T_A(x,\cdot), T_A(y,\cdot) = 1_{i\in A^c}$, so that
\[
\sum_{A\subset[n]}\theta_AW_1 (T_A(x,\cdot), T_A(y,\cdot))\;\le\; (1-\theta_\star) \dist(x, y)\,,
\]
with equality when $i$ realizes the minimum in the definition of $\theta_\star=\min_{i}\sum_{A\ni i}\theta_A$. Let us now turn to the ``same sign" case: given a pair $(x,y) \in\Omega^2$, with $x\neq y$ and $x_iy_i \ge 0$ for all $i$, we introduce two probability vectors $\mu,\nu$ on $[n]$, with disjoint supports, defined by
\[
\mu_i:=\frac{2(|x_i|^p -|y_i|^p)_+}{\sum_{i=1}^n||x_i|^p - |y_i|^p| }
\,,\qquad \nu_i:=\frac{2(|y_i|^p -|x_i|^p)_+}{\sum_{i=1}^n||x_i|^p - |y_i|^p| }
\,,
\]
where $(a)_+=\max\{a,0\}$ denotes the positive part of $a\in\R$. 
Using Lemma \ref{lem:Lemmad}, we then find
\begin{align*}
\frac{W_1 (T_A(x,\cdot), T_A(y,\cdot))}{\dist(x,y)} &= 1- \frac{\sum_{i\in A}||x_i|^p-|y_i|^p|}{\sum_{i=1}^n||x_i|^p - |y_i|^p| }
+ \frac{\left|\sum_{i\in A}(|x_i|^p-|y_i|^p)\right| 
}{\sum_{i=1}^n||x_i|^p - |y_i|^p| }
 \\ & = 1- \frac{\mu(A)+\nu(A)-|\mu(A)-\nu(A)|}{2}
\\ & = 1- \min\{\mu(A),\nu(A)\}\,.
\end{align*}
Consequently,
\begin{align*}
\frac{\sum_{A\subset[n]}\theta_AW_1 (T_A(x,\cdot), T_A(y,\cdot))}{\dist(x,y)} &\le 
1- \sum_{A\subset[n]}\theta_A\mu(A)\nu(A)\\
& = 1- \sum_{A\subset[n]}\theta_A\sum_{i\in A,j\in A}\mu_i\nu_i\le 1- \theta_{\star\star}.
\end{align*}
Moreover, the two inequalities appearing in this computation are both equalities if $x$ and $y$ are the $i$-th and $j$-th vectors of the canonical basis of $\R^n$, with $(i,j)$ being any pair that realizes the minimum in the definition of $\theta_{\star\star}$. Thus, the optimal constant in the ``same sign" optimization problem is precisely $\theta_{\star\star}$. To conclude, it remains to note the simple fact that $\theta_{\star\star}\le \theta_{\star}$.
\end{proof}
As we mentioned in Section \ref{sec:appl}, Lemma \ref{lem:kabe} would be sufficient to prove Conjecture \ref{claim} if we could solve the technical problem of replacing Assumption (iii) in Theorem \ref{th:main} by a suitable regularizing procedure as outlined in Remark \ref{rk:regul}. 

\subsection{Down-Up walk on uniform $n$-sets: proof of Theorem \ref{th:downup}}
\label{sec:downup}
To prove the theorem we apply our general result from Theorem \ref{th:main}. In fact, it will be sufficient to apply it for each $k$ separately, and then obtain \eqref{eq:downup} by summing over $k$. Given $X,Y\in\Omega$ we write $|X\cap Y|$ for the number of elements in $X\cap Y$, and define 
 \[
\dist(X,Y)= n - |X\cap Y|\,,
\]
for the number of discrepancies in the two sets. Notice that this defines a distance in $\Omega$. Indeed, we can identify $X\in\Omega$ with the function $\eta_X:\bS\mapsto\{0,1\}$ such that $\eta_X(x) = {\bf 1}_{x\in X}$ and in this representation one has
$\dist(X,Y)= \frac12\sum_{x\in\bS}{\bf 1}_{\eta_X(x) \neq \eta_{Y}(x) }$. Therefore, Theorem \ref{th:downup} is an immediate consequence of Theorem \ref{th:main} and the following lemma.

\begin{lemma}\label{lem:partialperm}
For any $k=1,\dots,n$, for any $X,Y\in\Omega$, 
$W_\infty(T_k(X,\cdot),T_k(Y,\cdot))\leq \dist(X,Y)$, and 
\begin{eqnarray}
\label{eq:parperm}
W_1(T_k(X,\cdot),T_k(Y,\cdot))
\leq \left(1- \frac{k}{n}\right)\left(1- \frac{k}{N-(n-k)}\right)\dist(X,Y)\,.
\end{eqnarray}
\end{lemma}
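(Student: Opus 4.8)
The plan is to construct an explicit coupling of the two resampling steps $T_k(X,\cdot)$ and $T_k(Y,\cdot)$ and bound the expected distance after the step. Recall that the step consists of two sub-steps: a \emph{down} move in which $k$ elements are removed uniformly from the current $n$-set, followed by an \emph{up} move in which $k$ elements are added uniformly from the complement of the resulting $(n-k)$-set. By the path-coupling reduction (as in the proof of Theorem~\ref{th:permutations}, using \cite[Lemma 7.6]{MR1964483}, \cite{646111}), it suffices to treat a pair $(X,Y)$ with $\dist(X,Y)=1$, i.e.\ $Y = (X\setminus\{a\})\cup\{b\}$ for some $a\in X$, $b\notin X$; the general bound then follows by telescoping, since both claimed bounds are of the form $W\le c\,\dist$ with a dimension-independent constant $c$.

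First I would handle the $W_\infty$ bound: for a pair at distance $1$, couple the down move by removing the \emph{same} random $(n-k)$-subset of $X\cap Y$ whenever $a$ is among the removed elements of $X$ (equivalently $b$ among the removed of $Y$), in which case the two $(n-k)$-sets coincide and the walkers coalesce; otherwise the down sets are $S_X = S\cup\{a\}$ and $S_Y = S\cup\{b\}$ for a common $(n-k-1)$-set $S$, still at distance $1$, and then couple the up move by adding the same random $k$-subset of $\bS\setminus(S\cup\{a,b\})$ when possible, only breaking symmetry on the at-most-one discrepancy. In every case the coupled distance stays $\le 1$, giving $W_\infty(T_k(X,\cdot),T_k(Y,\cdot))\le\dist(X,Y)$.

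For the quantitative $W_1$ bound \eqref{eq:parperm}, I would compute, under the coupling just described with $\dist(X,Y)=1$, the probability that the walkers coalesce, i.e.\ that the post-step distance is $0$. Coalescence at the down move happens exactly when $a\in X$ is selected among the $k$ removed elements, which has probability $\tfrac{k}{n}$. Conditionally on \emph{not} coalescing at the down step, we are left with down-sets $S_X,S_Y$ differing in a single element $\{a\}$ versus $\{b\}$, both avoiding the other; the up move draws a uniform $k$-subset of a set of size $N-(n-k)$ containing both $a$ and $b$, and we can couple the two up-draws to coalesce precisely when the single discrepancy element is picked, which occurs with probability $\tfrac{k}{N-(n-k)}$. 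Hence the probability of \emph{not} coalescing is $\bigl(1-\tfrac{k}{n}\bigr)\bigl(1-\tfrac{k}{N-(n-k)}\bigr)$, and since in the non-coalescing event the distance is still $1 = \dist(X,Y)$, we get exactly \eqref{eq:parperm}. Multiplying by $\theta_k$, summing over $k$, and invoking Theorem~\ref{th:main} with $\ell_k\equiv 1$ and $\kappa = \sum_k\theta_k\bigl[\tfrac{k}{n} + \tfrac{k}{N-(n-k)}(1-\tfrac{k}{n})\bigr] = \kappa_0(\theta)+\kappa_1(\theta)$ yields \eqref{eq:downup}.

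The main obstacle is verifying that the up-move coupling in the non-coalescing case can indeed be realized so that the two $k$-subsets agree except on the single discrepancy coordinate: one must check that conditioning out the down-coalescence event does not distort the uniform law of the up-draw, and that the maximal coupling of two draws of $k$-subsets from sets differing in one element achieves coalescence probability exactly $\tfrac{k}{N-(n-k)}$ rather than merely at least that. This is a routine but slightly delicate bookkeeping exercise about hypergeometric-type couplings; once it is in place, the rest is arithmetic.
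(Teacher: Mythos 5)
Your proposal is correct and follows essentially the same route as the paper: path-coupling reduction to a pair at distance $1$, coalescence at the down move with probability $k/n$ when the discrepancy element is removed, and, conditionally on surviving, coalescence at the up move with probability $k/(N-(n-k))$ when the discrepancy is drawn, with the swap $a\leftrightarrow b$ handling the remaining case so the distance never exceeds $1$. The "obstacle" you flag (that the up-draw law is not distorted and that the swap yields a valid coupling) is exactly the routine bijection check the paper dispatches with "it is not hard to check".
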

\begin{proof}
We want to estimate 
$\EE[\dist(X',Y')]$, where $(X',Y')$ is  
a suitable coupling of the laws $T_k(X,\cdot)$ and $T_k(Y,\cdot)$ respectively. By the path coupling lemma \cite{646111} we may restrict to the case  where $X,Y\in\Omega$ is an arbitrary pair such that $\dist(X,Y)=1$. Thus, there exist $x,y\in\bS$ such that $x\neq y$, $x\in X\cap Y^c$, $y\in Y\cap X^c$ and $X\cap Y=X\setminus\{x\}=Y\setminus\{y\}$. Let $X_{-k}$ denote a  uniformly random subset of $X$ with cardinality $n-k$. There are $\binom{n}{k}$ possible choices, $\binom{n-1}{k-1}$ of which  do not contain $x$.
If  $x\notin  X_{-k}$ then we may set $Y_{-k}=X_{-k}$ and thus define $(X',Y')=(X',X')$, where $X'$ is a  uniformly random supset of $X_{-k}$ with cardinality $n$. If instead $x\in  X_{-k}$, we may take $Y_{-k}=(X_{-k}\setminus\{x\})\cup\{y\}$ and we construct $(X',Y')$ as follows. Let $X'=X_{-k}\cup X^k$, where $X^k$ is a  uniformly random subset of $\bS\setminus X_{-k}$ with cardinality $k$. There are $\binom{N-(n-k)}{k}$ possible choices, $\binom{N-(n-k)-1}{k-1}$ of which contain $y$. If $y\notin X^k$, then we set $Y'=(X'\setminus\{x\})\cup\{y\}$. If instead $y\in X^k$, then we set $Y'=X'$. 

It is not hard to check that the above defines a valid coupling of $(X',Y')$, such that $\dist(X',Y')=\dist(X,Y)=1$ with probability 
\[
\PP(y\notin X^k) =\left(1- \frac{k}{n}\right)\left(1- \frac{k}{N-(n-k)}\right)\,,
\]
 while with the remaining probability one has $\dist(X',Y')=0$. In particular, it follows that 
 \[
 W_\infty(T_k(X,\cdot),T_k(Y,\cdot))\leq \dist(X,Y)\,,
 \] 
 and
\begin{eqnarray}
\EE[\dist(X',Y')]\leq \left(1- \frac{k}{n}\right)\left(1- \frac{k}{N-(n-k)}\right).
\end{eqnarray}
\end{proof}

\subsection{Langevin diffusion: proof of Theorem \ref{th:BE}}\label{sec:langevin}
The proof is an immediate application of Theorem \ref{th:main}. Given $x,y\in\R^n$, we may couple two Langevin diffusions $X$ and $Y$ starting from $X_0=x$ and $Y_0=y$ by using the same Brownian motion $B=(B_t)_{t\ge 0}$, so that 
\begin{eqnarray*}
\mathrm d\|X_t-Y_t\|^2 & =& -2\langle X_t-Y_t,\nabla V(X_t)-\nabla V(Y_t)\rangle\dd t\\
& \le & -2\rho \|X_t-
Y_t\|^2\dd t,
\end{eqnarray*}
where the second line uses the $\rho-$convexity  (\ref{assume:convex}). Thus, the Euclidean distance $t\mapsto \|X_t-Y_t\|$ decays at rate at least $\rho$, almost-surely. This shows that for all $t\ge 0$ and all $x,y\in\R^n$,
\begin{eqnarray}
W_\infty\left(P_t(x,\cdot),P_t(y,\cdot)\right) & \le & e^{-\rho t}\|x-y\|. 
\end{eqnarray}
We may now apply the case $M=1$ of Theorem \ref{th:main} to the operator $T=P_t$, for a fixed $t>0$. Since $P_t^\star=P_t$ and $W_1\le W_\infty$, Assumptions (i)-(ii) hold with $\ell=e^{-\rho t}$ and $\kappa=1-e^{-2\rho t}$. Moreover, the regularity property (iii) classically holds in this setting (see, e.g., \cite[Th.\ 4.7.2]{bakry2014analysis}). The desired conclusion readily follows.

\bibliographystyle{plain}
\bibliography{entropy}
\end{document}